\DeclareMathOperator{\PGL}{PGL}
\DeclareMathOperator{\diag}{diag}
\newtheorem*{theorem*}{Theorem}
\newtheorem*{question*}{Question}
\newcommand{\ba}{\mathbf{a}}
\newcommand{\bb}{\mathbf{b}}
\newcommand{\IOMAT}[1]{\left[
I \, \vert \, 0\right]_{#1}}
\newcommand{\OIMAT}[1]{\left[
0 \, \vert \, I\right]_{#1}}
\newtheorem{theorem}{Theorem}
\newtheorem{lemma}[theorem]{Lemma}
\newtheorem{question}{Question}
\newtheorem{definition}[theorem]{Definition}
\newtheorem{proposition}[theorem]{Proposition}
\newtheorem{corollary}[theorem]{Corollary}
\theoremstyle{remark}
\newtheorem{remark}{Remark}
\newtheorem{example}[theorem]{Example}
\Crefname{question}{Question}{Question}
\newcommand{\rank}{\textup{rank}}
\newcommand{\nul}{\textup{null}}
\newcommand{\RR}{\mathbb{R}}
\newcommand{\NN}{\mathbb{N}}
\newcommand{\PP}{\mathbb{P}}
\newcommand{\CC}{\mathbb{C}}
\newcommand{\ZZ}{\mathbb{Z}}
\newcommand{\SL}{\textup{SL}}
\newcommand{\M}{\mathcal{M}}
\newcommand{\X}{\mathcal{X}}
\newcommand{\Y}{\mathcal{Y}}
\newcommand{\Pcal}{\mathcal{P}}
\newcommand{\Qcal}{\mathcal{Q}}
\newcommand{\ones}{\mathbf{1}}
\newcommand{\twos}{\mathbf{2}}
\newcommand{\FE}[1]{\mathcal{FC}_{#1}}
\newcommand{\EPV}[1]{\mathcal{C}_{#1}}
\newcommand{\ORBIT}[2]{{\mathcal{#1}}^{#2}}
\title{A Computer Vision Problem in Flatland}
\author{Sameer Agarwal}
\address{Google Inc.}
\email{sameeragarwal@google.com}
\author{Erin Connelly}
\address{Department of Mathematics, University of Osnabr\"uck}\email{erin.connelly@uni-osnabrueck.de}
\author{Annalisa Crannell} \address{Department of Mathematics, Franklin \& Marshall College}
\email{annalisa.crannell@fandm.edu}
\author{Timothy Duff}
\address{Department of Mathematics University of Missouri}
\email{tduff@missouri.edu}
\author{Rekha R. Thomas} \address{Department of Mathematics, University of Washington}
\email{rrthomas@uw.edu}
\date{\today}
\begin{document}
\begin{abstract}
\vspace{-0.2cm}
    When is it possible to project two sets of labeled points lying in a pair of projective planes to the same image on a  projective line? We give a complete answer to this question and describe the loci of the projection centers that enable a common image. In particular, we find that there exists a solution to this problem if and only if these two sets are themselves images of a common pointset in projective space.
\end{abstract}
\maketitle
 \vspace{-0.35cm}
Imagine you are a robot equipped with a camera, exploring a new world. The images that your camera captures, depend on your position and orientation  as well as the scene that you are looking at. Generally speaking, if you are looking at two different scenes with your camera in two different locations, then the images will also be different, but this begs the question, when is this not the case? i.e., when are the  images of two different scenes from  two different cameras the same? For simplicity we will assume that the world being explored is two dimensional (also known as a flatland), and the images are one dimensional. In this idealized setting we consider the following question:

{\em Given two sets of $n$ labeled points $\{x_i\}$ and $\{y_i\}$
in the plane, when is there a pair of flatland cameras, the first camera imaging $\{x_i\}$ and the second camera imaging $\{y_i\}$,  such that they produce the same image? If such a pair of cameras exists, then describe the set of all such camera pairs.
}  

To answer this question, we first need to formalize it. 

Let $\PP^k$ denote the $k$-dimensional real projective space.
A point in $\PP^k$ may be represented by a nonzero $(k+1)$-vector, and two such vectors $x_1, x_2$ represent the same point if there exists a nonzero scalar $\lambda$ with $x_1 = \lambda x_2,$ in which case we write $x_1\sim x_2.$ 
Abusing notation slightly, we write $A$ for any $(k+1) \times (l+1)$ matrix representing a linear projection\footnote{The notation $\dashrightarrow $ indicates that $A$ is a \emph{rational map}---it may be undefined at certain points in its domain (namely, the points in the right nullspace of $A$).} $A : \PP^l \dashrightarrow \PP^k$. 
When $\rank (A) = k+1 = l,$ the \emph{center} of the projection $A$ is the unique point $a\in \PP^l$ in the right nullspace of $A$ (at which the projection is undefined). A {\em flatland camera} is a rank 2 linear projection from $\PP^2$ to $\PP^1$. If the camera pair $(A,B)$ projects $\mathcal{X}$ and $\mathcal{Y}$ to the same image in $\PP^1$, then so does the camera pair $(HA, HB)$, where $H \in \PGL(2)$. Multiplication by $H$ is equivalent to picking a coordinate system for the image line in $\PP^1$, it does not change the center of projection of the cameras. Put another way, unlike the camera matrix, the center is a projective invariant. 

We can now state our questions precisely.
\begin{question*}
Let $\mathcal{X} = \{x_1,\dots,x_n\}$ and $\mathcal{Y} = \{y_1, \dots, y_n\}$ be two sets of $n$ labeled points in $\PP^2$. 
\begin{enumerate}
    \item[I.] When are there linear projections $A:\PP^2\dashrightarrow \PP^1$ and $B:\PP^2 \dashrightarrow \PP^1$ such that $\forall i$, $Ax_i \sim By_i$? 
    \item[II.] If such a pair $(A,B)$ exists, with $A,B$ full rank, describe the locus of possible centers $(a,b)$.
\end{enumerate}
\end{question*}

Our first main result (\Cref{thm:twocam-existence-general}) provides a complete answer to Question I.
Surprisingly, the answer is that $\mathcal{X}$ and $\mathcal{Y}$ have the same image in $\PP^1$ if and only if they themselves are the images of the same point set in $\PP^3$. In fact, we will see that the question can be posed and answered for two sets of labeled points in any $\PP^k$ that have the same image in $\PP^{k-1}$. We will also see that if there are more than two sets of labeled points in $\PP^2$ that have the same image in $\PP^1$, then we cannot get a theorem as sharp as the one for two sets of labeled points.

Our second set of results characterize the loci of centers of projections (camera centers) for various values of $n$, answering Question II. 
Here, to obtain a clean answer, we must assume that the points in $\X$ and $\Y$ are sufficiently generic.
The following theorem summarizes our results.

\begin{theorem}[Loci Theorem]
    \label{thm:loci of centers}
For generic $\X $ and $\Y$ in $\PP^2$, the loci of centers $(a,b) \in \PP^2 \times \PP^2$ in Question II satisfy the following: 
\begin{itemize}
    \item 
     If $n< 4$, both $a$ and $b$ may be chosen arbitrarily in $\PP^2$ (introduction to \Cref{sec:n-4}).
    \item 
    If $n=4$, then one of the centers may be chosen arbitrarily, and this choice determines a plane conic on which the other center can be chosen arbitrarily (\Cref{thm:n=4 loci invariants,thm:main thm n=4}).
    \item 
    If $n=5$, then one of the centers may be chosen arbitrarily, and this choice determines the other center uniquely (\Cref{thm:n=5 loci invariants,thm:main thm n=5}). 
    \item 
    If $n=6$, $a$ and $b$ must lie on plane cubic curves $C_x$ and $C_y$; if $a\in C_x$ is chosen arbitrarily, then $b\in C_y$ is uniquely determined, and vice-versa (\Cref{thm:n=6 invariant theory,thm:main thm n=6}).
    \item 
    If $n=7$, there are at most three choices for the pair $(a,b)$ (\Cref{thm:n-7}).
    \item 
    If $n>7$, the locus of $(a,b)$ is empty (\Cref{cor:n=8 loci invariants}).
\end{itemize}
\end{theorem}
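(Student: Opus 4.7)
My plan is to handle the six ranges of $n$ in sequence, unified by the existence theorem \Cref{thm:twocam-existence-general}: a pair $(A,B)$ produces a common image iff $\X$ and $\Y$ are projections of a common point set $\mathcal{Z}\subset \PP^3$. Under this reformulation, a center pair $(a,b)\in\PP^2\times\PP^2$ corresponds to a joint camera line $\ell \subset \PP^3$ together with projection maps $P_X, P_Y:\PP^3\dashrightarrow\PP^2$ recovering $\X, \Y$ from $\mathcal{Z}$; I would begin by recording this correspondence, since the entire case analysis rests on it.

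Before any case work, I would perform a naive parameter count to fix intuition. The moduli of triples $(\mathcal{Z}, P_X, P_Y)$ modulo the $\PGL(4)$ action has dimension $3n + 11 + 11 - 15 = 3n + 7$, while the conditions $P_X\mathcal{Z}=\X$ and $P_Y\mathcal{Z}=\Y$ impose $4n$ equations, predicting generic fiber dimension $7-n$. This matches the statement: positive-dimensional for $n\le 6$, finite for $n=7$, empty for $n\ge 8$. For the easy cases I would then argue directly. If $n\le 3$, fix any full-rank $A$ with arbitrary center $a$; the images $Ax_i\in \PP^1$ can be hit by a rank-two $B$ with any prescribed center $b$, since $\PGL(2)$ acts simply $3$-transitively on $\PP^1$. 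The case $n\ge 8$ reduces to \Cref{cor:n=8 loci invariants}.

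The main content lies in $n\in\{4,5,6,7\}$. Here my approach is invariant-theoretic: given $A$ with center $a$, the tuple $\{Ax_i\}\subset \PP^1$ has $\PGL(2)$-invariants (cross-ratios of quadruples) which are explicit rational functions of $a$, and similarly for $(b,B,\Y)$; the images agree iff these invariants coincide, yielding polynomial equations in $(a,b)$. For each $n$ I would (i) set up these equations, (ii) identify a clean basis of invariants, and (iii) analyze the resulting variety in $\PP^2\times\PP^2$. A single cross-ratio equation should cut out a conic once $a$ is fixed for $n=4$; additional invariants force uniqueness for $n=5$; three independent invariants for $n=6$ yield the cubic curves $C_x, C_y$ together with a bijection between them; and a zero-dimensional system of degree at most three governs $n=7$. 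The hardest steps will be making the cubic structure for $n=6$ explicit and proving the bound of three solutions for $n=7$, which require careful resultant computations and non-degeneracy arguments under the genericity hypothesis; this is exactly where the dedicated theorems \Cref{thm:n=6 invariant theory,thm:main thm n=6,thm:n-7} do the substantive work, and I would cite them after establishing the common invariant-theoretic framework.
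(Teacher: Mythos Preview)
Your plan is essentially correct and mirrors the paper's overall structure: the Loci Theorem is a summary result whose substance lies in the individual theorems you cite, and both you and the paper defer the real work to those. Your invariant-theoretic framework for $n\in\{4,5,6\}$ is exactly what the paper develops in \Cref{sec:invariants}; Theorem~\ref{thm:pull back tool} formalizes precisely the ``cross-ratios of the image points as rational functions of the center'' mechanism you describe, and the $n<4$ argument via $3$-transitivity of $\PGL(2)$ is the paper's as well.

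The one genuine difference is your organizing narrative. You front-load the 3D-reconstruction equivalence of \Cref{thm:twocam-existence-general} and run the parameter count $3n+7-4n=7-n$ via moduli of triples $(\mathcal{Z},P_X,P_Y)$ modulo $\PGL(4)$. The paper instead introduces the camera centers variety $\EPV{n}\subset\PP^2_x\times\PP^2_y$ in \Cref{sec:epv} and obtains $\dim\EPV{n}=7-n$ directly from the fundamental-matrix description (\Cref{lemma:dim-En}), never invoking 3D reconstruction for Question~II. Both counts are valid and give the same dimension; the paper's route has the practical advantage that the fundamental-matrix parametrization is what actually drives the $n=7$ case. \Cref{thm:n-7} is not proved via cross-ratio invariants and resultants as you sketch, but by analyzing the pencil $L_{\X,\Y}=\langle F_1,F_2\rangle$ of candidate fundamental matrices: the condition $(s_1F_1+s_2F_2)a=0$ forces $\rank[F_1a\mid F_2a]\le 1$, whose three $2\times 2$ minors are conics meeting in exactly three points. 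Your invariant approach would reach the same three points, but the fundamental-matrix argument is shorter and more transparent there.
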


While the answer to Question I only requires elementary projective geometry and linear algebra, answering Question II calls upon tools from algebraic geometry and invariant theory. The answers themselves come in the 
form of explicit algebraic equations and synthetic geometric constructions that describe the loci of centers.

\begin{remark}\label{construction-remark}
It is worthwhile to clarify the nature of constructions involving
plane algebraic curves---specifically, conics and cubics---as they are key players in our story.
It is a consequence of elementary Galois theory that plane conics other than circles cannot be constructed using only a straightedge and compass.
On the other hand, if we are given five generic points on a conic, then we may construct as many additional points on the conic as desired using the classical \emph{Braikenridge-Maclaurin Theorem} (a converse to Pascal's Theorem, see~\cite[Thm.~9.22]{coxeter1987}.)
Similarly, if we are given a rational point on a rational conic, we may generate infinitely many more rational points using stereographic projection; conics with this property appear in our analysis of $n=7$ points.
For the cubics $C_x, C_y$ appearing in~\Cref{thm:loci of centers}, it is known from algebraic geometry that no rational parametrization exists.
Nevertheless, it is possible to test whether or not ten points lie on a common cubic with a straightedge and compass---see~\cite{TravesWehlau2024} for this recent result and many interesting related questions.
\end{remark}

\textbf{Organization of the paper.} 
In Section~\ref{sec:existence} we will answer Question I. The rest of the paper is devoted to answering Question II. 
In Section~\ref{sec:invariants} we introduce the necessary tools from invariant theory of points in $\PP^1$ and in Section~\ref{sec:epv} we introduce the {\em camera centers variety}. Sections~\ref{sec:n-4} - \ref{sec:7-points} answer Question II for each of $n=4,5,6,7$.

\textbf{Acknowledgments.} Crannell acknowledges a Fulbright award. Duff acknowledges support from NSF DMS-2103310, and Thomas from the Walker Family Professorship.

\section{Answer to Question I}\label{sec:existence}

We first motivate a more careful formulation of Question I.
If we take any scalar $\lambda $ and row vectors $\ba$ and $\bb$ such that $\ba^\top x_i$ and $\bb^\top y_i$ are nonzero for all $i,$ then
\[A=\begin{bmatrix}
    \ba^\top \\ \lambda \ba^\top 
\end{bmatrix}, \quad  
B=\begin{bmatrix} \bb^\top \\ \lambda \bb^\top \end{bmatrix}
\quad 
\Rightarrow
\quad 
Ax_i \sim By_i \sim \begin{bmatrix}1\\ \lambda\end{bmatrix}.
\] 
So, in a trivial sense, the answer to Question I is always ``yes." 
Thus, going forward we exclude rank-one projections.
For any $l\ge 2,$ we define a \emph{camera} to be a full rank linear projection; the main examples of interest to us are the classical \emph{pinhole camera} $\PP^3 \dashrightarrow \PP^2$ and the \emph{flatland camera} $\PP^2 \dashrightarrow \PP^1.$
Thus, we arrive at the following reformulation of Question I.

\begin{question}\label{question:q1}
Let $\mathcal{X} = \{x_1,\dots,x_n\}$ and $\mathcal{Y} = \{y_1, \dots, y_n\}$ be two sets of labeled points in $\PP^2$. When do there exist flatland cameras $A:\PP^2 \dashedrightarrow \PP^1$ and $B:\PP^2 \dashedrightarrow \PP^1$
such that $\forall i , \, Ax_i \sim By_i$?
\end{question}

\Cref{thm:twocam-existence-general} below, in the special case $l=2,$ provides a complete answer to~\Cref{question:q1}.

\begin{theorem}
\label{thm:twocam-existence-general}
    Fix $l\ge 2,$ and let $\mathcal{X} = \{x_1,\dots,x_n\}$ and $\mathcal{Y} = \{y_1, \dots, y_n\}$ be two sets of $n$ labeled points in $\PP^l$. There exist cameras $A:\PP^l\dashedrightarrow \PP^{l-1}$ and $B:\PP^l \dashedrightarrow \PP^{l-1}$ such that $\forall i,\ Ax_i \sim By_i$ if and only if there exist cameras $A':\PP^{l+1}\dashedrightarrow \PP^l$ and $B':\PP^{l+1} \dashedrightarrow \PP^l$ with centers $a',b'$, and a set of points $\mathcal{Z} = \{z_1,\dots,z_n\}\subset  \PP^{l+1}$, none lying on the line connecting  $a', b',$ such that
    \[
    \forall i, \, x_i \sim A'z_i \text{ and } y_i \sim B'z_i.
    \]
\end{theorem}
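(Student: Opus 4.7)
The plan is to prove each direction separately, with the argument organized around projections from a line and the kernel of a certain block matrix.

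For the backward direction ($\Leftarrow$), I would start with $A', B'$ and $\{z_i\}$ as hypothesized, and use projection from the line $L := \overline{a'b'} \subset \PP^{l+1}$, which yields a rank-$l$ rational map $\pi_L : \PP^{l+1} \dashrightarrow \PP^{l-1}$. This map factors through both $A'$ and $B'$: because $a' \in L$ is the center of $A'$, the entire line $L$ collapses to the single point $A'(b') \in \PP^l$ under $A'$, and symmetrically for $B'$. I would then define $A := \pi_{A'(b')}$ and $B := \pi_{B'(a')}$, which are rank-$l$ linear maps $\PP^l \dashrightarrow \PP^{l-1}$, hence cameras. The factorization gives $A \circ A' = \pi_L = B \circ B'$, so evaluating at $z_i$ yields $A x_i \sim \pi_L(z_i) \sim B y_i$. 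The hypothesis $z_i \notin L$ ensures all the quantities in sight are defined.

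For the forward direction ($\Rightarrow$), the construction I would use is as follows. Given $A, B$ with $A x_i \sim B y_i$, pick homogeneous representatives so that $A x_i = \mu_i B y_i$ for scalars $\mu_i$, and form the $l \times (2l+2)$ block matrix $M := [A \mid -B]$. Let $N := \ker M \subseteq \CC^{2l+2}$; this has dimension $l+2$ since $M$ has rank $l$, and I would identify $\PP(N) \cong \PP^{l+1}$. The two coordinate-block maps $(u,v) \mapsto u$ and $(u,v) \mapsto v$ restrict to linear maps $A', B' : \PP(N) \dashrightarrow \PP^l$. To verify that these are full-rank cameras, I would check that $(0,v) \in N$ forces $v \in \ker B$, so $\ker A'|_N = \{0\} \times \ker B$ is one-dimensional, and the center of $A'$ is $a' := (0, b)$ where $b$ spans $\ker B$; symmetrically, the center of $B'$ is $b' := (a, 0)$ where $a$ spans $\ker A$. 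By construction, $z_i := (x_i, \mu_i y_i) \in N$ and satisfies $A' z_i = x_i$ and $B' z_i \sim y_i$. The line $\overline{a'b'}$ consists exactly of points of the form $(sa, tb)$, so $z_i \in \overline{a'b'}$ would force $x_i \sim a$ and $y_i \sim b$, both of which are excluded since $A x_i$ and $B y_i$ are, by hypothesis, well-defined.

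The conceptual crux, and the step I expect to be the main obstacle, is recognizing that $\ker [A \mid -B]$ is the natural ambient space in which to build the lifts $z_i$. Once this ansatz is in place, the rank computations, the identification of the centers $a', b'$, and the verification that each $z_i$ avoids $\overline{a'b'}$ are all mechanical. A minor subtlety is that one must check that $a' \ne b'$ (so $\overline{a'b'}$ is well-defined): in the forward direction this is automatic because $(0,b)$ and $(a,0)$ span a two-dimensional subspace of $N$, and in the backward direction it is built into the hypothesis.
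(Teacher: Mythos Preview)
Your proof is correct and takes a genuinely different, coordinate-free route from the paper's. The paper first establishes the special case $A=\OIMAT{l}$, $B=\IOMAT{l}$, $A'=\IOMAT{l+1}$, $B'=\OIMAT{l+1}$ in a separate lemma, constructing the lift explicitly as $z=[x_0:\cdots:x_l:\lambda y_l]$, and then reduces the general statement to this lemma by conjugating with suitable homographies $S_1,S_2,T$. Your forward direction packages the same lift intrinsically as $z_i=(x_i,\mu_i y_i)\in\ker[A\mid -B]$, identifying $\PP^{l+1}$ with $\PP(\ker M)$ rather than choosing coordinates; your backward direction replaces the coordinate reduction with the observation that projection from the baseline $\overline{a'b'}$ factors simultaneously through $A'$ and $B'$. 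The paper's approach is more hands-on and perhaps easier to check symbol by symbol; yours exposes the geometry (the fiber product sitting inside $\ker M$, and the two-step projection $\PP^{l+1}\dashrightarrow\PP^l\dashrightarrow\PP^{l-1}$) without any bookkeeping of basis changes. One small point to tighten: when you write ``$A:=\pi_{A'(b')}$'' you should make explicit that $A$ is the \emph{unique} map satisfying $A\circ A'=\pi_L$ for your fixed choice of $\pi_L$, not an arbitrary projection with center $A'(b')$; otherwise $A\circ A'$ and $B\circ B'$ agree only up to a homography of $\PP^{l-1}$, which is harmless for the conclusion but makes the displayed equality $A\circ A'=\pi_L=B\circ B'$ literally false.
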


The triple $(\mathcal{Z},A',B')$ in~\Cref{thm:twocam-existence-general} is said to be a \emph{reconstruction} of $(\X , \Y )$: for $l=2,$ we say it is a \emph{3D reconstruction}. 
Graphically~\Cref{thm:twocam-existence-general} is represented by the following diagram.
\begin{center}
\begin{tikzcd}
& \PP^{l+1}(\mathcal{Z}) \arrow{dl}{}[swap]{A'} \arrow{dr}{B'} & \\
\PP^l(\mathcal{X}) \arrow{dr}{}[swap]{A} & & \PP^l(\mathcal{Y}) \arrow{dl}{B} \\
& \PP^{l-1} &
\end{tikzcd}
\end{center}
After giving a brief, purely algebraic proof of~\Cref{thm:twocam-existence-general}, we will shift our focus towards understanding the case where $l=2$ in greater detail, by exploring its geometric interpretation, connections to computer vision, and a generalization to the case of more than two projections in~\Cref{thm-many-cameras}.

For any $l\ge 2$, we let $\IOMAT{l} $ denote the 0-1 matrix representing the camera $\PP^l \dashrightarrow \PP^{l-1}$ defined by $[x_0: \cdots : x_l] \mapsto [x_0: \cdots : x_{l-1}].$
Similarly, $\OIMAT{l} $ will denote the 0-1 matrix representing the camera $[x_0: \cdots : x_l] \mapsto [x_1: \cdots : x_{l}].$

\begin{lemma}\label{lemma:diamond-special matrix}
    Fix $l\ge 2,$ let $\mathcal{X} = \{x_1,\dots,x_n\}, \mathcal{Y} = \{y_1, \dots, y_n\} \subset \PP^l$ be sets of labeled points, and set 
    \[
A=\OIMAT{l}, \,  B=\IOMAT{l},
\quad 
A'=\IOMAT{l+1}, \, B' = \OIMAT{l+1}.
    \]
We have $\forall i, \, Ax_i \sim By_i$ if and only if there exists a set of points $\mathcal{Z} = \{z_1,\dots,z_n\}\subset  \PP^{l+1}$, none lying on the line connecting the centers $a' = [0:0:\cdots : 1],$ $b'=[1:\cdots : 0 : 0] \in \PP^{l+1},$ such that 
    \[
    \forall i, \, x_i \sim A'z_i \text{ and } y_i \sim B'z_i.
    \]
\end{lemma}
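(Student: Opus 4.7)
The plan hinges on a single structural observation: for the specific 0--1 matrices given, the two compositions $A\circ A'$ and $B\circ B'$ agree. A direct block multiplication gives
\[
A\,A' \;=\; \OIMAT{l}\cdot \IOMAT{l+1} \;=\; \bigl[\,0\mid I_l\mid 0\,\bigr] \;=\; \IOMAT{l}\cdot \OIMAT{l+1} \;=\; B\,B',
\]
an $l\times(l+2)$ matrix whose right nullspace is spanned by the first and last standard basis vectors---precisely the homogeneous coordinates of $b'=[1:0:\cdots:0]$ and $a'=[0:\cdots:0:1]$. Thus the nullspace of $AA'=BB'$ is the line $\overline{a'b'}\subset \PP^{l+1}$. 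Once this identity is in hand, both implications reduce to short calculations.

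For the ($\Leftarrow$) direction, assume we are given $\mathcal{Z}$ with $x_i\sim A'z_i$ and $y_i\sim B'z_i$, and $z_i\notin \overline{a'b'}$. Then
\[
A\,x_i \;\sim\; A\,A'z_i \;=\; B\,B'z_i \;\sim\; B\,y_i.
\]
The hypothesis $z_i\notin\overline{a'b'}$ is exactly the statement that $z_i$ is not in the kernel of $AA'=BB'$, so both sides are nonzero vectors and therefore represent genuine points of $\PP^{l-1}$.

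For the ($\Rightarrow$) direction, I would construct the $z_i$ explicitly by concatenation. Given $Ax_i\sim By_i$ (both nonzero by the usual convention on $\sim$), rescale the chosen representative of $y_i$ so that $Ax_i=By_i$ holds as an equality of vectors in $\RR^l$. Write $x_i=(\alpha_i,w_i)\in \RR\oplus \RR^l$ and $y_i=(w_i,\beta_i)\in \RR^l\oplus \RR$, where $w_i=Ax_i=By_i\neq 0$ is the common middle block. Set
\[
z_i\;=\;(\alpha_i,\,w_i,\,\beta_i)\;\in\;\RR^{l+2}.
\]
A direct check gives $A'z_i=x_i$ and $B'z_i=y_i$. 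Since $w_i\neq 0$ the middle $l$ coordinates of $z_i$ are nonzero, which simultaneously guarantees that $z_i$ is a nonzero vector (hence a well-defined projective point) and that $z_i\notin\overline{a'b'}$ (since points on that line have zero middle block).

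There is no serious obstacle: the entire argument is index bookkeeping organized around the identity $AA'=BB'$. The only point that warrants care is matching the geometric condition ``$z_i$ does not lie on $\overline{a'b'}$'' to the algebraic condition ``the middle block of $z_i$ is nonzero''; this match is forced by the explicit form of $a'$ and $b'$ as the right nullspaces of $A'$ and $B'$, so it requires no extra work.
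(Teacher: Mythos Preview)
Your proof is correct and follows essentially the same approach as the paper: the paper's forward direction constructs $z=[x_0:x_1:\cdots:x_l:\lambda y_l]$, which is exactly your concatenation $(\alpha_i,w_i,\beta_i)$ after absorbing the scalar into $y$, and its converse computes $\OIMAT{l}x\sim\IOMAT{l}y\sim[z_1:\cdots:z_l]$, which is your identity $AA'=BB'=[\,0\mid I_l\mid 0\,]$ applied to $z$. Your explicit framing around the equality $AA'=BB'$ and its kernel being the line $\overline{a'b'}$ is a slightly cleaner way to package the same computation.
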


\begin{proof}
To simplify notation, fix a pair of points with the same label and their representatives in homogeneous coordinates, $x = [x_0:\cdots : x_l] \in \X$ and $y = [y_0 : \cdots : y_l] \in \Y $.
We first prove the forward direction. Suppose there exists a scalar $\lambda $ such that $
\OIMAT{l} x
=
\lambda  \IOMAT{l} y \ne 0.$
Consider the point 
\[
z = [
x_0:
x_1:
\cdots :
x_l:
\lambda y_l] =
[
x_0 : \lambda y_0 : \cdots : \lambda  y_{l-1} : \lambda  y_l]
\in \PP^{l+1}.
\]
Points on the line spanned by $a'$ and $b'$ are of the form $[\mu_1:0:\cdots:0:\mu_{l+1}]$.Consequently, $\OIMAT{l} x = [x_1:\cdots:x_l] \ne 0$ implies that $z$ does not lie on the line spanned by $a'$ and $b'$. Moreover,
\[
\IOMAT{l+1}z = 
x,
\quad \text{ and } \quad 
\OIMAT{l+1}z = \lambda y 
\sim 
y.
\]
Thus, if $A=\OIMAT{l}$ and $B=\OIMAT{l}$ project $\X $ and $\Y$ to the same image, then $z$, $A'=\IOMAT{l+1}$ and $B'=\OIMAT{l+1}$ provide the necessary reconstruction.
For the converse, note that if $z\in \PP^{l+1}$ satisfies both
$\IOMAT{l+1} z \sim x$ and $\OIMAT{l+1} z 
\sim y,$
then $\OIMAT{l} x \sim \IOMAT{l} y \sim [z_1: \cdots : z_{l}] \in \PP^{l-1}$.
\end{proof}

\begin{proof}[Proof of~\Cref{thm:twocam-existence-general}]
We reuse the structure of the previous proof, constructing $(A,B)$ from $(A',B')$ and vice-versa. 
Suppose first that $Ax \sim By.$
Choose invertible matrices $S_1, S_2, T$ so that
\[
S_1 A T = \OIMAT{l},
\quad 
S_2 B T = \IOMAT{l}.
\]
Note that $T$ can be any homography of $\PP^l$ with $T a = e_1$ and $T b = e_{l+1}.$ where $e_i$ denotes the $i$th standard unit vector in $\PP^l$. 
Then 
\[
\OIMAT{l} (T^{-1} x) = S_1^{-1} Ax \sim S_1^{-1} By 
= S_1^{-1} S_2^{-1} \IOMAT{l} (T^{-1} y)
= \IOMAT{l} (S T^{-1} y),\\
\]
where $S$ is the invertible matrix
\[
S = \left[\begin{array}{c|c}
S_1^{-1} S_2^{-1} &0\\
\hline 
0^\top &1
\end{array}\right].
\]
By~\Cref{lemma:diamond-special matrix}, $T^{-1} x$ and $S T^{-1}y$ admit a valid reconstruction via $\IOMAT{l+1}$ and $\OIMAT{l+1}.$
Thus $x$ and $y$ also admit a valid reconstruction by setting $A'=T \IOMAT{l+1}$ and $B'=TS^{-1} \OIMAT{l+1}.$

Conversely, suppose that
$A' z \sim x$ and $B' z \sim y.$
Choosing now invertible matrices $S_1,S_2, T$ so that 
\[
S_1 A' T = \IOMAT{l+1},
\quad 
S_2 B' T = \OIMAT{l+1},
\]
we have that 
\[
\IOMAT{l+1} (T^{-1} z) = S_1 A' z
\sim S_1 x,
\quad 
\text{and} 
\quad 
\OIMAT{l+1} (T^{-1} z) = S_2 B' z \sim S_2 y.
\]
Applying~\Cref{lemma:diamond-special matrix}, $S_1x$ and $S_2y$ project to the same image under $\OIMAT{l}$ and $\IOMAT{l}.$
Thus, $x$ and $y$ project to the same image under $A=\OIMAT{l} S_1$ and $B=\IOMAT{l} S_2.$
\end{proof}

Having proven~\Cref{thm:twocam-existence-general} for any $l\ge 2,$ we now focus on the special case $l=2$ and prove another criterion for the existence of projections $A$ and $B$.
\begin{theorem}
\label{thm:fundamental-equivalence}
    Let $\mathcal{X} = \{x_1,\dots,x_n\}$ and $\mathcal{Y} = \{y_1, \dots, y_n\}$ be two sets of labeled points in $\PP^2$. There exist flatland cameras $A:\PP^2\dashedrightarrow \PP^1$ and $B:\PP^2 \dashedrightarrow \PP^1$ with centers $a$ and $b$ such that $\forall i,\ Ax_i \sim By_i$ if and only if there exists a $3\times 3$ matrix $F$ of rank 2 such that 
    \[
    Fa = 
    F^T b = 0, 
    \, \, \text{and } \, \, 
    \forall i , \, \, 
    y_i^T F x_i =0,
    \quad 
    F x_i  \ne 0,
    \quad  
    F^T y_i \ne  0.
    \]
\end{theorem}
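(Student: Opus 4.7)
The plan is to adapt the classical fundamental-matrix construction of two-view geometry, translating it one dimension down from pinhole cameras $\PP^3 \dashrightarrow \PP^2$ to flatland cameras $\PP^2 \dashrightarrow \PP^1$. The key algebraic ingredient is that two nonzero vectors $u, v \in \RR^2$ represent the same point of $\PP^1$ if and only if $u^\top \epsilon v = 0$, where $\epsilon$ is a $2\times 2$ non-degenerate skew-symmetric matrix. This forces the ``flatland fundamental matrix'' to have the shape $F = B^\top \epsilon\, A$.

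\textbf{Forward direction.} Starting from flatland cameras $(A,B)$ with centers $a,b$ and $Ax_i \sim By_i$, I define the $3\times 3$ matrix $F := B^\top \epsilon\, A$. The identity $y_i^\top F x_i = (By_i)^\top \epsilon (A x_i) = 0$ is immediate from $A x_i \sim B y_i$; the center conditions $Fa=0$ and $F^\top b=0$ are immediate from $Aa=0$ and $Bb=0$; and $\rank F = 2$ follows from $\rank(A)=\rank(B)=\rank(\epsilon)=2$ together with a straightforward dimension count. Finally, $F x_i \ne 0$ and $F^\top y_i \ne 0$ because $A x_i, B y_i \ne 0$ (implicit in $A x_i \sim B y_i$) and the maps $B^\top \epsilon,\, A^\top \epsilon^\top : \RR^2 \to \RR^3$ are injective.

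\textbf{Reverse direction.} The task is to recover a factorization $F = B^\top \epsilon\, A$ from a rank-$2$ matrix $F$ satisfying the stated hypotheses. I would choose any rank-$2$ matrix $A \in \RR^{2 \times 3}$ whose right nullspace is spanned by $a$ (unique up to left $\GL(2)$-multiplication). The row space of $\epsilon\, A$ is then $a^\perp \subset \RR^3$, and the row space of $F$ is also $a^\perp$ because $\rank F = 2$ and $F a = 0$. So each row of $F$ is a unique combination of the two rows of $\epsilon\, A$, giving a unique rank-$2$ matrix $B^\top \in \RR^{3 \times 2}$ with $F = B^\top \epsilon\, A$. From $F^\top b = A^\top \epsilon^\top B b = 0$ and injectivity of $A^\top \epsilon^\top$ one gets $B b = 0$, so $b$ is the center of $B$. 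Finally, $y_i^\top F x_i = (B y_i)^\top \epsilon (A x_i) = 0$, combined with $A x_i, B y_i \ne 0$ (forced by $F x_i, F^\top y_i \ne 0$), yields $A x_i \sim B y_i$.

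\textbf{Main obstacle.} Nothing is conceptually deep, but the reverse direction requires careful nullspace/row-space bookkeeping. The crux is the observation that, because $F a = 0$ and $\rank F = 2$, the row space of $F$ is forced to equal $a^\perp$, matching the row space of $\epsilon\, A$ for any choice of $A$ with right nullspace $\langle a \rangle$. This compatibility is exactly what makes the factorization $F = B^\top \epsilon\, A$ available; once it is in hand, the remaining identities translate routinely between the $F$ and $(A, B, a, b)$ descriptions.
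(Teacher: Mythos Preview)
Your proof is correct and follows essentially the same approach as the paper: your formula $F = B^\top \epsilon\, A$ is exactly the paper's $F = \mathbf{b}_2 \mathbf{a}_1^\top - \mathbf{b}_1 \mathbf{a}_2^\top$ written with the skew form $\epsilon$ made explicit, and your reverse direction via row-space matching is a mild reorganization of the paper's rank-$2$ factorization $F = UV^\top$. The $\epsilon$-framing is arguably cleaner conceptually, but the underlying argument is the same.
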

\begin{proof}
We first establish the forward direction.
Starting from flatland cameras
\[
A = \begin{bmatrix}
    \mathbf{a}_1^\top\\
    \mathbf{a}_2^\top
\end{bmatrix}, \quad 
B = \begin{bmatrix}
    \mathbf{b}_1^\top\\
    \mathbf{b}_2^\top
\end{bmatrix},
\]
we define $F = \mathbf{b}_2 \mathbf{a}_1^T - \mathbf{b}_1 \mathbf{a}_2^T.$
Clearly $Fa=F^Tb =0.$
Moreover, for any $i,$
\[
y_i^T F x_i = 
\det \begin{bmatrix}
\mathbf{a}_1^T x_i & \mathbf{b}_1^T y_i \\
\mathbf{a}_2^T x_i & \mathbf{b}_2^T y_i
\end{bmatrix}
=
\det \begin{bmatrix}
    Ax_i & By_i 
\end{bmatrix}
=0,
\]
since $Ax_i \sim By_i.$
If $F$ were not rank $2,$ this would imply that $\mathbf{a}_1 \mathbf{b}_2^\top$ was a multiple of $\mathbf{a}_2 \mathbf{b}_1^\top$, or vice-versa; supposing for instance that $\mathbf{a}_1 \mathbf{b}_2^\top = \lambda \mathbf{a}_2 \mathbf{b}_1^\top$ for some scalar $\lambda ,$ we could write $\mathbf{a}_1 = \lambda (b_{1 i}/b_{2 i}) \mathbf{a}_2$ for some $i,$ contradicting the fact that $A$ is full rank.
Since $F$ is rank 2, the matrices $A$ and $F$ have the same nullspace, so $Ax_i \ne 0$ implies $F x_i \ne 0$; similarly, $\nul (B) = \nul (F^T)$ gives $F^T y_i \ne 0.$ 

For the reverse direction, note first that the rank-$2$ matrix $F$ can be factorized as 
\begin{align}
    F = UV^\top = \begin{bmatrix}
    u_1 & u_2 
\end{bmatrix} 
\begin{bmatrix}
    v_1^\top\\
    v_2^\top
\end{bmatrix}
\end{align}
where $U$ and $V$ are $3\times 2$ matrices of full rank. Setting
\begin{align}
A = \begin{bmatrix}
    -v_2^\top \\
    v_1^\top 
\end{bmatrix} \text{ and } B = \begin{bmatrix}
    u_1^\top \\
    u_2^\top
\end{bmatrix},  
\end{align}
we may readily verify $\nul (A) = \nul (F),$ $\nul (B) = \nul (F^T),$ and that $\forall i \, \, A x_i \sim B y_i.$
\end{proof}

The matrix $F$ appearing in~\Cref{thm:fundamental-equivalence} is known in computer vision as the \emph{fundamental matrix}~\cite{hartley2003multiple}. Combining~\Cref{thm:twocam-existence-general} with~\Cref{thm:fundamental-equivalence}, we recover a classical description of this matrix.
\begin{theorem}\label{thm:fundamental-facts}
\cite[Chapter 9]{hartley2003multiple} For a pair of pinhole cameras $A',B':\PP^3\dashedrightarrow\PP^2$ with centers $a',b' \in \PP^3,$  let $a=A' b'$ and $b=B'a'$. There exists a $3\times 3$ matrix $F$ of rank $2$, the \textbf{fundamental matrix} of the pair $(A',B')$, whose left and right nullspaces are spanned by $a$ and $b,$ respectively, and such that, for $x\neq a,y\neq b\in\PP^2$, $y^\top Fx=0$ if and only if there exists $z\in\PP^3$ with $A' z=x$ and $B'z=y$.
Furthermore, these properties determine $F$ up to a scalar multiple.
\end{theorem}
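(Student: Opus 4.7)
My plan is to derive~\Cref{thm:fundamental-facts} as a corollary of~\Cref{thm:twocam-existence-general} and~\Cref{thm:fundamental-equivalence}. The strategy is to convert the pinhole pair $(A', B')$ into a canonical pair of flatland cameras $(A, B)$ on $\PP^2$ by specializing the reverse direction of~\Cref{thm:twocam-existence-general} (with $l = 2$), and then to read off the fundamental matrix $F$ from~\Cref{thm:fundamental-equivalence}. Each property claimed of $F$ then reduces either to a property already verified for the flatland fundamental matrix or to a short argument in incidence geometry.

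In detail, I would pick a homography $T$ of $\PP^3$ with $T e_1 = b'$ and $T e_4 = a'$, together with invertible $3 \times 3$ matrices $S_1, S_2$ satisfying $S_1 A' T = \IOMAT{3}$ and $S_2 B' T = \OIMAT{3}$; these assignments on $T$ are forced, up to its remaining freedom, by the kernel conditions on the two sides. Setting $A := \OIMAT{2} S_1$ and $B := \IOMAT{2} S_2$, the argument in the proof of~\Cref{thm:twocam-existence-general} guarantees $A(A'z) \sim B(B'z)$ for every $z \in \PP^3$ off the baseline $\overline{a'b'}$. Applying $A'T = S_1^{-1}\IOMAT{3}$ to $e_1$ and $B'T = S_2^{-1}\OIMAT{3}$ to $e_4$ identifies the centers of the flatland cameras $A$ and $B$ as $S_1^{-1} e_1 = A' b' = a$ and $S_2^{-1} e_3 = B' a' = b$, respectively.

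Applying the forward direction of~\Cref{thm:fundamental-equivalence} to $(A,B)$ then produces a rank-$2$ matrix $F$ satisfying $Fa = F^\top b = 0$ and $y^\top F x = 0$ on every image correspondence of $(A,B)$. This establishes the ``if'' direction of the epipolar condition together with the fact that the two nullspaces of $F$ are spanned by the epipoles $a$ and $b$. For the converse, I would argue geometrically: for $x \ne a$, the fibre $(A')^{-1}(x)$ is a line $\ell_x \subset \PP^3$ through $a'$ distinct from the baseline, whose image $B'(\ell_x)$ is a genuine line in $\PP^2$ whose line coordinates are (a scalar multiple of) $Fx$; the equation $y^\top F x = 0$ therefore places $y$ on $B'(\ell_x)$, and the unique $z \in \ell_x$ projecting to $y$ under $B'$ is the desired pre-image.

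Finally, uniqueness of $F$ up to scalar follows from a dimension count: rank-$2$ matrices with prescribed one-dimensional left and right nullspaces form a one-parameter projective pencil, and any two of them that agree on a single generic correspondence $(x,y)$ (with $x \notin \ker F$) must be proportional. The step I expect to require the most care is the bookkeeping that identifies the centers of the constructed flatland cameras with the epipoles $a$ and $b$; once this is settled, every remaining claim is either supplied by the earlier theorems or reduces to the classical incidence description of epipolar lines.
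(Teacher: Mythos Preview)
Your approach is exactly what the paper does: it gives no standalone proof of this theorem, but simply remarks that it is recovered by combining \Cref{thm:twocam-existence-general} with \Cref{thm:fundamental-equivalence} and cites the standard reference. Your proposal carries out that combination in detail, and the bookkeeping identifying the centers of the flatland cameras $A,B$ with the epipoles $a=A'b'$ and $b=B'a'$ is correct (indeed $A'b' = S_1^{-1}\IOMAT{3}\,T^{-1}b' = S_1^{-1}\IOMAT{3}\,e_1 = S_1^{-1}e_1$, the center of $A=\OIMAT{2}S_1$, and symmetrically for $b$).

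There is, however, a genuine gap in your uniqueness argument. The $3\times 3$ matrices $F$ with $Fa=0$ and $F^\top b=0$ do \emph{not} form a projective pencil: working in coordinates where $a=b=e_3$, the two conditions force the third row and third column of $F$ to vanish, which is only five independent linear constraints (the entry $F_{33}$ is shared), leaving a four-dimensional linear space and hence a $\PP^3$ after projectivization. A single generic correspondence $y^\top Fx=0$ then cuts this only to a $\PP^2$, so your count does not pin $F$ down. The correct argument uses the \emph{full} biconditional you have already established: if $F$ and $G$ both satisfy the stated properties, then for every $x\neq a$ the epipolar lines $\{y:y^\top Fx=0\}$ and $\{y:y^\top Gx=0\}$ coincide (each equals $B'\bigl((A')^{-1}(x)\bigr)$), so $Fx$ and $Gx$ are proportional for all $x$. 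Choosing $x_1,x_2$ with $Fx_1,Fx_2$ linearly independent and comparing $G(x_1+x_2)$ with $Gx_1+Gx_2$ forces the proportionality constants to agree, whence $G=\lambda F$.
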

The points $a,b$ are known as the right and left \textit{epipoles} of the fundamental matrix $F$. 
There is a simple, well-known geometric interpretation of the fundamental matrix; namely, there is a bijective correspondence between points $x\neq a\in\PP^2$ and lines through the left epipole $b$ defined by the linear map $\PP_x^2 \dashrightarrow \left( \PP_y^2\right)^*$ that sends $x \mapsto F x$---here $\left( \PP_y^2 \right)^*$ denotes the \emph{dual projective space} whose points represent lines in $\PP_y^2.$
This correspondence is illustrated in~\cite[Fig.~9.5]{hartley2003multiple}.
Similarly, the fundamental matrix $F^T$ maps points $y\ne b \in \PP_y^2$ to lines through the right epipole $a$.

In higher dimensions, the so-called \emph{Grassmann tensors} of projections $\PP^{l} \dashrightarrow \PP^k$ studied by Hartley and Schaffalitzky~\cite{hartley2009reconstruction} play a role comparable to the fundamental matrix.
To avoid technicalities, we limit our focus to the case $(l,k)=(3,2),$ where the geometry is already quite interesting.

\Cref{thm:fundamental-equivalence} admits a partial generalization to the case of $m>2$ sets of labeled points. In particular, we find that the existence of flatland cameras is equivalent to the existence of pairwise compatible fundamental matrices, but, in general, this is not sufficient to obtain a $3D$ reconstruction.

\begin{theorem}
\label{thm-many-cameras}

Suppose we are given $m$ sets of labeled points $\mathcal{X}_j=\{x_1^j,\ldots,x_n^j\}$ in $\PP^2$. There exist flatland cameras $A_j:\PP^2\dashedrightarrow\PP^1$ with centers $a_j\in\PP^2$ such that $\forall i,j,k$, $A_ix_k^i\sim A_jx_k^j$ if and only if there exist fundamental matrices $F^{ij}$ with left and right epipoles $a_i,a_j$ satisfying
\begin{equation}\label{eq:Fundamental matrices multiview1}
\forall i,j,k, \, \, \, (x_k^i)^\top F^{ij}x_k^j=0 .
\end{equation}
\end{theorem}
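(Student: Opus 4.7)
The plan is to bootstrap the two-view equivalence of~\Cref{thm:fundamental-equivalence} into an $m$-view statement by anchoring on a single reference view.

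For the forward direction, given flatland cameras $A_1,\ldots,A_m$ satisfying $A_i x_k^i \sim A_j x_k^j$ for all indices, I would apply the forward construction in the proof of~\Cref{thm:fundamental-equivalence} to each pair $(A_j, A_i)$: writing the rows of $A_\ell$ as $\mathbf{a}_{\ell,1}^\top, \mathbf{a}_{\ell,2}^\top$, set $F^{ij} := \mathbf{a}_{i,2} \mathbf{a}_{j,1}^\top - \mathbf{a}_{i,1} \mathbf{a}_{j,2}^\top$. The same $2 \times 2$ determinant computation used there verifies $(x_k^i)^\top F^{ij} x_k^j = 0$, and the rank-$2$ and epipole checks are identical, with the centers $a_i,a_j$ emerging as the left and right nullspace generators. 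No cross-pair consistency needs to be established; each pair is treated in isolation.

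For the reverse direction, I would invert~\Cref{thm:fundamental-equivalence} pair by pair, but only for the fundamental matrices $F^{1j}$ that share view~$1$. For each $j \ge 2$ this yields flatland cameras $A_1^{(1j)}, A_j^{(1j)}$ with centers $a_1, a_j$ and $A_1^{(1j)} x_k^1 \sim A_j^{(1j)} x_k^j$ for every $k$. The main obstacle is that the cameras $A_1^{(12)}, A_1^{(13)}, \ldots, A_1^{(1m)}$, while all having center $a_1$, are not the same object, so they cannot be directly glued together.

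The resolution exploits the $\PGL(2)$ gauge on the image line $\PP^1$: any two full-rank flatland cameras with a common center differ by left multiplication by an element of $\GL(2)$. Accordingly, I would fix $A_1 := A_1^{(12)}$ once and for all; for each $j \ge 2$, choose $H_j \in \GL(2)$ with $A_1 = H_j A_1^{(1j)}$, and set $A_j := H_j A_j^{(1j)}$. Since left multiplication does not change the center, each $A_j$ still has center $a_j$, and multiplying $A_1^{(1j)} x_k^1 \sim A_j^{(1j)} x_k^j$ on the left by $H_j$ gives $A_1 x_k^1 \sim A_j x_k^j$ for all $j,k$. The remaining compatibility $A_i x_k^i \sim A_j x_k^j$ for $i,j \ge 2$ follows by transitivity through view $1$. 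Note that the fundamental matrices $F^{ij}$ with $i,j > 1$ are never used in the construction; this redundancy inside the system~\eqref{eq:Fundamental matrices multiview1} is precisely the phenomenon that obstructs a clean $3D$ reconstruction for $m > 2$, as previewed in the surrounding discussion.
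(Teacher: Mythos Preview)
Your proposal is correct and follows essentially the same approach as the paper's proof: both directions reduce to the two-view case of~\Cref{thm:fundamental-equivalence}, and the reverse direction hinges on the fact that any two flatland cameras sharing a center differ by a $\PGL(2)$ change of image coordinates. Your presentation is in fact slightly cleaner than the paper's: by anchoring explicitly on view~$1$ and invoking transitivity, you make it transparent that only the matrices $F^{1j}$ are needed, whereas the paper constructs cameras $A_i^j$ for \emph{all} pairs and then asserts ``we can therefore assume $A_i^1=\ldots=A_i^m:=A_i$'' without spelling out how the normalizations are made globally consistent---which is exactly the step your reference-view argument handles.
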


\begin{proof}
The forward direction follows immediately from our work on the two-view case. We therefore need only prove the reverse direction.

For any $x$ in the $j$-th image, the line $F^{ij}x$ passes through the epipole $a_i$ in the $i$-th image. 
Additionally, the linear map represented by $F^{ij}$ is constant on lines through $a_j$ in the $j$-th image.
Condition \eqref{eq:Fundamental matrices multiview1} is equivalent to the statement that the line $F^{ij}x_k^j$ is the line $\langle a_i,x_k^i \rangle $ spanned by $a_i$ and $x_k^i$. Equivalently, by~\Cref{thm:fundamental-equivalence}, there exists a pair of flatland cameras $A_i^j,A_j^i$ with centers $a_i,a_j$ such that $A_i^jx_k^i \sim A_j^ix_k^j$.

Furthermore, the flatland cameras $A_i^j$, $j=1,\ldots,i-1,i+1,\ldots,m$ all share $a_i$ as their center of projection. Thus, they are equal up to change of coordinates in $\PP^1$. We can therefore assume that $A_i^1=\ldots=A_i^m:=A_i$, finishing our proof.
\end{proof}

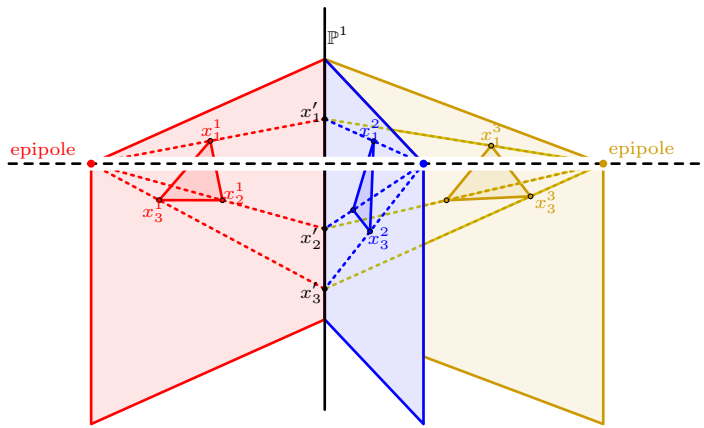
\begin{figure}
    \centering
 \definecolor{XTHREEbehind}{rgb}{0.8, 0.8,0.}
\definecolor{XTWO}{rgb}{0.,0.,1.}
\definecolor{XTHREE}{rgb}{0.8, 0.6,0.}
\definecolor{XONE}{rgb}{1.,0,0.}

\begin{tikzpicture}[scale =0.6,line cap=round,line join=round,>=triangle 45,x=1.0cm,y=1.0cm]

\draw[line width=1.0pt,color=XTHREE,fill=XTHREE,fill opacity=0.10000000149011612] (0.,5.7742237762211195) -- (6.171233006104707,3.455565912139384) -- (6.171233006104707,-2.3186578640817355) -- (2.18818597535383,-0.8221460143878152) -- (2.18818597535383,3.455565912139384) -- cycle;
\draw[line width=1.0pt,color=XTHREE,fill=XTHREE,fill opacity=0.10000000149011612] (3.6875927818500154,3.854100514146576) -- (2.6973237672120485,2.643547349503118) -- (4.560167514788904,2.730269972582122) -- cycle;
\draw [line width=1.0pt,dotted,color=XTHREEbehind] (6.171233006104707,3.455565912139384)-- (0.,0.6772988177744939);
\draw [line width=1.0pt,dotted,color=XTHREEbehind] (0.,4.445826041590958)-- (6.171233006104707,3.455565912139384);
\draw [line width=1.0pt,dotted,color=XTHREE] (1.4773735876549052,4.208760910731003)-- (6.171233006104707,3.455565912139384);
\draw [line width=1.0pt,dotted,color=XTHREE] (2.18818597535383,1.6624123413867864)-- (6.171233006104707,3.455565912139384);
\draw [line width=1.0pt,dotted,color=XTHREEbehind] (0.,2.0130539146446016)-- (6.171233006104707,3.455565912139384);
\draw [line width=1.0pt,dotted,color=XTHREE] (2.18818597535383,2.524537522389644)-- (6.171233006104707,3.455565912139384);

\draw[line width=1.0pt,color=XTWO,fill=XTWO,fill opacity=0.10000000149011612] (0.,5.7742237762211195) -- (2.18818597535383,3.455565912139384) -- (2.18818597535383,-2.3186578640817355) -- (0.,0.) -- cycle;
\draw[line width=1.0pt,color=XONE,fill=XONE,fill opacity=0.10000000149011612] (0.,5.7742237762211195) -- (-5.175819581499535,3.455565912139384) -- (-5.175819581499535,-2.3186578640817355) -- (0.,0.) -- cycle;
\draw[line width=1.0pt,color=XONE,fill=XONE,fill opacity=0.10000000149011612] (-2.5360850511530666,3.960611309603055) -- (-2.2664854806503656,2.6447282700785024) -- (-3.668554603677176,2.646498937444091) -- cycle;
\draw[line width=1.0pt,color=XTWO,fill=XTWO,fill opacity=0.10000000149011612] (1.0847263776469522,3.9549348215361744) -- (0.6288195001768166,2.4275889146310883) -- (1.0043060258130363,1.952432840026672) -- cycle;

\draw [line width=1.0pt] (0.,-2) -- (0.,6.9);

\draw [line width=1.0pt,dotted,color=XONE] (-5.175819581499535,3.455565912139384)-- (0.,0.6772988177744939);
\draw [line width=1.0pt,dotted,color=XONE] (0.,4.445826041590958)-- (-5.175819581499535,3.455565912139384);
\draw [line width=1.0pt,dotted,color=XTWO] (2.18818597535383,3.455565912139384)-- (0.,0.6772988177744939);
\draw [line width=1.0pt,dotted,color=XTWO] (0.,4.445826041590958)-- (2.18818597535383,3.455565912139384);
\draw [line width=1.0pt,dotted,color=XTWO] (2.18818597535383,3.455565912139384)-- (0.,2.0130539146446016);
\draw [line width=1.0pt,dotted,color=XONE] (0.,2.0130539146446016)-- (-5.175819581499535,3.455565912139384);
\draw [line width=5.2pt,color=white,domain=-7.0: 8.5] plot(\x,{(--39.21048812497821-0.*\x)/11.347052587604242});
\draw [line width=1.0pt,dash pattern=on 3pt off 3pt,domain=-7.0: 8.5] plot(\x,{(--39.21048812497821-0.*\x)/11.347052587604242});

\begin{scriptsize}
	\draw[color=black] (0.3, 6.3) node {${\mathbb P}^1$};
\fill [color=XTWO] (2.18818597535383,3.455565912139384) circle (2.5pt);
\fill [color=XONE] (-5.175819581499535,3.455565912139384) circle (2.5pt);
	\draw[color=XONE] (-6.241101468026629,3.7574744881916913) node {epipole};
\fill [color=XTHREE] (6.171233006104707,3.455565912139384) circle (2.5pt);
	\draw[color=XTHREE] (7.018723192190756,3.7816271742758762) node {epipole};

\draw [fill=black] (0.,4.445826041590958) circle (1.5pt);
		\draw[color=black] (-0.3, 4.6) node {$x_1'$};
\draw [fill=black] (0.,2.0130539146446016) circle (1.5pt);
		\draw[color=black] (-0.3, 1.8) node {$x_2'$};
\draw [fill=black] (0.,0.6772988177744939) circle (1.5pt);
			\draw[color=black] (-0.3 , 0.6) node {$x_3'$};

\draw [fill=XONE] (-2.5360850511530666,3.960611309603055) circle (1.5pt);
		\draw[color=XONE] (-2.5,4.2) node {$x_1^1$};
\draw [fill=XONE] (-2.2664854806503656,2.6447282700785024) circle (1.5pt);
		\draw[color=XONE] (-2.0, 2.8) node {$x^1_2$};
\draw [fill=XONE] (-3.668554603677176,2.646498937444091) circle (1.5pt);
		\draw[color=XONE] (-3.8, 2.4) node {$x_3^1$};

\draw [fill=XTWO] (1.0847263776469522,3.9549348215361744) circle (1.5pt);
		\draw[color=XTWO] (1.0, 4.2) node {$x_1^2$};
\draw [fill=XTWO] (0.6288195001768166,2.4275889146310883) circle (1.5pt);
\draw [fill=XTWO] (1.0043060258130363,1.952432840026672) circle (1.5pt);
		\draw[color=XTWO] (1.2, 1.8) node {$x_3^2$};

\draw [fill=XTHREE] (3.6875927818500154,3.854100514146576) circle (1.5pt);
		\draw[color=XTHREE] (3.7, 4.1) node {$x_1^3$};
\draw [fill=XTHREE] (2.6973237672120485,2.643547349503118) circle (1.5pt);

\draw [fill=XTHREE] (4.560167514788904,2.730269972582122) circle (1.5pt);
		\draw[color=XTHREE] (4.9, 2.6) node {$x_3^3$};

\end{scriptsize}
\end{tikzpicture}       \caption{A geometric interpretation of Theorem~\ref{thm-many-cameras}.}
    \label{fig:three-cameras}
\end{figure}

{\bf Remark.}
    Here
 is a geometric way of interpreting Theorem~\ref{thm-many-cameras}.  If the points $\mathcal{X}_j $ satisfy the hypotheses of the theorem, we can find homographies that map each set to a different plane $\PP^2 \subset \PP^3$ so that the flatland cameras (potential epipoles for 3D cameras) are collinear along some line $\ell$, and the ${\mathbb P}^1$ images are not just projectively equivalent, but are actually identical, as in Figure~\ref{fig:three-cameras}.  

In this case, for any two planes ${\mathbb P}_i^2$ and ${\mathbb P}_j^2$, we can choose points $a_i', a_j' \in \ell$ that serve as camera centers allowing us to reconstruct a 3D object for which $\{x_n^i \}$ and $\{x_n^j \}$ are the images.  It is not generally the case, however, that such pairwise reconstruction gives us consistent reconstruction for three or more cameras. To observe this, suppose we reconstructed world points $\mathcal{Z}_{12}$ for the first pair of cameras. Consistency with the the $i$th camera requires that $A_i' z_j\sim x_j^i$ for all $j$. However, we only have the weaker constraint $A_i A_i' z_j \sim A_i x_j^i$, or equivalently, 
\[
\left(A_i' z_j \right)^\top F^{ik}x_j^k=0\quad\forall k=1,\ldots,m.
\]

We note that the projections $A_i A_i ' :\PP^3 \dashrightarrow \PP^1 $ are sometimes referred to as {\em radial cameras}, and the associated constraints have been used for reconstruction (see~\cite{DBLP:conf/cvpr/HrubyKDOPPL23} and the references therein).

\section{Invariant theory of labeled points in $\PP^1$}
\label{sec:invariants}

We now transition to Question II about the loci of centers of a flatland camera pair that can produce the same 
$\PP^1$-image of point sets $\X, \Y$ in $\PP^2$. In Theorem~\ref{thm:loci of centers} we saw a high-level summary of the answers. This relies on two main ingredients --- classical results from the invariant theory of labeled points in $\PP^1$, which we will introduce in this section, and algebraic geometry tools that will be described in~\Cref{sec:epv}. 

To motivate the role of invariant theory, 
observe that if $A$ is a flatland camera with center $a$, then for any $H\in \PGL(2)$, $HA$ is also a flatland camera with center $a$ and conversely any flatland camera with center $a$ is of the form $HA$.
As $H$ varies it generates a $\PGL(2)$-orbit of $\{A x_i\}$ in $\left(\PP^1\right)^n$. Since $H \in \PGL(2)$ is only defined up to scale, we may assume that $\det H =1$ and instead speak of the $\SL(2)$-orbit of $\{Ax_i\}$ in $\left(\PP^1\right)^n$.  We denote this orbit by $\ORBIT{X}{a}$ as this is the set of images of $\mathcal{X}$ by {\em all} flatland cameras with center $a$. Similarly $\ORBIT{Y}{b}$ denotes the $\SL(2)$-orbit of $\{By_i\}$ in $\left(\PP^1\right)^n$. Then we can check that the following lemma is true.

\begin{lemma}  There exists flatland cameras $A$ and $B$ with centers $a$ and $b$ respectively such that $\forall i$, $Ax_i \sim By_i$, if and only if $\ORBIT{X}{a} = \ORBIT{Y}{b}$.
\end{lemma}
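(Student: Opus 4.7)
The lemma is essentially a book-keeping statement about group orbits, with $\SL(2)$ acting diagonally on $(\PP^1)^n$. The structural fact I would first isolate is that every flatland camera with center $a$ has the form $H A_0$ for some fixed flatland camera $A_0$ with center $a$ and some $H \in \SL(2)$: indeed, left multiplication preserves the right nullspace, and the $2$-dimensional row space of any flatland camera with center $a$ is forced to equal that of $A_0$, so two such cameras differ by an invertible $2\times 2$ factor which (after rescaling, and using that the action on $\PP^1$ is insensitive to the sign $\pm I$) may be taken in $\SL(2)$. Consequently, $\ORBIT{X}{a}$ coincides with the set of all tuples $(Ax_1, \ldots, Ax_n) \in (\PP^1)^n$ as $A$ ranges over flatland cameras with center $a$, and similarly for $\ORBIT{Y}{b}$.

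For the forward direction, assume $A$ and $B$ are flatland cameras with centers $a$ and $b$ satisfying $Ax_i \sim By_i$ for all $i$. Then the single tuple $(Ax_1, \ldots, Ax_n) = (By_1, \ldots, By_n)$ in $(\PP^1)^n$ lies simultaneously in $\ORBIT{X}{a}$ (via the identity element) and in $\ORBIT{Y}{b}$. Since $\SL(2)$-orbits partition $(\PP^1)^n$, two orbits that share a point must coincide, giving $\ORBIT{X}{a} = \ORBIT{Y}{b}$.

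For the reverse direction, fix any flatland cameras $A$ with center $a$ and $B_0$ with center $b$. By the identification above, $(Ax_1, \ldots, Ax_n) \in \ORBIT{X}{a}$, and by hypothesis this tuple also lies in $\ORBIT{Y}{b}$. Hence there exists $H \in \SL(2)$ such that $Ax_i \sim H B_0 y_i$ for every $i$. Setting $B := H B_0$, the matrix $B$ is a full-rank $2 \times 3$ matrix with the same right nullspace as $B_0$, so it is a flatland camera with center $b$, and the identities $Ax_i \sim By_i$ are the ones we need.

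There is no real obstacle; the one point worth stating explicitly is the identification of $\ORBIT{X}{a}$ with the set of all possible images of $\X$ through flatland cameras with center $a$, which the paragraph preceding the lemma already essentially asserts. Both directions then reduce to the standard orbit-partition property of group actions.
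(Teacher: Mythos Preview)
Your argument is correct and is exactly the routine verification the paper has in mind: it states the lemma without proof, having already observed in the preceding paragraph that every flatland camera with center $a$ is of the form $HA$ and that $\ORBIT{X}{a}$ is precisely the set of images of $\X$ under all such cameras. Your two directions---orbits with a common point coincide, and membership in the other orbit yields the required $H$---are the natural unpacking of this.
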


The above lemma motivates the following rephrasing of Question II:
\begin{question}\label{question:q2}
    Given two sets of labeled, distinct, and generic points $\X \subset \PP^2$ and  $\Y \subset \PP^2$, 
    what is the locus of $(a,b) \in \PP^2 \times \PP^2$ such that $\ORBIT{X}{a} = \ORBIT{Y}{b}$?
\end{question}

The rest of this section is devoted to developing Theorem~\ref{thm:pull back tool}, which gives a general condition on $a$  and $b$ so that $\ORBIT{X}{a} = \ORBIT{Y}{b}$ using tools from the invariant theory of $n$ labeled points in $\PP^1$.

Spaces that parameterize orbits of an algebraic variety under a group action can be constructed using the framework of {\em Geometric Invariant Theory} (GIT)~\cite{DolgachevGIT}.
 The {\em GIT quotient} of $n$ labeled points in $\PP^1$ by the action of $\SL(2)$ is a well-known projective variety $\M := (\PP^1)^n//\SL(2)$ with (a finitely generated) homogeneous coordinate ring $R$ \cite{DolgachevGIT}.

Let $T = (t_{ij})$ be a generic (symbolic) matrix of size $2 \times n$ with $i$th column 
$$t_i = \begin{pmatrix} t_{i1}\\t_{i2} \end{pmatrix} \,\,\Rightarrow \,\,T = \begin{bmatrix} t_{11} & t_{21} & \cdots & t_{n1} \\ t_{12} & t_{22} & \cdots & t_{n2} \end{bmatrix},$$ 
and let $\CC[T]$ be the polynomial ring in the $2n$ variables $t_{ij}$ that is the algebra of all polynomial functions on $n$ labeled points in $\CC^2$, organized as the columns $t_i$ of $T$.

\begin{definition}
    A polynomial $f(T) \in \CC[T]$ is $\SL(2)$-invariant if 
$f(HT) = f(T)$ for all $H \in \SL(2)$.
\end{definition}

Let the {\em bracket} $[ij]_t$ denote the 
determinant $\det[t_i \,\,t_j]$ for all $1 \leq i < j \leq n$.
When the variables $t$ are understood, we simply write $[ij].$
Since $\det [Ht_i \,\,Ht_j] = \det(H) \det[t_i \,\, t_j] = \det[t_i \,\,t_j]$ for all $H \in \SL(2)$,  $[ij] = \det[t_i \,\,t_j]$ is $\SL(2)$-invariant. By the {\em First Fundamental Theorem} of invariant theory (see eg.~\cite[Theorem 3.2.1]{SturmfelsInvariant}),  $\CC[T]^{\SL(2)}$, the subalgebra of all $\SL(2)$-invariant polynomials in $\CC[T]$, is generated  by the bracket polynomials $[ij]$. .

The ring $R$ that we are interested in is a subalgebra of $\CC[T]^{\SL(2)}$, and hence 
its generators can be described in terms of brackets. 
The ring $\CC[T]^{\SL(2)}$ is multi-graded; assign degree $e_i + e_j$ to $[ij]$,  where $e_i$ is the $i$th standard unit vector in $\ZZ^n$. The multidegree of a monomial in  brackets (called a {\em bracket monomial}) is the sum of the multidegrees of its individual brackets. For example, 
 if $n=4$ then $[12][34]^2$ is a bracket monomial of 
 degree $(1,1,2,2)$, and 
    $[12][34]-[13][24]$ is a (homogeneous) bracket polynomial of degree 
    $(1,1,1,1)$.
Note that the multidegree of a bracket monomial records the number of times the index $i$ appears 
in the monomial, and that all bracket monomials lie in $\CC[T]^{\SL(2)}$.
Let $\ones \in \RR^n$ be the vector of all ones and 
$d \ones$ denote its scaling by $d \in \NN$. When   
$d=2$, we use the symbol 
$\twos$ for the vector $2 \cdot \ones \in \RR^n$ of all $2$'s. 

The following classical result originates from work of Kempe in 1894~\cite{Kempe1894}.
Our statement follows closely a more modern source~\cite[Theorem 2.3]{HMSVDuke2009}.

\begin{theorem} (Kempe's Theorem \cite{Kempe1894}) \label{thm:generators of R v1}
    Let $R_{d \ones} := \CC[T]^{\SL(2)}_{d \ones}$ be the vector space spanned by bracket monomials of degree $d \ones$, and $R$ be the coordinate ring of the GIT quotient $\mathcal{M} = (\PP^1)^n//\SL(2)$. 
    \begin{enumerate}
        \item  The 
        ring $R$ admits a multigrading $ R = \bigoplus_{d \in \NN} R_{d \ones}.$
        \item  The ring $R$ is generated as an algebra by the first nonzero graded piece $R_{d \ones}$; when $n$ is even, $R$ is generated by $R_\ones$ 
        and when $n$ is odd,  by $R_{\twos}$. 
    \end{enumerate}
\end{theorem}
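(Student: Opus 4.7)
The plan is to handle (1) by unwinding the GIT construction, and then to prove (2) via a combinatorial straightening argument grounded in the Plücker syzygies.

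For (1), the GIT quotient $\M = (\PP^1)^n // \SL(2)$ with the symmetric linearization $\mathcal{O}(1,\ldots,1)$ has homogeneous coordinate ring
\[
R \;=\; \bigoplus_{d \ge 0} H^0\bigl((\PP^1)^n,\, \mathcal{O}(d,\ldots,d)\bigr)^{\SL(2)}.
\]
Identifying $H^0((\PP^1)^n, \mathcal{O}(d,\ldots,d))$ with the multihomogeneous component $\CC[T]_{d\ones}$ gives $R_d = \CC[T]^{\SL(2)}_{d\ones}$, yielding the multigrading in (1). By the First Fundamental Theorem each $R_{d\ones}$ is spanned by bracket monomials in which every index $i \in \{1,\ldots,n\}$ appears exactly $d$ times. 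A parity obstruction then pins down the minimal nonzero piece: such a bracket monomial has $dn/2$ brackets, so $dn$ must be even. For $n$ even, $R_\ones \ne 0$ (take $[12][34]\cdots[(n-1)\,n]$); for $n$ odd, $d=1$ is impossible and $R_\twos \ne 0$ (take the cyclic product $[12][23]\cdots[(n-1)\,n][n\,1]$).

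The core of (2) is to show that, for $n$ even, every element of $R_{d\ones}$ is a sum of $d$-fold products of elements of $R_\ones$ (and analogously with $R_\twos$ and $d/2$-fold products for $n$ odd with $d$ even). My approach is to encode each bracket monomial of multidegree $d\ones$ as a $d$-regular multigraph $G$ on vertex set $\{1,\ldots,n\}$, with one edge per bracket. The claim becomes that $G$ decomposes, modulo the three-term Plücker relation
\[
[ij][kl] - [ik][jl] + [il][jk] = 0,
\]
into a $\CC$-linear combination of disjoint unions of $d$ perfect matchings (for $n$ even) or $d/2$ disjoint $2$-regular multigraphs (for $n$ odd). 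I would induct on $d$: extract a perfect matching $M \subseteq G$, applying a Plücker swap $[ij][kl] \mapsto [ik][jl] + [il][jk]$ whenever a naive extraction is obstructed by parallel edges, then apply the inductive hypothesis to the $(d-1)$-regular remainder $G\setminus M$.

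The main obstacle will be this combinatorial straightening: one must exhibit a procedure of Plücker swaps that terminates and always yields a matching. A clean organization uses a lex-order on the multiplicity vector of parallel edges, showing each swap strictly decreases it; a Hall/König-type argument then guarantees a perfect matching once no further swap is available. The odd case is entirely analogous, replacing ``perfect matching'' by a $2$-regular spanning sub-multigraph (a union of cycles covering all vertices) and reducing the regularity by $2$ at each step.
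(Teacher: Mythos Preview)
The paper does not give its own proof of this theorem: it is quoted as a classical result of Kempe, with a pointer to \cite[Theorem~2.3]{HMSVDuke2009}, and is used as a black box for the rest of the section. So there is nothing in the paper to compare your argument against, and I assess your sketch on its own terms.

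Your handling of (1) is correct and standard, and your parity remark identifying the first nonzero graded piece is fine.

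For (2), encoding a degree-$d\ones$ bracket monomial as a $d$-regular multigraph and peeling off perfect matchings via Pl\"ucker relations is indeed Kempe's strategy, and matches the graphical viewpoint the paper describes just after the theorem. But your termination argument misidentifies the obstruction. You say the naive matching extraction is ``obstructed by parallel edges'' and propose a lex order on parallel-edge multiplicities as your decreasing invariant. Parallel edges are not what blocks a perfect matching in a regular multigraph; odd components are (Tutte's condition). Two disjoint triangles on $\{1,2,3\}$ and $\{4,5,6\}$ give a $2$-regular simple graph on six vertices with no parallel edges and no perfect matching---yet your invariant is already at its minimum there, so the procedure cannot make progress. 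A Pl\"ucker swap on a pair such as $[13][45]$ does fix this particular example, but you need an invariant that actually sees it; something like the number of crossings in the $n$-gon picture (as in the uncrossing discussion following the theorem in the paper) or a measure of odd components works, whereas parallel-edge multiplicity does not. Your appeal to Hall/K\"onig is also off target: those results are for bipartite graphs, which these are not. The repair is not difficult once you track the right quantity, but as written the inductive step does not close.
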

 
Kempe introduced a graphical representation of bracket monomials 
which helps to identify a smaller generating set for $R$.  Arrange $1,\ldots, n$ on the vertices of a 
regular $n$-gon and consider any directed graph $\Gamma$ with edge set $E(\Gamma)$ and vertex set $[n]$. 
The \emph{degree} of $\Gamma$ is the ordered vector of its vertex degrees with 
no difference between in and out degrees. The bracket monomial of $\Gamma$ is:
\begin{align}
    m_\Gamma = \prod_{\overrightarrow{ij} \in E(\Gamma)} [ij].
\end{align}
Note that the degree of $\Gamma$ is the same as the multi-degree of $m_\Gamma$. 
Since the sign of a bracket monomial is unimportant in its role as a generator of $R$,  we may ignore the directions of the arrows in  $\Gamma$ giving rise to the
monomial $m_\Gamma$. However, 
when we want to write down dependences among these bracket monomials we do need 
to be careful about signs. 
If $m_\Gamma$ has degree $\ones$ then $n$ has to be even and 
$\Gamma$ is a perfect matching on $[n]$. 
By Kempe's theorem, if $n$ is even, $R$ is generated by all  $m_\Gamma$ where $\Gamma$ is a perfect matching on $[n]$. 
If $n$ is odd then there are no $m_\Gamma$'s of degree $\ones$, but there are 
$m_\Gamma$'s of degree $\twos$. This explains Part (2) of Kempe's theorem. See~\Cref{fig:six-pentagon-monomials}.

There are only two types of linear relations among the bracket monomials of degree $d \ones$. 
\begin{enumerate}
    \item If we switch the order of indices in a bracket, a bracket monomial changes sign:
\begin{align} \label{eq:relation: switch an edge direction}
    m_\Gamma = - m_{\Gamma'} \,\,\,\textup{ if } \Gamma \textup{ and } \Gamma' \textup{ differ in the direction of one edge}.
\end{align}
In the simplest case, i.e., when $n=2$,  this has the form $m_{\overrightarrow{12}} = - m_{\overrightarrow{21}}$. 

\item The second type of linear relation is the {\em Pl\"ucker relation} 
\begin{align} \label{eq:relation:generalized Plucker}
    m_{\Gamma} = m_{\Gamma_1} + m_{\Gamma_2}
\end{align}
where we identify a pair of edges $\overrightarrow{13}, \overrightarrow{24}$ in $\Gamma$ and obtain $\Gamma_1$ by replacing these edges with 
$\overrightarrow{12},\overrightarrow{34}$ and $\Gamma_2$ by replacing with 
$\overrightarrow{14},\overrightarrow{23}$. This operation ``uncrosses'' the crossing edges $\overrightarrow{13}, \overrightarrow{24}$ in $\Gamma$. When $n=4$, it has the form 
\begin{align}\label{eq:pluecker n=4}
        [12][34]+[14][23]=[13][24].
        \end{align}
\end{enumerate}

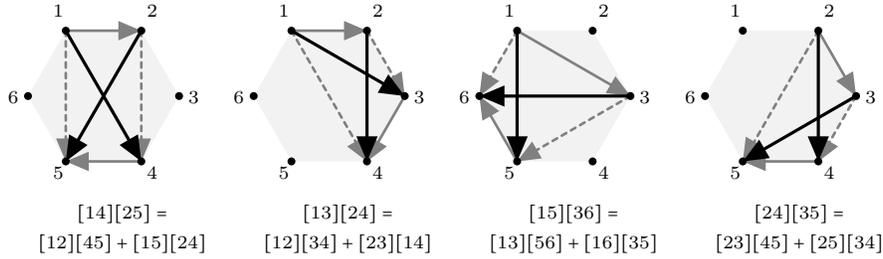
\begin{figure}[ht]
    \definecolor{yqyqyq}{rgb}{0.5019607843137255,0.5019607843137255,0.5019607843137255}
\definecolor{xdxdff}{rgb}{0.49019607843137253,0.49019607843137253,1.}
\definecolor{uuuuuu}{rgb}{0.26666666666666666,0.26666666666666666,0.26666666666666666}
\begin{tikzpicture}[line cap=round,line join=round,>=triangle 45,x=1.0cm,y=1.0cm]
%\clip(-2.934363326718904,-3.782853869497876) rectangle (13.225649024421674,3.435059596963442);
\fill[line width=1.0pt,dotted,color=yqyqyq,fill=yqyqyq,fill opacity=0.10000000149011612] (0.,0.) -- (1.0042170894063625,0.) -- (1.5063256341095437,0.8696775103403789) -- (1.0042170894063625,1.7393550206807578) -- (0.,1.7393550206807582) -- (-0.5021085447031814,0.8696775103403797) -- cycle;
\fill[line width=1.0pt,dotted,color=yqyqyq,fill=yqyqyq,fill opacity=0.10000000149011612] (3.,0.) -- (4.004217089406363,0.) -- (4.506325634109544,0.8696775103403792) -- (4.004217089406363,1.7393550206807586) -- (3.,1.7393550206807589) -- (2.4978914552968186,0.86967751034038) -- cycle;
\fill[line width=1.0pt,dotted,color=yqyqyq,fill=yqyqyq,fill opacity=0.10000000149011612] (6.,0.) -- (7.004217089406362,0.) -- (7.506325634109543,0.8696775103403784) -- (7.004217089406362,1.7393550206807569) -- (6.,1.7393550206807573) -- (5.497891455296819,0.8696775103403792) -- cycle;
\fill[line width=1.0pt,dotted,color=yqyqyq,fill=yqyqyq,fill opacity=0.10000000149011612] (9.,0.) -- (10.004217089406362,0.) -- (10.506325634109542,0.8696775103403792) -- (10.00421708940636,1.7393550206807569) -- (9.,1.7393550206807564) -- (8.497891455296818,0.8696775103403784) -- cycle;

\draw [->,line width=1.0pt,color=yqyqyq](0.,1.7393550206807582) -- (1.0042170894063625,1.7393550206807578);
\draw [->,line width=1.0pt,color=yqyqyq](1.0042170894063625,0.) -- (0.,0.);
\draw [->,line width=1.0pt,color=yqyqyq](3.,1.7393550206807589) -- (4.004217089406363,1.7393550206807586);
\draw [->,line width=1.0pt,color=yqyqyq](4.506325634109544,0.8696775103403792) -- (4.004217089406363,0.);
\draw [->,line width=1.0pt,color=yqyqyq](6.,1.7393550206807573) -- (7.506325634109543,0.8696775103403784);
\draw [->,line width=1.0pt,color=yqyqyq](6.,0.) -- (5.497891455296819,0.8696775103403792);
\draw [->,line width=1.0pt,color=yqyqyq](10.00421708940636,1.7393550206807569) -- (10.506325634109542,0.8696775103403792);
\draw [->,line width=1.0pt,color=yqyqyq](10.004217089406362,0.) -- (9.,0.);
\draw [->,line width=1.0pt,dash pattern=on 2pt off 2pt,color=yqyqyq] (0.,1.7393550206807582) -- (0.,0.);
\draw [->,line width=1.0pt,dash pattern=on 2pt off 2pt,color=yqyqyq] (1.0042170894063625,1.7393550206807578) -- (1.0042170894063625,0.);
\draw [->,line width=1.0pt,dash pattern=on 2pt off 2pt,color=yqyqyq] (3.,1.7393550206807589) -- (4.004217089406363,0.);
\draw [->,line width=1.0pt,dash pattern=on 2pt off 2pt,color=yqyqyq] (4.004217089406363,1.7393550206807586) -- (4.506325634109544,0.8696775103403792);
\draw [->,line width=1.0pt,dash pattern=on 2pt off 2pt,color=yqyqyq] (6.,1.7393550206807573) -- (5.497891455296819,0.8696775103403792);
\draw [->,line width=1.0pt,dash pattern=on 2pt off 2pt,color=yqyqyq] (7.506325634109543,0.8696775103403784) -- (6.,0.);
\draw [->,line width=1.0pt,dash pattern=on 2pt off 2pt,color=yqyqyq] (10.00421708940636,1.7393550206807569) -- (9.,0.);
\draw [->,line width=1.0pt,dash pattern=on 2pt off 2pt,color=yqyqyq] (10.506325634109542,0.8696775103403792) -- (10.004217089406362,0.);

\draw [->,line width=1.2pt] (0.,1.7393550206807582) -- (1.0042170894063625,0.);
\draw [->,line width=1.2pt] (1.0042170894063625,1.7393550206807578) -- (0.,0.);
\draw [->,line width=1.2pt] (3.,1.7393550206807589) -- (4.506325634109544,0.8696775103403792);
\draw [->,line width=1.2pt] (4.004217089406363,1.7393550206807586) -- (4.004217089406363,0.);
\draw [->,line width=1.2pt] (6.,1.7393550206807573) -- (6.,0.);
\draw [->,line width=1.2pt] (7.506325634109543,0.8696775103403784) -- (5.497891455296819,0.8696775103403792);
\draw [->,line width=1.2pt] (10.00421708940636,1.7393550206807569) -- (10.004217089406362,0.);
\draw [->,line width=1.2pt] (10.506325634109542,0.8696775103403792) -- (9.,0.);

\begin{scriptsize}
\foreach \n in {0,...,3} {    
    \fill  (0.+3*\n,1.7393550206807582) circle (1.5pt);
    	\draw (-0.1+3*\n, 2.0) node {1};
    \fill (1.0042170894063625+3*\n,1.7393550206807578) circle (1.5pt);
    	\draw (1.15+3*\n, 2.0) node {2};
     \fill  (1.5063256341095437+3*\n,0.8696775103403789) circle (1.5pt);
   	 \draw (1.7+3*\n, 0.86) node {3};
    \fill  (1.0042170894063625+3*\n,0.) circle (1.5pt);
    \draw  (1.15+3*\n, -0.15) node {4};
    \fill  (0.+3*\n,0.) circle (1.5pt);
  	  \draw (-0.1+3*\n, -0.15) node {5};
   \fill (-0.5021085447031814+3*\n,0.8696775103403797) circle (1.5pt);
	    \draw (-0.7+3*\n, 0.86) node {6};
}	  

\draw (0.75, -0.7) node {$[14][25] =$};
	\draw (0.75, -1.1) node {$[12][45] + [15][24]$};
\draw (3.75, -0.7) node {$[13][24] =$};
	\draw (3.75, -1.1) node {$ [12][34] + [23][14]$};
\draw (6.75, -0.7) node {$[15][36] =$};
	\draw (6.75, -1.1) node {$[13][56] + [16][35]$};
\draw (9.75, -0.7) node {$[24][35] =$};
	\draw (9.75, -1.1) node {$[23][45] + [25][34]$};

\end{scriptsize}
\end{tikzpicture}
    \caption{These figures show several examples of the Pl\"ucker relation being used to ``uncross'' the two (bold) edges  in a bracket monomial on $6$ variables. \label{fig:plucker-relations}.}
\end{figure}

From the preceding discussion, we obtain the following refinement of Kempe's theorem.

\begin{theorem} \label{thm:even odd generators}
When $n$ is even, $R$ is generated as an algebra by the 
bracket monomials $m_\Gamma$ of degree $\ones$ where $\Gamma$ is a non-crossing perfect matching on $[n]$, and when $n$ is odd, by the 
bracket monomials $m_\Gamma$ of degree $\twos$ where $\Gamma$ has no crossing edges. (Adjacent edges are not crossing.) 
\end{theorem}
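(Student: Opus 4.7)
The plan is to deduce this statement from Kempe's theorem (\Cref{thm:generators of R v1}) by showing that every bracket monomial $m_\Gamma$ of the relevant degree ($\ones$ when $n$ is even, $\twos$ when $n$ is odd) can be rewritten, using the relations \eqref{eq:relation: switch an edge direction} and \eqref{eq:relation:generalized Plucker}, as a $\ZZ$-linear combination of bracket monomials $m_{\Gamma'}$ of the same multidegree with $\Gamma'$ non-crossing. The sign relation \eqref{eq:relation: switch an edge direction} accounts for the freedom to work with undirected graphs, while the Pl\"ucker relation \eqref{eq:relation:generalized Plucker} is the tool that removes crossings.

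Let $c(\Gamma)$ denote the total number of unordered pairs of edges of $\Gamma$ whose endpoints interleave on the $n$-gon. I would proceed by induction on $c(\Gamma)$, with the base case $c(\Gamma)=0$ trivial. For the inductive step, pick any pair of crossing edges in $\Gamma$ and label their endpoints $i<j<k<l$ in the cyclic order of the $n$-gon. The Pl\"ucker relation gives
\[
m_\Gamma \;=\; m_{\Gamma_1} + m_{\Gamma_2},
\]
where $\Gamma_1$ is obtained from $\Gamma$ by replacing $\{i,k\},\{j,l\}$ with $\{i,j\},\{k,l\}$, and $\Gamma_2$ by replacing them with $\{i,l\},\{j,k\}$. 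Both replacements preserve the multidegree and the degree sequence at every vertex, so $\Gamma_1$ and $\Gamma_2$ are again perfect matchings (for $n$ even) or $2$-regular multigraphs (for $n$ odd), keeping us inside the generating set furnished by \Cref{thm:generators of R v1}.

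The main technical point, and essentially the only place where any work is required, is the combinatorial inequality $c(\Gamma_1) < c(\Gamma)$ and $c(\Gamma_2) < c(\Gamma)$. To verify it, first note that the crossing between $\{i,k\}$ and $\{j,l\}$ itself is eliminated in both $\Gamma_1$ and $\Gamma_2$, contributing a strict decrease of $1$. It then suffices to show that for every third edge $e$ of $\Gamma$, the number of edges of the crossed pair $\{i,k\},\{j,l\}$ crossed by $e$ is at least the number of edges of each uncrossed pair $\{i,j\},\{k,l\}$ and $\{i,l\},\{j,k\}$ crossed by $e$. This reduces to a short case check based on which of the four open arcs of the $n$-gon cut out by $\{i,j,k,l\}$ contain the endpoints of $e$, and only a handful of cases arise up to the obvious dihedral symmetry.

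Granting the inequality, iterating the Pl\"ucker rewrite terminates in finitely many steps and expresses $m_\Gamma$ as a $\ZZ$-linear combination of $m_{\Gamma'}$ with $\Gamma'$ non-crossing and of the prescribed multidegree. Combined with Kempe's theorem this yields the refined generating set in the statement. The hard part of the proof is thus not the algebra of brackets but the planar combinatorics of chord diagrams that underlies the strict decrease of $c(\Gamma)$; once that lemma is in hand, the rest is a direct induction.
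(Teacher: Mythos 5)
Your proposal is correct and follows essentially the same route as the paper: start from Kempe's theorem (\Cref{thm:generators of R v1}) and repeatedly apply the Pl\"ucker relation \eqref{eq:relation:generalized Plucker} to uncross crossing edges while staying within the given multidegree. The only difference is that you make explicit the termination argument (induction on the number of crossings, with the case check showing each uncrossing does not increase crossings with third edges), a detail the paper leaves implicit in its ``preceding discussion'' and \Cref{fig:plucker-relations}.
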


\begin{example} \label{ex:min gens}
    When $n=5$, the ring $R$ is generated by the bracket monomials $m_\Gamma$  corresponding to the $6$ non-crossing graphs of degree $(2,2,2,2,2)$ that can be drawn 
    on the $5$ vertices of a regular pentagon. See \cite[Figure 6]{HMSVDuke2009}
    or Figure~\ref{fig:six-pentagon-monomials} below.
    
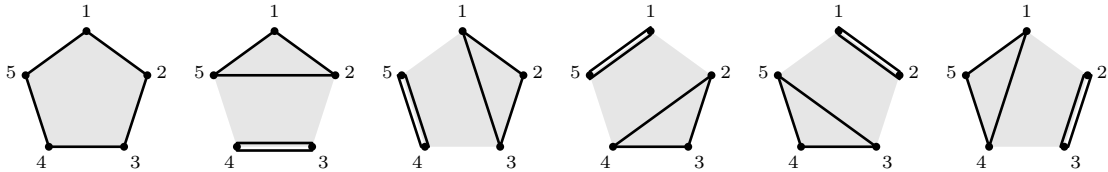
\begin{figure}[ht]
    \centering
    \begin{tikzpicture}[line cap=round,line join=round,>=triangle 45,x=1.0cm,y=1.0cm]

\foreach \n in {0,...,5} {
\fill[fill=black,fill opacity=0.1] 
	(1.+2.5*\n, 0.) -- (2.+2.5*\n, 0.) -- (2.3090169943749475+2.5*\n, 0.9510565162951532) -- (1.5+2.5*\n, 1.5388417685876266) -- (0.6909830056250527+2.5*\n, 0.9510565162951536) -- cycle;
}

\draw [line width=1.0pt] (1.,0.)-- (2.,0.) -- (2.3090169943749475,0.9510565162951532)-- (1.5,1.5388417685876266) -- (0.6909830056250527,0.9510565162951536)-- cycle;

\draw [shift={(0, 0.05)}, line width=1.0pt]  (2. + 2.5, 0.) -- (1. + 2.5, 0.)-- cycle;
\draw [shift={(0, -0.05)}, line width=1.0pt]  (2. + 2.5, 0.) -- (1. + 2.5, 0.)-- cycle;
\draw [line width=1.0pt]  (1.5 + 2.5, 1.5388417685876266) -- (0.6909830056250527 + 2.5, 0.9510565162951536) -- (2.3090169943749475 + 2.5, 0.9510565162951532)-- cycle;

\draw [line width=1.0pt] (2. + 5, 0.) -- (2.3090169943749475 + 5, 0.9510565162951532)-- (1.5 + 5, 1.5388417685876266) -- cycle;
\draw [shift={(-0.05, 0)}, line width=1.0pt] (1. + 5, 0.)-- (0.6909830056250527 + 5, 0.9510565162951536)-- cycle;
\draw [shift={(0.05,0)}, line width=1.0pt] (1. + 5, 0.)-- (0.6909830056250527 + 5, 0.9510565162951536)-- cycle;

\draw [line width=1.0pt] (1. + 7.5, 0.)-- (2. + 7.5, 0.) -- (2.3090169943749475 + 7.5, 0.9510565162951532)-- cycle;
\draw [shift={(0, 0.05)}, line width=1.0pt] (0.6909830056250527 + 7.5, 0.9510565162951536) --(1.5 + 7.5, 1.5388417685876266) -- cycle;
\draw [shift={(0, -0.05)}, line width=1.0pt] (0.6909830056250527 + 7.5, 0.9510565162951536) --(1.5 + 7.5, 1.5388417685876266) -- cycle;

\draw [line width=1.0pt] (1. + 10, 0.)-- (2. + 10, 0.) -- (0.6909830056250527 + 10, 0.9510565162951536) --cycle;
\draw [shift={(0, -0.05)}, line width=1.0pt] (1.5 + 10, 1.5388417685876266)  --(2.3090169943749475 + 10, 0.9510565162951532) -- cycle;
\draw [shift={(0, 0.05)}, line width=1.0pt] (1.5 + 10, 1.5388417685876266)  --(2.3090169943749475 + 10, 0.9510565162951532) -- cycle;

\draw [shift={(-0.05, 0)}, line width=1.0pt] (2.3090169943749475 + 12.5, 0.9510565162951532)-- (2. + 12.5, 0.) -- cycle;
\draw [shift={(0.05, 0)}, line width=1.0pt] (2.3090169943749475 + 12.5, 0.9510565162951532)-- (2. + 12.5, 0.) -- cycle;
\draw [line width=1.0pt] (1. + 12.5, 0.) -- (1.5 + 12.5, 1.5388417685876266) -- (0.6909830056250527 + 12.5, 0.9510565162951536)-- cycle;
\begin{scriptsize}
\foreach \n in {0,...,5} {
\fill   (1.5+2.5*\n,1.5388417685876266) circle (1.5pt);
      \draw (1.5+2.5*\n, 1.8) node {$1$};
\fill   (2.3090169943749475+2.5*\n,0.9510565162951532) circle (1.5pt);
      \draw (2.5+2.5*\n, 1.0) node {$2$};
\fill   (2.+2.5*\n,0.) circle (1.5pt);
      \draw (2.15+2.5*\n, -0.2) node {$3$};
\fill   (1.+2.5*\n,0.) circle (1.5pt);
      \draw (0.9+2.5*\n, -0.2) node {$4$};
\fill   (0.6909830056250527+2.5*\n,0.9510565162951536) circle (1.5pt);
      \draw (0.5+2.5*\n, 1.0) node {$5$};
}
\end{scriptsize}

\end{tikzpicture}
    \caption{The six non-crossing graphs of degree $(2,2,2,2,2)$ on the pentagon.}
    \label{fig:six-pentagon-monomials}
\end{figure}

If $n=6$ then there are $5$ non-crossing perfect matchings on the $6$ vertices of a regular hexagon which produce $5$ monomials $m_1, \ldots, m_5$ that generate $R$, see~\Cref{fig:m0-to-m6}.
When $n=8$ there are $14$ non-crossing perfect matchings on the $8$ vertices of a regular octagon. 
    See~\cite[Figure 7]{HMSVDuke2009}.
\end{example}

\begin{definition} \label{def:g vector}
Let $g = (m_1, \ldots, m_t)$ be a vector of invariants spanning $R_{\ones}$ ($n$ even) or $R_{\twos }$ ($n$ odd).
\end{definition}

The vector $g$ can be used to distinguish between two different $\SL(2)$-orbits of $(\PP^1)^n$, i.e., two different points of 
$\M$, as in the following lemma.

\begin{lemma} \label{lem:signatures of orbits}
Given two sets, $\Pcal$ and $\Qcal$, of $n$ labeled generic points in $\PP^1$, we have that
 $$g(P) := (m_1(P), \ldots, m_t(P)) \sim g(Q) := (m_1(Q), \ldots, m_t(Q))$$ for any of representatives $P$ of $\Pcal$ and 
$Q$ of $\Qcal$ if and only if $\Pcal \simeq \Qcal$, i.e. $\mathcal{P}$ and $\mathcal{Q}$ lie in the same $\SL(2)$-orbit of $(\PP^1)^n$.
\end{lemma}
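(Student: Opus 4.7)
The plan is to prove the two implications separately. The forward direction follows directly from $\SL(2)$-invariance of the brackets: if $Q = HP$ for some $H \in \SL(2)$ and some choice of representatives, then since each generator $m_k$ is a product of $\SL(2)$-invariant brackets, we have $m_k(HP) = m_k(P)$ for all $k$. Hence $g(P) = g(Q)$ as vectors in $\CC^t$, and in particular $g(P) \sim g(Q)$.

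For the reverse direction, my plan is to interpret $g$ as a well-defined morphism $\bar g : \M \to \PP^{t-1}$. First I would check that $[g(P)] \in \PP^{t-1}$ depends only on $\Pcal$, not on the choice of representative: rescaling the $i$-th column of $P$ by $\lambda_i$ multiplies every $m_k(P)$ by the common factor $\prod_i \lambda_i^{d_0}$, where $d_0 \in \{1,2\}$ is the shared multidegree of the $m_k$ given by~\Cref{thm:even odd generators}. Since this factor is independent of $k$, the projective class $[g(P)]$ is preserved, and combining this with the $\SL(2)$-invariance of each $m_k$ produces the morphism $\bar g$.

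The crux is to show that $\bar g$ is injective on generic tuples. Here I would invoke~\Cref{thm:even odd generators} again: since $R$ is generated as a $\CC$-algebra by $R_{d_0 \ones}$, every invariant $f \in R_{d d_0 \ones}$ can be written as a polynomial $F(m_1, \ldots, m_t)$ homogeneous of degree $d$. Thus if $g(P) = \mu\, g(Q)$ for some nonzero scalar $\mu$, then $f(P) = \mu^d f(Q)$ for every such $f$, so the ratios of any two invariants of the same multidegree agree at $P$ and $Q$. Since the invariant ring $R$ separates closed $\SL(2)$-orbits --- this is the defining property of the GIT quotient $\M$ --- and generic tuples in $(\PP^1)^n$ lie in the stable locus where orbits are closed, we conclude $\Pcal \simeq \Qcal$. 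The main obstacle in the argument is this last step: one must ensure that the generic hypothesis places $\Pcal, \Qcal$ in the stable locus, where distinct orbits are genuinely separated by global invariants; this is standard for labeled points in $\PP^1$ and is the sole reason the word ``generic'' appears in the statement.
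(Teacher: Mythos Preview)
Your proposal is correct, but the converse takes a genuinely different route from the paper. The paper's argument is elementary and explicit: from $g(P)\sim g(Q)$ it extracts equality of particular cross-ratios such as $[12][34]/[13][24]$ (realised as a ratio $m_\Gamma/m_{\Gamma'}$ of two entries of $g$), uses the cross-ratio equality on indices $1,2,3,4$ to build a homography $H$ with $Hp_i\sim q_i$ for $i\le 4$, and then for each $i>4$ compares the cross-ratio $m_{123i}$ at $P$ and $Q$, together with invariance of cross-ratio under $H$, to force $Hp_i\sim q_i$.

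Your argument instead invokes the GIT quotient directly: since the $m_k$ generate $R$ as an algebra (Kempe), the relation $g(P)\sim g(Q)$ forces every homogeneous invariant to agree up to the appropriate power of a single scalar, so $P$ and $Q$ map to the same point of $\M$; for generic (hence stable) configurations this means the orbits coincide.

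Both are valid. The paper's approach is self-contained and re-uses cross-ratio facts already needed later in the article; yours is shorter and more conceptual but leans on GIT facts (that invariants separate stable orbits, and that generic $n$-tuples in $(\PP^1)^n$ are stable) which the paper merely cites without developing. One small organisational remark on your forward direction: the lemma asserts $g(P)\sim g(Q)$ for \emph{any} representatives, so you need both the $\SL(2)$-invariance step and the column-rescaling step; you supply both, but the rescaling argument currently appears only in your second paragraph as part of the setup for the converse.
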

\begin{proof}
It suffices to prove the result when $m_1, \ldots,  m_t$ are all the bracket monomials of lowest degree, since the conclusion will then follow for invariants with the same linear span.

Suppose $\Pcal \simeq \Qcal$. For any representatives $P$ and $Q$, there exists $H \in \SL(2)$ and scalars $\lambda_i$ with $q_i = \lambda_i H p_i$ for all $i=1, \ldots, n$. 
Set $\lambda = \lambda_1 \cdots \lambda_n.$
If $n$ is even, each $m_i$ comes from a perfect matching on $[n]$ with $n/2$ edges and therefore, $m_i(Q) = m_i(\lambda H P) = \lambda \det(H)^{\frac{n}{2}} m_i(P)$ which means that $g(P) \sim g(Q)$. If $n$ is odd, then 
each $m_i$ has degree $\twos$. Therefore, $m_i(Q) = m_i(\lambda H P) = \lambda^2 \det(H)^{n} m_i(P)$, and again, $g(P) \sim g(Q)$.

To prove the converse, suppose $g(P) \sim g(Q)$ for representatives 
$P$ and $Q$ of $\Pcal$ and $\Qcal$, where $\mathcal{P}$ and $\mathcal{Q}$ are sufficiently generic.
Let $m_{\Gamma_{1234}} $ be the bracket monomial corresponding to a graph $\Gamma_{1234}$ of minimal degree such that $\overrightarrow{12}, \overrightarrow{34} \in E(\Gamma_{1234})$, and $\overrightarrow{13}, \overrightarrow{24} \not \in E (\Gamma_{1234} ).$ 
Let $\Gamma_{1234} '$ be the graph obtained from $\Gamma_{1234} $ by replacing the edges $\overrightarrow{12}, \overrightarrow{34}$ with $\overrightarrow{13}, \overrightarrow{24}$.
Then $$m_{1234} := m_{\Gamma_{1234} } / m_{\Gamma_{1234} '} = \displaystyle\frac{[12][34]}{[13][24]}$$ is the cross-ratio of points $1,\ldots ,4 $, which (by genericity) we may assume is defined and nonzero on $\mathcal{P}$ and $\mathcal{Q}$.
Note that $m_{1234}|_{\mathcal{P}} = m_{1234}|_{\mathcal{Q}}$, since $g(P) \sim g(Q).$
By genericity, there exists a  homography $H\in \PGL_2$ such that $H p_i \sim q_i$ for $i=1,\ldots , 4$---see~\eqref{eq:Hx}~\eqref{eq:Hy}.

Similarly, from a suitably chosen graph $\Gamma_{123i},$ construct the cross ratio $m_{123i}$ where $i>4,$ and observe that $m_{123i}|_\mathcal{P}=m_{123i}|_{\mathcal{Q}}$. 
Since the cross-ratio is invariant under homography, we have 
\[
\displaystyle\frac{\det \begin{bmatrix} q_1 & q_2 \end{bmatrix} \det \begin{bmatrix} q_3 & H p_i \end{bmatrix}}{\det \begin{bmatrix} q_1 & q_3 \end{bmatrix} \det \begin{bmatrix} q_2 & H p_i \end{bmatrix}}
=
\displaystyle\frac{\det \begin{bmatrix} Hp_1 & H p_2 \end{bmatrix} \det \begin{bmatrix} H p_3 & H p_i \end{bmatrix}}{\det \begin{bmatrix} H p_1 & H p_3 \end{bmatrix} \det \begin{bmatrix} H p_2 & H p_i \end{bmatrix}}
=
\displaystyle\frac{\det \begin{bmatrix} q_1 & q_2 \end{bmatrix} \det \begin{bmatrix} q_3 & q_i \end{bmatrix}}{\det \begin{bmatrix} q_1 & q_3 \end{bmatrix} \det \begin{bmatrix} q_2 & q_i \end{bmatrix}},
\]
where the last inequality follows by hypothesis. This rearranges to
\[
\displaystyle\frac{\det \begin{bmatrix} q_2 & q_i \end{bmatrix} \det \begin{bmatrix} q_3 & H p_i \end{bmatrix}}{\det \begin{bmatrix} q_3 & q_i \end{bmatrix} \det \begin{bmatrix} q_2 & H p_i \end{bmatrix}} = 1.
\]
Since the cross ratio of four points equals $1$ only when two points coincide, we deduce that $H p_i \sim q_i$ for all $i>4$, and hence $\mathcal{P} \simeq \mathcal{Q}.$
\end{proof}

We now introduce the tool that will allow us to 
describe the locus of $(a,b)$ by {\em pulling back} brackets from $\PP^1$ to $\PP^2$. 
For clarity, index a 
bracket with $p$ (respectively, $x$) to denote its evaluation at $\Pcal$ (respectively, $\X$). 

\begin{lemma} \label{lem:pullback}
Given a labeled set $\X = \{x_1,\ldots, x_n \} \subset \PP^2$ and a flatland camera $A:\PP^2 \dashrightarrow \PP^1$ with center $a$, let $p_i:=Ax_i$. Then for any collection of two-element subsets   $\{i_1,j_1\}, \{i_2,j_2\}, \ldots,\{i_m,j_m\} \subset[n]$, the following equality holds:
\begin{equation}\label{eq:invariant theory projection}
([i_1 \,\, j_1]_p,\ldots,[i_m \,\, j_m]_p)\sim([i_1 \,\, j_1 \, \,a]_x,\ldots,[i_m \,\, j_m \, \,a]_x).
\end{equation}
\end{lemma}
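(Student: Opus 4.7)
The plan is to reduce the lemma to a single identity relating the $2 \times 2$ bracket $[ij]_p$ on $\PP^1$ with the $3 \times 3$ bracket $[ij\,a]_x$ on $\PP^2$, and then observe that the proportionality constant arising does not depend on the index pair $\{i,j\}$. Since projective equivalence $\sim$ of the tuples only requires a common nonzero scalar, this is enough.

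First I would fix representatives. Write the flatland camera as
\[
A = \begin{bmatrix} \mathbf{a}_1^\top \\ \mathbf{a}_2^\top \end{bmatrix},
\]
so that $p_i = A x_i$ has coordinates $(\mathbf{a}_1^\top x_i, \mathbf{a}_2^\top x_i)$. Then the bracket on $\PP^1$ unfolds as
\[
[ij]_p \;=\; \det\begin{bmatrix} \mathbf{a}_1^\top x_i & \mathbf{a}_1^\top x_j \\ \mathbf{a}_2^\top x_i & \mathbf{a}_2^\top x_j \end{bmatrix} \;=\; (\mathbf{a}_1^\top x_i)(\mathbf{a}_2^\top x_j) - (\mathbf{a}_1^\top x_j)(\mathbf{a}_2^\top x_i).
\]
Since $\rank(A)=2$, the center $a$ spans $\ker A$, so we may choose the representative $a = \mathbf{a}_1 \times \mathbf{a}_2 \in \RR^3$ (which is nonzero and annihilated by both rows of $A$).

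Next I would invoke the classical Lagrange identity
\[
(\mathbf{a}_1 \times \mathbf{a}_2) \cdot (x_i \times x_j) \;=\; (\mathbf{a}_1^\top x_i)(\mathbf{a}_2^\top x_j) - (\mathbf{a}_1^\top x_j)(\mathbf{a}_2^\top x_i),
\]
whose right-hand side is precisely $[ij]_p$ as computed above. The left-hand side, by the scalar triple product, equals $\det[\,x_i \mid x_j \mid \mathbf{a}_1 \times \mathbf{a}_2\,] = [ij\,a]_x$ for the chosen representative of $a$. Thus with this choice of homogeneous coordinates we obtain the pointwise equality
\[
[ij]_p \;=\; [ij\,a]_x \qquad \text{for every } \{i,j\}.
\]

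Finally I would address projectivity. Any other representative of the center in $\PP^2$ has the form $\mu\, a$ for some nonzero scalar $\mu$, in which case $[ij\,a]_x$ scales by $\mu$ uniformly in $\{i,j\}$; similarly, replacing $A$ by $HA$ with $H\in\SL(2)$ (or allowing rescaling of the $p_i$) multiplies every $[ij]_p$ by a common scalar. Consequently the two $m$-tuples differ by a single global nonzero factor, which is exactly the relation $\sim$ in~\eqref{eq:invariant theory projection}. There is no real obstacle here beyond correctly bookkeeping this overall scalar; the content of the lemma is entirely carried by the Lagrange identity applied termwise to each pair $\{i_k,j_k\}$.
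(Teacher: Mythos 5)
Your proof is correct, and it establishes the same underlying identity as the paper --- that the $\PP^2$ bracket with the camera center adjoined agrees with the $\PP^1$ bracket of the images --- but by a somewhat different execution. The paper reduces to $m=2$, normalizes coordinates so that $A$ is the projection onto the first two coordinates (hence $a\sim e_3$), and observes that the $3\times 3$ determinant then visibly collapses to the $2\times 2$ one; the ``up to change of coordinates'' step tacitly relies on the fact that homographies of $\PP^2$ and $\PP^1$ rescale all brackets by a common factor. You instead argue coordinate-freely: with the explicit kernel representative $a=\mathbf{a}_1\times\mathbf{a}_2$, the Binet--Cauchy (Lagrange) identity combined with the scalar triple product (and the evenness of the cyclic column permutation, so no sign appears) gives the exact termwise equality $[ij]_p=[ij\,a]_x$ for every pair simultaneously and for an arbitrary rank-$2$ matrix $A$; the only residual freedom --- the scale of the representative of $a$, or an overall rescaling of $A$ --- multiplies the whole tuple by a single scalar, which is all that $\sim$ requires. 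This buys an identity at the level of representatives and dispenses with the normalization bookkeeping. One small caveat: your parenthetical claim that ``rescaling of the $p_i$'' multiplies every $[ij]_p$ by a common scalar is only true for a single global rescaling, since independent rescalings $p_i\mapsto\lambda_i p_i$ scale $[ij]_p$ by $\lambda_i\lambda_j$; this is harmless here because the lemma fixes the representatives $p_i:=Ax_i$, and changing the chosen representatives of the $x_i$ rescales the corresponding brackets on both sides of your identity identically.
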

\begin{proof}
 It suffices to prove that \eqref{eq:invariant theory projection} holds for $m=2$, and the brackets $[12]$ and $[34]$. For ease of notation, fix an affine representation $(u_i,v_i,w_i)\sim x_i$ for $i=1,2,3,4$. We can assume up to change of coordinates on $\PP^2$ and $\PP^1$ that
\begin{equation}
    A=\begin{bmatrix}1 & 0 & 0\\ 0 & 1 & 0\end{bmatrix}
\end{equation}
which means that $Ax_i \sim (u_i,v_i)^\top$.
Then calculate that
\begin{equation}
([12a]_x,[34a]_x)\sim \left(\det \begin{bmatrix}u_1 & u_2 & 0\\ v_1 & v_2 & 0\\ w_1 & w_2 & 1\end{bmatrix}, \det \begin{bmatrix}u_3 & u_4 & 0\\ v_3 & v_4 & 0\\ w_3& w_4 & 1\end{bmatrix} \right)\sim([12]_p,[34]_p).
\end{equation}
\end{proof}

\begin{definition} \label{def:g' vector} 
Recall the vector $g = (m_1, \ldots, m_t)$ from 
Definition~\ref{def:g vector}. 
If there is a flatland camera $\PP^2 \dashrightarrow \PP^1$ with center $a$ that maps $x_i \in \X \mapsto p_i$, then let $g'$ denote the vector obtained from $g$ by replacing every bracket $[ij]$ in each monomial $m_k$ in $g$ with the bracket $[ija]$ where $i,j$ now correspond to $x_i$ and $x_j$. Denote the evaluation of $g'$ at $\X, a$ by 
$g'(\X,a)$.
\end{definition}

From Lemma~\ref{lem:signatures of orbits} and Lemma~\ref{lem:pullback} we obtain the following central tool for answering Question~\ref{question:q2}.

\begin{theorem}\label{thm:pull back tool}
$\ORBIT{X}{a} = \ORBIT{Y}{b}$ if and only if $g'(\X, a) \sim g'(\Y,b)$.
\end{theorem}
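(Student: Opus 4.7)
The plan is to assemble the theorem directly from the two preceding lemmas (\Cref{lem:signatures of orbits} and \Cref{lem:pullback}), with only a small bookkeeping step to reconcile the scaling factors that appear in the pullback statement. The key observation that ties the two lemmas together is that for any flatland camera $A$ with center $a$ projecting $x_i \mapsto p_i$, evaluating the vector $g$ at $\Pcal=\{p_i\}$ gives exactly (a scalar multiple of) $g'(\X,a)$.

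First, I would invoke \Cref{lem:pullback} with the family of two-element subsets $\{i,j\}$ ranging over \emph{all} brackets $[ij]$ appearing in any of the monomials $m_1,\dots,m_t$ simultaneously. The lemma then furnishes a single nonzero scalar $\mu$ such that $[ij]_p = \mu\,[ija]_x$ for every bracket in play. Because the generating monomials all lie in the same graded piece ($R_\ones$ when $n$ is even, $R_\twos$ when $n$ is odd), each $m_k$ is a product of the same number $d$ of brackets, so $m_k(\Pcal) = \mu^d\, m_k'(\X,a)$ for every $k$ simultaneously. This yields the crucial scaling identity
\[
g(\Pcal) \sim g'(\X,a),
\]
and analogously $g(\Qcal)\sim g'(\Y,b)$ for any camera $B$ with center $b$ and $\Qcal=\{By_i\}$.

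With this identity in hand, both directions of the theorem reduce to \Cref{lem:signatures of orbits}. For the forward direction, if $\ORBIT{X}{a}=\ORBIT{Y}{b}$, then choosing any cameras $A,B$ with the prescribed centers gives $\Pcal\simeq\Qcal$ in $(\PP^1)^n$, hence $g(\Pcal)\sim g(\Qcal)$ by \Cref{lem:signatures of orbits}, and concatenating with the scaling identities produces $g'(\X,a)\sim g'(\Y,b)$. For the reverse direction, pick any cameras $A,B$ with centers $a,b$; the hypothesis $g'(\X,a)\sim g'(\Y,b)$ combined with the scaling identities gives $g(\Pcal)\sim g(\Qcal)$, so \Cref{lem:signatures of orbits} provides $\Pcal\simeq\Qcal$, which is exactly the statement that the orbits $\ORBIT{X}{a}$ and $\ORBIT{Y}{b}$ coincide.

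The only subtlety, and where the argument might feel fragile, is in the implicit genericity hypothesis required by \Cref{lem:signatures of orbits}: we need $\Pcal$ and $\Qcal$ to be generic in $(\PP^1)^n$ in order to conclude $\Pcal\simeq\Qcal$ from equality of invariants. This is where the assumption that $\X$ and $\Y$ are generic (inherited from \Cref{question:q2}) enters, since a flatland camera is surjective onto $\PP^1$ and multiplication on the left by an element of $\GL(2)$ preserves genericity of the image tuple. Apart from this point, the proof is essentially two applications of previously established lemmas joined by the observation that pulling back compatible brackets scales all monomials of a common degree by a common scalar.
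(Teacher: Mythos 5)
Your proposal is correct and follows the same route the paper intends: the paper offers no written proof, stating only that the theorem follows from \Cref{lem:signatures of orbits} and \Cref{lem:pullback}, and your argument simply fills in the routine details (a single pullback scalar raised to the common bracket-degree of the monomials in $g$, plus the standing genericity assumption of \Cref{question:q2} needed for the converse direction of \Cref{lem:signatures of orbits}). No gaps beyond what the paper itself leaves implicit.
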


The equality $g'(\X, a) \sim g'(\Y,b)$ will give us the equations 
for the centers $(a,b)$ of cameras that can send $\X$ and $\Y$ to the same $\PP^1$-image. 

\section{The camera centers variety}\label{sec:epv}

In this section, we introduce the camera centers variety, a second tool that casts~\Cref{question:q2} in the language of algebraic geometry. 
To make convenient use of this language, we temporarily work in this section over the field of complex numbers; we note, however, that the explicit nature of our geometric constructions in the following sections give us many of the same results over the reals.
As we will see, the tools from invariant theory and the camera centers variety complement each other in our characterization of the loci of camera centers. 

It will be helpful to mark separately two projective planes: $\PP^2_x$ containing $\mathcal{X}$ and the center $a$ of the flatland camera $A$, and $\PP_y^2$ containing $\mathcal{Y}$ and the center $b$ of the flatland camera $B$. Informally, the camera centers variety $\EPV{n}$ consists of all $(a,b)\in \PP_x^2 \times \PP_y^2$ consistent with two sets of $n$ labeled points.
Importantly, this variety comes equipped with projections into $\PP^2_x$ and $\PP^2_y$:
$$
\begin{tikzcd}
 & \EPV{n} \ni (a,b)  \arrow{dr}{\pi_y} \arrow{ld}[swap]{\pi_x}  \\
a \in \PP^2_x && b \in \PP^2_y
\end{tikzcd}
$$
These projections allow us to answer 
questions about the locus of flatland camera centers. 
For example, if we fix $a \in \PP_x^2$, then $\pi_x^{-1}(a)$ (the fiber over $a$) is the locus of corresponding $b\in \PP_y^2$. 

Recall from~\Cref{thm:fundamental-equivalence} that the centers of projection $a$ and $b$ of $A$ and $B$ span, respectively, the right and left nullspace of the fundamental matrix $F$.
Since $F$ is determined only up to scale, we regard it as a point in the projective space of all nonzero $3\times 3$ matrices $\PP(\CC^{3 \times 3}) \cong \PP^8.$

Fix two sets of labeled points $\X $ and $\Y$.
To understand how $\X$ and $\Y$ constrain the projections in~\Cref{thm:fundamental-equivalence}, let us consider the subvariety  
$\FE{\X ,\Y} \subset  \PP^8 \times \PP_x^2 \times \PP_y^2$ of all $(F,a,b)$ such that
\begin{equation}\label{eq:epv-equations}
Fa =0,
 \quad 
 Fb = 0,
 \quad 
 \forall i \, \, 
    y_i^\top F x_i = 0.
\end{equation}
We define the \emph{camera centers variety} $\EPV{\X ,\Y} \subset \PP_x^2 \times \PP_y^2$ associated to the pair $(\X, \Y)$, to be the image of $\FE{\X ,\Y}$ under the coordinate projection $ \PP^8 \times \PP_x^2 \times \PP_y^2 \dashrightarrow \PP_x^2 \times \PP_y^2$. 

Now fixing $n$, but varying the points in $\X$ and $\Y$, we may think of $\FE{\X ,\Y}$ and $\EPV{\X ,\Y} $ as defining \emph{families} of varieties.
More formally, they arise as the fibers of certain projection maps.
For example, the varieties $\FE{\X ,\Y}$ arise as the fibers of a map
\begin{align}
\pi_{\FE{}} : V_n  &\to (\PP_x^2 \times \PP_y^2)^n \nonumber \\
 (F, a, b, x_1, y_1, \ldots , x_n , y_n ) &\mapsto (x_1, y_1, \ldots , x_n, y_n), \label{eq:FE-projection}
\end{align}
where $V_n \subset \PP^8 \times  (\PP_x^2 \times \PP_y^2)^{n+1}$ is a variety where equations~\eqref{eq:epv-equations} are all satisfied.
For technical reasons, we will define $V_n$ so as to ensure that it is \emph{irreducible}.
To do so, we define 
\[
\mathcal{U} = \{ (F, x_1, \ldots , x_n) \in \PP^8 \times \left( \PP_x^2 \right)^n \mid \rank (F) =2, \,  \forall i \,  F x_i \ne 0 \}.
\]
For any fixed $(F, x_1, \ldots , x_n) \in \mathcal{U},$ equations~\eqref{eq:epv-equations} define a product of projective linear spaces in $(a,b,y)$ of dimension $n.$
We define $V_n$ to be the Zariski closure of the set of all $(F, a, b, x, y) \in  \PP^8 \times  (\PP_x^2 \times \PP_y^2)^{n+1}$ such that $(F, x_1, \ldots , x_n) \in \mathcal{U}$ and equations~\eqref{eq:epv-equations}  hold.

Let $\mathcal{I} (V_n)$ denote the vanishing ideal of the variety $V_n.$ 
We may regard $\mathcal{I} (V_n)$ as a prime ideal in the polynomial ring  $S[F, a,b],$ where $S = \CC [ x_1, y_1, \ldots , x_n, y_n].$ 
Let $G = \{ g_1, \ldots , g_s \} $ be a Gr\"{o}bner basis for this ideal with respect to some monomial order $<$ with  $\operatorname{LC}_<(g_i)$ the leading term of $g_i$.
Set $u = \operatorname{LC}_< (g_1) \, \cdots  \, \operatorname{LC}_< (g_s),$ and consider the multiplicatively-closed set 
\begin{equation}\label{eq:generic-freeness}
U = \{ u^k \}_{k\ge 1} \subset S.
\end{equation}
If $n\le 7$, then~\Cref{lemma:dim-En} implies that the map $\pi_{\FE{}}$ is surjective. 
The \emph{generic freeness lemma}, as stated in~\cite[Lemma 10.1]{Kemper2011}, implies that the localized coordinate ring $U^{-1} (S[F,a,b] / \mathcal{I} (V_n)) $ is a free module over the localized polynomial ring $U^{-1} \CC [ x_1, y_1, \ldots , x_n, y_n]$.
This has the following practical consequence: for any specific values of $\X $ and $\Y$ such that $u(\X , \Y ) \ne 0,$ a Gr\"{o}bner basis for $\mathcal{I}(\FE{\X ,\Y})$ can be obtained by specializing $G$ to these specific values. 

In a similar manner, the camera centers varieties $\EPV{\X ,\Y}$ form a family over $\left(\PP_x^2 \times \PP_y^2 \right)^n.$
Generic freeness then implies that several important invariants of these varieties are \emph{constant} over a dense Zariski-open set $\mathcal{U}' \subset (\PP^2 \times \PP^2)^n$.
Specifically, we may take $\mathcal{U}'=\{ (\X , \Y ) \mid u (\X , \Y ) \ne 0 \}$ with $u$ as in~\eqref{eq:generic-freeness}.
For any $(\X, \Y ) \in \mathcal{U}'$, the following invariants of $\EPV{\X,\Y}$ depend only on $n$:
\begin{enumerate}
    \item $\dim (\EPV{\X, \Y})$ (see~\Cref{lemma:dim-En}.)
    \item the \emph{multidegrees} of $\EPV{\X, \Y} \subset \PP_x^2 \times \PP_y^2$ (see~\Cref{def:multidegrees}), and
    \item the bigraded pieces of $\mathcal{I} (\EPV{\X,\Y})$, the vanishing ideal of $\EPV{\X,\Y}$, i.e., for any $j,k \in \ZZ ,$ the dimensions of the vector spaces
    \[
\{
f (a, b) \mid \deg_a (f) = j, \, \, \deg_b (f) = k, \, \, f(a,b) = 0 \, \, \forall (a,b) \in \EPV{\X, \Y}\}.
    \]
\end{enumerate}
This discussion justifies the following abuse of notation: we will write $\FE{n}$ and $\EPV{n}$ to denote any one of the varieties $\FE{\X , \Y}$ and $\EPV{\X , \Y}$ associated to sufficiently generic sets of $n$ points  $\X, \Y$.  

Our next result is a formula for $\dim (\EPV{n}).$

\begin{lemma}\label{lemma:dim-En}
For $n \le 8,$ $\dim \FE{n} = \dim \EPV{n}= \min \left( 4, 7-n\right)$. 
For $n\ge 8,$ $\FE{n} = \EPV{n} = \emptyset .$
\end{lemma}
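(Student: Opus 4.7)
Proof Plan:

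The plan is to compute dimensions via the incidence family $V_n \subset \PP^8 \times (\PP_x^2 \times \PP_y^2)^{n+1}$ and its two projections---one onto the data $(\X, \Y)$, and one onto the centers $(a,b)$. The starting point is the dimension count $\dim V_n = 7 + 3n$, which follows directly from the bundle description used to define $V_n$: it is (the closure of) a bundle over the open set $\mathcal{U} \subset \PP^8 \times (\PP_x^2)^n$ with fibers of dimension $n$, and $\dim \mathcal{U} = 7 + 2n$ (seven from the rank-$2$ hypersurface $\{\det F = 0\}$ in $\PP^8$, and $2n$ from the $x_i$).

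For $n \ge 8$, the projection $\pi_{\FE{}} : V_n \to (\PP_x^2 \times \PP_y^2)^n$ cannot be dominant because $\dim V_n = 7 + 3n < 4n$. Hence for generic $(\X, \Y)$ both $\FE{\X, \Y}$ and its image $\EPV{\X, \Y}$ are empty. For $n \le 7$, I would establish dominance of $\pi_{\FE{}}$ by a single explicit example: choose a generic rank-$2$ matrix $F_0$ and generic points $x_i$, then pick each $y_i$ on the epipolar line $\PP(F_0 x_i)$. Once dominance is in hand, the theorem on fiber dimensions yields $\dim \FE{\X, \Y} = 7 - n$ for generic $(\X, \Y)$.

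To compute $\dim \EPV{n}$, I would analyze the projection $\FE{n} \to \EPV{n}$ sending $(F, a, b) \mapsto (a,b)$. For fixed $(a,b) \in \PP_x^2 \times \PP_y^2$, the matrices $F$ with $Fa = F^\top b = 0$ form a three-dimensional linear subspace of $\PP^8$. The $n$ epipolar conditions $y_i^\top F x_i = 0$ are linear in $F$ and, for generic $(\X, \Y)$, cut this subspace in codimension $\min(3,n)$. Thus the generic fiber of $\FE{n} \to \EPV{n}$ has dimension $\max(0, 3-n)$. For $n \ge 4$ the fiber is generically a single point, so $\EPV{n}$ is a proper subvariety cut out by rank conditions on the $n$ linear forms, giving $\dim \EPV{n} = \dim \FE{n} = 7-n$; for $n \le 3$ the epipolar constraints are under-determined, so $\EPV{n} = \PP_x^2 \times \PP_y^2$ and has the maximal dimension $4$. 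In every case this yields $\dim \EPV{n} = \min(4, 7-n)$, matching the stated formula (and matching $\dim \FE{n}$ on the regime $n \ge 3$ where the projection $\FE{n} \to \EPV{n}$ is generically finite).

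The principal obstacle is establishing dominance of $\pi_{\FE{}}$ precisely at the boundary $n = 7$, where a raw dimension count guarantees only that dominance is \emph{possible}. To justify it one must verify, for a single well-chosen example, that the scheme-theoretic fiber is reduced and zero-dimensional---equivalently, that seven generic linear forms $y_i^\top F x_i$ meet the rank-$2$ locus transversely at smooth points. This can be verified directly by linear algebra on a carefully chosen configuration, or it can be absorbed into the generic-freeness framework set up immediately before the lemma, which upgrades a single transverse example to a statement about all $(\X, \Y)$ in a Zariski-dense open set.
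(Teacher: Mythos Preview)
Your proposal is correct and follows essentially the same architecture as the paper's proof: both compute $\dim V_n = 7+3n$, push this through $\pi_{\FE{}}$ to get $\dim \FE{n} = 7-n$ for $n\le 7$, and squeeze $\dim \EPV{n} = 4$ for $n\le 3$. The tactical differences are worth noting. To identify $\dim \FE{n}$ with $\dim \EPV{n}$ for $n\ge 3$, the paper exhibits the explicit formula $F=\diag(a)[a\odot b]_\times\diag(b)$ (valid once three correspondences are normalized to $e_1,e_2,e_3$) together with its Cramer-rule inverse, directly establishing that $(F,a,b)\mapsto(a,b)$ is birational; you instead analyze the generic fiber over the $\PP^3$ of matrices with prescribed one-dimensional kernel and cokernel. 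The paper's formula has the advantage of being reused heavily in later sections (it yields the $n=4$ hypersurface equation and drives the $n=5$ Cremona computation). For $n\ge 8$, the paper argues incrementally from the finiteness of $\FE{7}$ plus a generic eighth point-pair; your direct dimension count $\dim V_n < 4n$ is cleaner. One point to tighten: your codimension claim ``the $n$ epipolar conditions cut this subspace in codimension $\min(3,n)$'' is phrased for fixed $(a,b)$ and generic $(\X,\Y)$, but what you actually need is fixed generic $(\X,\Y)$ and generic $(a,b)\in\EPV{n}$---the quantifier swap is routine via an incidence-variety argument, but should be made explicit.
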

For the proof of~\Cref{lemma:dim-En}, it is helpful to observe that we can use homographies to fix four points in each of the sets $\X$ and $\Y$, assuming the points in each set are sufficiently generic. For
\begin{equation}\label{eq:standard-position}
\X = \{ x_1, x_2, x_3, x_4, \ldots \} ,
\quad 
\Y = \{ y_1, y_2, y_3, y_4, \ldots \},
\end{equation}
we say that $\X $ and $\Y$ are in \emph{standard position} if $x_i \sim e_i$ for $i=1,\ldots , 4,$
where $e_1,e_2,e_3\in \PP^2$ are represented by the standard basis vectors and $e_4\sim e_1 + e_2 + e_3$.
When $\X$ and $\Y$ are generic, they may be transformed into standard position via homographies $H_x, H_y : \PP^2 \to \PP^2$ defined by
\begin{small}
\begin{align}
H_x = H(x_1, \ldots , x_4) &= 
\left(
\begin{bmatrix}
\det \begin{bmatrix}
x_4  & x_2 & x_3
\end{bmatrix}
& 0 & 0 \\
0 & 
\det \begin{bmatrix}
x_1  & x_4 & x_3
\end{bmatrix}
& 0 \\
0 & 0 &
\det \begin{bmatrix}
x_1  & x_2 & x_4
\end{bmatrix}
\end{bmatrix}
\begin{bmatrix}
x_1 & x_2 & x_3 
\end{bmatrix}
\right)^{-1} \label{eq:Hx},\\
H_y = H(y_1, \ldots , y_4) &= 
\left(
\begin{bmatrix}
\det \begin{bmatrix}
y_4  & y_2 & y_3
\end{bmatrix}
& 0 & 0 \\
0 & 
\det \begin{bmatrix}
y_1  & y_4 & y_3
\end{bmatrix}
& 0 \\
0 & 0 &
\det \begin{bmatrix}
y_1  & y_2 & y_4
\end{bmatrix}
\end{bmatrix}
\begin{bmatrix}
y_1 & y_2 & y_3 
\end{bmatrix}
\right)^{-1}. \label{eq:Hy}
\end{align}
\end{small}
If we assume generic data $\X $ and $\Y$, we can always reduce to the case where $\X$ and $\Y$ are in standard position.
Several of our formulas in subsequent sections assume standard position; general formulas may be recovered using the substitutions
\begin{equation}\label{eq:substitutions}
x_i \gets (H_x)^{-1} x_i,
\quad 
y_i \gets (H_y)^{-1} y_i. 
\end{equation}

\begin{proof}[Proof of ~\Cref{lemma:dim-En}]
We first treat the case of $3\le n \le 7.$
We observe that a fundamental matrix is uniquely determined by its left and right epipoles together with $n\ge 3$ generic point pairs.
Indeed, since the equations~\eqref{eq:epv-equations} are linear in $F,$ an explicit formula for $F$ may be computed when $n=3$ using Cramer's rule. 
For $n=3,$ if $x_i = y_i = e_i$ for $i=1,\ldots , 3,$ we may check that
\begin{equation}\label{eq:ab-to-F}
F = \diag (a) [ a \odot b ]_\times \diag (b),
\end{equation}
where $a \odot b$ denotes the Hadamard (entrywise) product and $[a \odot b]_\times $ denotes the $3\times 3$ skew-symmetric matrix representing the cross-product as the linear transformation $x \mapsto (a \odot b) \times x.$
We note that~\eqref{eq:ab-to-F} is a matrix of rank $2$ for general $a$ and $b.$
For generic $(x_1,y_1, \ldots , x_n, y_n),$ the expression~\eqref{eq:ab-to-F} may be transformed by homographies sending $x_i, y_i \to e_i$; indeed, we may use the explicit homography formulas~\eqref{eq:Hx} and~\eqref{eq:Hy} after choosing generic $x_4, y_4 \in \PP^2.$
In the other direction, $a$ and $b$ may be recovered from $F$ uniquely---explicit formulas follow again from Cramer's rule.
We conclude
\[
\dim (\FE{n}) = \dim (\EPV{n}) \quad \forall n \ge 3.
\]
Now, using the map~\eqref{eq:FE-projection}, since $n\le 7$, we have that
\[
\dim (\FE{n}) = \dim (V_n ) - \dim \left( (\PP^2 \times \PP^2)^n \right)
= 3n + 7 - 4n = 7 - n.
\]
Now, for $n\le 3,$ we have 
\[
4 = \dim (\EPV{3}) \le \dim (\EPV{n}) \le \dim (\PP^2 \times \PP^2) = 4.
\]
It remains to note that $\FE{n}$ and $\EPV{n}$ are empty for $n\ge 8$.
Our proof thus far implies that $\FE{7}$ consists of finitely many points $(F,a,b)$.
For generic $(x_8, y_8) \in \PP^2 \times \PP^2$ we have that $y_8^T F x_8 \ne 0$ for all such $F$. This proves emptiness for all $n\ge 8.$
\end{proof}

\begin{corollary}
\label{cor:n=8 loci invariants}
    Let $\X = \{x_1,\ldots, x_n\} \subset \PP^2_x$ and $\Y = \{y_1, \ldots, y_n\} \subset \PP^2_y$ be two sets of generic labeled points with $n  \ge 8$, then there does not exist a camera pair $(A,B)$ that will project $\mathcal{X}$ and $\mathcal{Y}$ to the same $\PP^1$ image.
\end{corollary}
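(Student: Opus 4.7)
The proof I have in mind is short and direct: the corollary is essentially a rephrasing of the emptiness part of \Cref{lemma:dim-En}, once we recall the equivalence provided by \Cref{thm:fundamental-equivalence}.

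First, I would recall that by \Cref{thm:fundamental-equivalence}, the existence of a flatland camera pair $(A,B)$ with centers $(a,b)$ satisfying $A x_i \sim B y_i$ for all $i$ is equivalent to the existence of a rank-$2$ fundamental matrix $F$ with
\[
Fa = 0, \quad F^\top b = 0, \quad y_i^\top F x_i = 0 \quad \forall i .
\]
In particular, any such triple $(F,a,b)$ is a point of $\FE{\X,\Y}$, and its projection $(a,b)$ lies in $\EPV{\X,\Y}$. Conversely, if $\EPV{\X,\Y} = \emptyset$ then no rank-$2$ $F$ can satisfy the above equations (since the left/right nullspaces of a rank-$2$ matrix furnish an element of $\EPV{\X,\Y}$), so no such camera pair exists.

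Second, I would invoke \Cref{lemma:dim-En}, which asserts $\FE{n} = \EPV{n} = \emptyset$ for all $n \ge 8$. The generic freeness discussion preceding the lemma provides an explicit Zariski-open set $\mathcal{U}' \subset (\PP^2 \times \PP^2)^n$ on which the fiber $\EPV{\X,\Y}$ coincides with the generic model $\EPV{n}$. For any $(\X, \Y) \in \mathcal{U}'$ we therefore have $\EPV{\X,\Y} = \emptyset$, and combined with the first step this rules out any camera pair $(A,B)$.

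There is essentially no obstacle, since both ingredients are already in place. The one minor point worth noting is that the notion of ``generic'' appearing in the statement of the corollary must be taken to mean ``lying in $\mathcal{U}'$'', which is built into the setup of \Cref{sec:epv}. For $n > 8$, no additional work is required: any configuration witnessing a reconstruction for $n$ points restricts to a valid witness for every $8$-point subcollection, so emptiness at $n = 8$ propagates to all larger $n$ by contradiction.
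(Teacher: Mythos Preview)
Your proposal is correct and follows essentially the same route as the paper: both deduce the corollary directly from the emptiness assertion in \Cref{lemma:dim-En}, with your write-up simply making explicit the link via \Cref{thm:fundamental-equivalence} between camera pairs and points of $\EPV{\X,\Y}$, and spelling out the meaning of ``generic'' and the reduction from $n>8$ to $n=8$.
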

\begin{proof}
    The proof follows from~\Cref{lemma:dim-En} where it was shown that $n\ge 8,$ $\FE{n} = \EPV{n} = \emptyset .$
\end{proof}
Having determined the dimension of $\EPV{n}$, we briefly recall the next-most important invariant of any variety embedded in a product of projective space: its set of \emph{multidegrees}.

\begin{definition}\label{def:multidegrees}
    The multidegree $d_{i,j}$ of $\EPV{n}\subset \PP_x^2 \times \PP_y^2$ is the number of complex points of intersection of $\EPV{n}$ with $i$ generic hyperplanes (lines) in 
    $\PP^2_x$ and $j$ generic hyperplanes (lines) in $\PP^2_y$ where $i+j = \dim{\EPV{n}}$.
    The multiset of all multidegrees of $\EPV{n}$ is $ \left\{ d_{(i,j)} \,:\, i + j = \dim(\EPV{n}), \, \, d_{(i,j)} > 0 \right\}$.
\end{definition}

The multidegrees of $\EPV{n}$ when $4\le n \le 7$ will further inform our study of camera centers.

Overall, the camera centers variety provides a useful tool that is complementary to the invariant-theoretic tools of the preceding section.
We may now begin to answer~\Cref{question:q2}. 

\section{Camera loci when $n=4$}\label{sec:n-4}
The first non-trivial case for Question~\ref{question:q2} is that of $n=4$. Indeed, since any set of $n \leq 3$ points 
in $\PP^1$ can be sent to any other set of $n \leq 3$ points by a homography, for any pair of cameras $A$ and $B$ we always have 
a homography $H$ such that $\forall i$, 
$HAx_i \sim By_i$.

\begin{theorem} \label{thm:n=4 loci invariants}
    Let $\X = \{x_1,\ldots, x_4\} \subset \PP^2_x$ and $\Y = \{y_1, \ldots, y_4\} \subset \PP^2_y$ be two sets of generic labeled points that can be imaged by flatland cameras to $\Pcal \simeq \Qcal$ in $\PP^1$. Then the first 
    camera center $a \in \PP^2_x$  can be chosen to be any point in 
    $\PP^2_x \setminus \X$. Having fixed $a$, the other camera center $b$ is any point different from $y_1, \dots, y_4$ on the unique conic $\omega$ containing $\Y$ with conic cross-ratio 
    $$(y_1,y_2;y_3,y_4)_\omega = (x_1,x_2;x_3,x_4;a).$$
\end{theorem}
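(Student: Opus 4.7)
The plan is to apply \Cref{thm:pull back tool}, which reduces the existence of compatible cameras with centers $a$ and $b$ to the proportionality $g'(\X, a) \sim g'(\Y, b)$. Since $n = 4$ is even, \Cref{thm:even odd generators} identifies the generators of $R$ as bracket monomials in degree $\ones$ indexed by non-crossing perfect matchings on the square, of which there are exactly two. So I take $g = \bigl([12][34],\; [14][23]\bigr)$, with the third product $[13][24]$ eliminated by the Pl\"ucker relation~\eqref{eq:pluecker n=4}. Pulling back (\Cref{def:g' vector}) yields $g'(\X, a) = \bigl([12a]_x[34a]_x,\, [14a]_x[23a]_x\bigr)$ and analogously for $\Y, b$.

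Then I would rewrite the proportionality as the single scalar equation
\[
\frac{[12a]_x\,[34a]_x}{[14a]_x\,[23a]_x} \;=\; \frac{[12b]_y\,[34b]_y}{[14b]_y\,[23b]_y}.
\]
By \Cref{lem:pullback}, each side is a cross-ratio of four points in $\PP^1$, and classically coincides with the pencil cross-ratio of the four lines from $a$ (respectively $b$) through the $x_i$ (respectively $y_i$). So the matching invariants condition becomes precisely $(x_1,x_2;x_3,x_4;a) = (y_1,y_2;y_3,y_4;b)$.

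The geometric heart of the argument is Steiner's classical characterization of conics. For fixed $\Y$ and a value $c \in \PP^1$, the locus $\{b : (y_1,y_2;y_3,y_4;b) = c\}$ is cut out by the quadratic form $[12b]_y[34b]_y - c\,[14b]_y[23b]_y$, which vanishes at each $y_i$; hence this locus is a conic $\omega$ through $\Y$, and $c$ is by construction its Steiner invariant $(y_1,y_2;y_3,y_4)_\omega$. As $c$ varies over $\PP^1$ one sweeps out the full pencil of conics through $y_1,\ldots,y_4$, so $\omega$ is uniquely determined by $c$. Applied with $c = (x_1,x_2;x_3,x_4;a)$, this exhibits the asserted conic and shows the locus of admissible $b$ is $\omega \setminus \{y_1,\ldots,y_4\}$. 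That $a$ may indeed be chosen arbitrarily in $\PP^2_x \setminus \X$ is consistent with $\dim \EPV{4} = 3$ from \Cref{lemma:dim-En}, matching a free $a \in \PP^2_x$ with a one-parameter $b$.

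The main obstacle I anticipate is the handling of the degenerate values $c \in \{0,1,\infty\}$, which occur precisely when $a$ lies on one of the six lines $\overline{x_i x_j}$. In these cases $\omega$ degenerates into a pair of lines pairing up the $y_i$, and one must check that the statement still holds under the convention that ``conic'' includes such degenerations and that the Steiner cross-ratio extends appropriately to this limit. One also has to justify that the excluded four points $\X$ on the $x$-side and $\Y$ on the $y$-side are the only exclusions forced by the requirement that $A$ and $B$ be genuine (rank $2$) flatland cameras.
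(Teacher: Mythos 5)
Your proposal is correct and follows essentially the same route as the paper: apply \Cref{thm:pull back tool} with a two-element basis of $R_\ones$, translate the proportionality into equality of planar cross-ratios around $a$ and $b$, and identify the locus of $b$ as the unique conic through $\Y$ with that conic cross-ratio (the paper invokes \Cref{lem: conic formula} where you invoke Steiner's pencil, and it picks the basis $([13][24],[14][23])$ rather than your $([12][34],[14][23])$, the two being interchangeable via the Pl\"ucker relation~\eqref{eq:pluecker n=4}, which also reconciles your ratio with the paper's cross-ratio convention). The degenerate values $c\in\{0,1,\infty\}$ you worry about are excluded by the genericity hypothesis, exactly as in the paper.
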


We now define the cross-ratios needed in the statement and proof of Theorem~\ref{thm:n=4 loci invariants}.

\begin{definition}
Setting $[ij] = \det[p_i \,\, p_j]$, the {\bf cross-ratio} of  $p_1,\ldots,p_4\in\PP^1$ is
\begin{equation} \label{eq:cross-ratio}
(p_1,p_2;p_3,p_4):=\frac{[13][24]}{[14][23]}.
\end{equation}
\end{definition}

The cross-ratio is the only projective invariant of 
$4$ points in $\PP^1$ in the following sense. The proof 
is an immediate consequence of Lemma~\ref{lem:signatures of orbits}.

\begin{lemma} \label{lem:cr=homography}
If $\Pcal = \{ p_1,\ldots,p_4 \} \subset \PP^1$ and $\Qcal = \{ q_1,\ldots,q_4 \} \subset \PP^1$ are two labeled sets of points then $(p_1,p_2;p_3,p_4)=(q_1,q_2;q_3,q_4)$ if and only if $\Pcal \simeq \Qcal$. 
\end{lemma}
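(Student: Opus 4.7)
The plan is to apply Lemma \ref{lem:signatures of orbits} with $n=4$ and identify the invariants explicitly. By Theorem \ref{thm:even odd generators}, for $n=4$ the graded piece $R_{\ones}$ is spanned by the two non-crossing perfect matching monomials on the square, namely $m_1 = [12][34]$ and $m_2 = [14][23]$; the third perfect matching $[13][24]$ is dependent via the Plücker relation \eqref{eq:pluecker n=4}. Thus we may take $g = (m_1, m_2)$ as the vector in Definition \ref{def:g vector}.

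With this choice of $g$, the projective equivalence $g(P) \sim g(Q)$ is equivalent to the equality of scalar ratios
\[
\frac{[12][34]|_P}{[14][23]|_P} = \frac{[12][34]|_Q}{[14][23]|_Q}.
\]
Using the Plücker relation \eqref{eq:pluecker n=4} to rewrite $[12][34] = [13][24] - [14][23]$, this ratio equals $(p_1,p_2;p_3,p_4) - 1$. Hence $g(P) \sim g(Q)$ is equivalent to the equality of cross-ratios, and Lemma \ref{lem:signatures of orbits} yields the biconditional between equality of cross-ratios and $\Pcal \simeq \Qcal$.

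The only subtlety is that Lemma \ref{lem:signatures of orbits} is phrased for generic points, while we want the conclusion for any two labeled 4-tuples on which the cross-ratio is defined. Inspecting the proof of Lemma \ref{lem:signatures of orbits}, the genericity hypothesis is invoked only when extending a homography matching the first four points to the remaining indices $i > 4$; for $n=4$ there is nothing further to extend. The relevant non-degeneracy, namely that $p_1,p_2,p_3$ are distinct and likewise $q_1,q_2,q_3$, is automatically implied by the definedness and nonvanishing of the cross-ratios. Thus the specialization to $n=4$ is immediate and no genuine obstacle arises.
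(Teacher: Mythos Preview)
Your argument is correct and follows the same route as the paper, which simply records that the lemma ``is an immediate consequence of Lemma~\ref{lem:signatures of orbits}.'' You have merely unpacked that citation: identifying a basis of $R_{\ones}$ for $n=4$, translating $g(P)\sim g(Q)$ into the cross-ratio equality via the Pl\"ucker relation, and noting that the genericity hypothesis of Lemma~\ref{lem:signatures of orbits} collapses to distinctness when $n=4$. One cosmetic remark: the paper (in the proof of Theorem~\ref{thm:n=4 loci invariants}) chooses $g=([13][24],[14][23])$, for which the ratio $m_1/m_2$ is the cross-ratio directly, avoiding your detour through $[12][34]=[13][24]-[14][23]$; either choice works.
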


Permutations of points changes the cross-ratio systematically; for all $\sigma\in S_4$,
\begin{equation}
(p_1,p_2;p_3,p_4)=(q_1,q_2;q_3,q_4) \,\,\,\Leftrightarrow \,\,\, (p_{\sigma(1)},p_{\sigma(2)};p_{\sigma(3)},p_{\sigma(4)})=(q_{\sigma(1)},q_{\sigma(2)};q_{\sigma(3)},q_{\sigma(4)}).
\end{equation}
Thus, we  do not need to consider multiple orderings.
The cross-ratio is $0,1,\infty$, or $\frac{0}{0}$ if and only if the points are not distinct. See \cite{semple-kneebone}[III, \S 4-5].

\begin{definition}
The {\bf planar cross-ratio} of $x_1,\ldots,x_5\in\PP^2$, or  {\bf cross-ratio around} $x_5$, is
\begin{equation} \label{eq:5 point cross-ratio}
 (x_1,x_2;x_3,x_4;x_5) :=\frac{[135][245]}{[145][235]}.
\end{equation}
\end{definition}

Note that planar cross-ratios are preserved under a homography of $\PP^2$.  For $5$ distinct points, the cross-ratio around $x_5$ can be obtained geometrically by drawing the $4$  lines $\overline{x_ix_5}$ for $i=1,2,3,4$, cutting them with a transversal, and computing the cross-ratio of the intersection points. 
A planar cross-ratio is transformed by permutations of $x_1, \ldots,x_4$ similarly to the usual cross-ratio.

\begin{lemma}\label{lem: conic formula}\cite[Ex. 3.4.3]{SturmfelsInvariant}
A collection of $6$ points $x_1,\ldots,x_6 \in \PP^2$ lie on a conic if and only if 
\begin{equation} \label{eq:conic condition}
[135][245][146][236]=[136][246][145][235], 
\end{equation}
or generically, the following cross-ratio equality holds: 
\begin{equation}
(x_1,x_2;x_3,x_4;x_5)=(x_1,x_2;x_3,x_4;x_6).
\end{equation}
\end{lemma}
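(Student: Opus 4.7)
My plan is to parametrize the pencil of conics through $x_1,x_2,x_3,x_4$ by the planar cross-ratio taken around a variable fifth point, and then match this parameter against the data of $x_5$ and $x_6$. I will prove the two formulations simultaneously.

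First, observe that clearing denominators in
\[
\frac{[135][245]}{[145][235]} \; = \; \frac{[136][246]}{[146][236]}
\]
gives exactly the bracket identity $[135][245][146][236]=[136][246][145][235]$, so the cross-ratio equality $(x_1,x_2;x_3,x_4;x_5)=(x_1,x_2;x_3,x_4;x_6)$ is equivalent to \eqref{eq:conic condition} whenever all denominators are nonzero. Since \eqref{eq:conic condition} is a polynomial identity, it extends to the remaining configurations by Zariski closure. Thus it suffices to prove the criterion in the form of the cross-ratio equality.

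Next, for fixed points $x_1,\ldots,x_4$ in general position and a scalar $c\in\CC$, introduce the polynomial
\[
f_c(p) \; := \; [13p][24p] - c\,[14p][23p]
\]
in the homogeneous coordinates of a variable point $p\in\PP^2$. Each bracket $[ijp]$ is linear in $p$, so $f_c$ is homogeneous of degree $2$ and its vanishing locus $V(f_c)\subset\PP^2$ is a conic. Geometrically, $[13p][24p]=0$ cuts out the reducible conic $\overline{x_1x_3}\cup\overline{x_2x_4}$ and $[14p][23p]=0$ cuts out $\overline{x_1x_4}\cup\overline{x_2x_3}$; these are two distinct reducible conics through $x_1,x_2,x_3,x_4$. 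Since the pencil of conics through four points in general position is one-dimensional, the family $\{V(f_c)\}_{c\in\PP^1}$ is precisely this pencil. In particular $x_1,\ldots,x_4\in V(f_c)$ for every $c$, as can also be checked directly: substituting $p=x_1$ annihilates both $[13p]$ and $[14p]$, and similarly for $p=x_2,x_3,x_4$.

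Now I can finish. Given generic $x_1,\ldots,x_5$, let $\omega$ be the unique conic through these five points, and set $c^{\ast}:=(x_1,x_2;x_3,x_4;x_5)$. The polynomial $f_{c^{\ast}}$ vanishes at $x_5$ by construction and at $x_1,\ldots,x_4$ as noted, so by uniqueness $\omega=V(f_{c^{\ast}})$. Then $x_6\in\omega$ if and only if $f_{c^{\ast}}(x_6)=0$, which by the definition of the planar cross-ratio is exactly $(x_1,x_2;x_3,x_4;x_6)=c^{\ast}=(x_1,x_2;x_3,x_4;x_5)$. Clearing denominators recovers \eqref{eq:conic condition}.

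The main obstacle is the bookkeeping needed to ensure that $\{V(f_c)\}$ really exhausts the full pencil and that $V(f_{c^{\ast}})$ coincides with the unique smooth conic through five generic points rather than a degenerate member. Both are resolved by noting that $[13p][24p]$ and $[14p][23p]$ span a two-dimensional subspace of the degree-$2$ forms vanishing on $\{x_1,\ldots,x_4\}$ (because the corresponding reducible conics are distinct), and that genericity of $x_5$ forces the resulting $c^{\ast}$ to lie away from the finite set of parameter values for which $V(f_c)$ degenerates.
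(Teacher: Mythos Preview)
Your proof is correct. The paper does not supply its own proof of this lemma---it simply cites it as Exercise~3.4.3 in Sturmfels' \emph{Algorithms in Invariant Theory}---so there is no argument to compare against. Your approach via the pencil of conics through four base points is the standard classical one, and it cleanly establishes the cross-ratio formulation, which is the only form actually used downstream (to justify that the conic cross-ratio in Definition~\ref{def:conic cross-ratio} is well-defined).

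One small remark: your sentence ``it extends to the remaining configurations by Zariski closure'' is a little glib. Knowing that two closed loci agree on a dense open set does not by itself force them to be equal; one needs, for instance, that both are irreducible hypersurfaces of the same multidegree, or a direct check that the bracket polynomial and the $6\times 6$ conic determinant agree up to scalar. This is true here and easy to verify, but it is worth saying explicitly if you want the non-generic bracket identity~\eqref{eq:conic condition} in full strength. For the paper's purposes this does not matter, since only the generic statement is invoked.
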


Lemma~\ref{lem: conic formula} makes the following definition of a conic cross-ratio well-defined. 

\begin{definition}\label{def:conic cross-ratio}
The {\bf conic cross-ratio}
of $4$ points $x_1,\ldots,x_4$ on a (non-degenerate) conic $\omega$  is
\begin{equation}
(x_1,x_2;x_3,x_4)_\omega:=(x_1,x_2;x_3,x_4;x)
\end{equation}
where $x \in \omega$ is any new point.
\end{definition}

\begin{proof}[Proof of Theorem~\ref{thm:n=4 loci invariants}]
For $n=4$ choose the vector of invariants $g=([13][24],[14][23])$ as in Definition~\ref{def:g vector}. Note that these are not non-crossing matchings but is still a linearly independent set of generators of $R_\ones$.  
By Theorem~\ref{thm:pull back tool} and the definition of planar cross-ratios, 
\begin{align} \label{eq:n=4 pullback}
\begin{split}
\Pcal \simeq \Qcal & \,\,\Leftrightarrow \,\,([13a]_x[24a]_x,[14a]_x[23 a]_x)\sim([13b]_y[24b]_y,[14b]_y[23b]_y)\\
& \,\,\Leftrightarrow \,\,(x_1,x_2;x_3,x_4;a)=(y_1,y_2;y_3,y_4;b).
\end{split}
\end{align}

Picking $a \neq x_i$ arbitrarily, we obtain a unique conic $\omega_x$ passing through $a,x_1, \ldots, x_4$ with cross-ratio $\lambda = (x_1,x_2;x_3,x_4;a)$.
Equation~\ref{eq:n=4 pullback} says that $b$ must lie on  the 
unique conic $\omega_y$ containing $y_1, \ldots, y_4$ with cross-ratio
$\lambda = (y_1,y_2;y_3,y_4)_{\omega_y}$. 
Concretely, once $a \in \PP^2_x$ is fixed,  $\omega_y$ is given by the following quadratic equation in the entries of $b$:
\begin{align}
    \frac{[13b]_y[24b]_y}{[14b]_y[23b]_y} = \frac{[13 {a}]_x[24 {a}]_x}{[14 {a}]_x[23 {a}]_x}.
\end{align}
\end{proof}

\definecolor{uuuuuu}{rgb}{0.26666666666666666,0.26666666666666666,0.26666666666666666}
\definecolor{BLUE}{rgb}{0.,0.,1.}
\definecolor{CANVAS}{rgb}{0.5, 0.5, 0.5} 
\definecolor{GRAY}{rgb}{0.5019607843137255,0.5019607843137255,0.5019607843137255}
\definecolor{GREEN}{rgb}{0.,0.7,0.}
\definecolor{YELLOW}{rgb}{1,0.8,0.}
\definecolor{ORANGE}{rgb}{1.,0.4980392156862745,0.}
\definecolor{RED}{rgb}{1.,0.,0.}
\def\ONE#1{\textcolor{RED}{#1}}
\def\TWO#1{\textcolor{ORANGE}{#1}}
\def\THREE#1{\textcolor{BLUE}{#1}}
\def\FOUR#1{\textcolor{GREEN}{#1}}
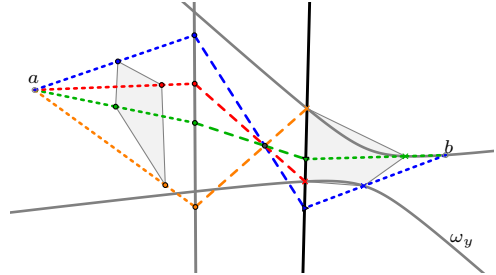
\begin{figure}[h]
\begin{multicols}2
\begin{enumerate}
\item 
Begin with $\X = \{x_1, x_2, x_3, x_4\}$ and\\
$\Y = \{ y_1, y_2, y_3, y_4\}$ in $\PP^2.$  \\
(In the diagram to the right, we connect each set of points by a shaded quadrangle to aid with visualization).  

 \item 
 Choose an arbitrary $a$ and image line $\ell_x$, and let $\ell_y := \ONE{y_1}\TWO{y_2}$. 

\item
Let ${x}_i^\prime$ be the projection of $x_i$ on $\ell_x$ through $a$. The points ${x}_i^\prime$ have a fixed cross-ratio $\lambda$. 

\item
Construct the center of perspectivity $O = ( \ONE{x_1^\prime y_1})\cap( \TWO{x_2^\prime y_2})$.
\end{enumerate}

\vfill \null
\columnbreak

\begin{tikzpicture}[scale =0.8, line cap=round,line join=round,>=triangle 45,x=1.0cm,y=1.0cm]
	\clip(-2.7 ,-0 ) rectangle (8, 6.0);

% FOUR X
\fill[line width=1.0pt,color=GRAY,fill=GRAY,fill opacity=0.1] 
	(0.64,4.16) -- (-0.3305456708301174,4.6825528903906175) -- (-0.38,3.68) -- (0.72,1.94) -- cycle;
	\draw [color=GRAY] (0.64,4.16) -- (-0.3305456708301174,4.6825528903906175) -- (-0.38,3.68) -- (0.72,1.94) -- cycle;
% FOUR Y
\fill[line width=1.0pt,color=GRAY,fill=GRAY,fill opacity=0.1] 
	(3.82,2.02) -- (5.118376078924618,1.9227529675481412) -- (6.04,2.58) -- (3.86,3.64) -- cycle;
	\draw [color=GRAY] (3.82,2.02) -- (5.118376078924618,1.9227529675481412) -- (6.04,2.58) -- (3.86,3.64) -- cycle;

\draw [line width=1.1pt,domain=-2.708101659730916:13.48256455875219] plot(\x,{(--6.1076-1.62*\x)/-0.04});
\draw [line width=1.1pt,color=GRAY,domain= 1 : 2] 
	plot(\x,{(--13.390682369111895-9.597559871781217*\x)/0.05563802824220998});

\draw [line width=1.0pt,dotted,color=ORANGE] (-2.177011390146186,4.04952866062554)-- (0.72,1.94);
\draw [line width=1.0pt,dotted,color=ORANGE] (0.72,1.94)-- (1.3867857082862747,1.4544639172252556);
\draw [line width=1.0pt,dotted,color=GREEN] (-2.177011390146186,4.04952866062554)-- (-0.38,3.68);
\draw [line width=1.0pt,dotted,color=GREEN] (-0.38,3.68)-- (1.3759773287705541,3.318909383686984);
\draw [line width=1.0pt,dotted,color=RED] (-2.177011390146186,4.04952866062554)-- (0.64,4.16);
\draw [line width=1.0pt,dotted,color=RED] (0.64,4.16)-- (1.3709352718220307,4.188664207307314);
\draw [line width=1.0pt,dotted,color=BLUE] (-2.177011390146186,4.04952866062554)-- (-0.3305456708301174,4.6825528903906175);
\draw [line width=1.0pt,dotted,color=BLUE] (-0.3305456708301174,4.6825528903906175)-- (1.3647029754047957,5.263735339280401);

\draw [line width=1.0pt,dash pattern=on 3pt off 3pt,color=RED] (1.3709352718220307,4.188664207307314)-- (2.924305870239852,2.813143512115842);
\draw [line width=1.0pt,dash pattern=on 3pt off 3pt,color=RED] (2.924305870239852,2.813143512115842)-- (3.82,2.02);
\draw [line width=1.0pt,dash pattern=on 3pt off 3pt,color=ORANGE] (1.3867857082862747,1.4544639172252556)-- (2.924305870239852,2.813143512115842);
\draw [line width=1.0pt,dash pattern=on 3pt off 3pt,color=ORANGE] (2.924305870239852,2.813143512115842)-- (3.86,3.64);

\begin{scriptsize}
	\draw [fill=BLUE] (-0.3305456708301174,4.6825528903906175) circle (1.5 pt);
		\draw [color=BLUE] (-0.33, 4.9) node {$x_3$};
    \draw [fill=RED] (0.64,4.16) circle (1.5pt);
    \draw [fill=ORANGE] (0.72,1.94) circle (1.5pt);
    \draw [fill=GREEN] (-0.38,3.68) circle (1.5pt);
   \draw [color=RED] (3.82,2.02)-- ++(-1.5pt,-1.5pt) -- ++(3.0pt,3.0pt) ++(-3.0pt,0) -- ++(3.0pt,-3.0pt);
    \draw [color=BLUE] (5.118376078924618,1.9227529675481412)-- ++(-1.5pt,-1.5pt) -- ++(3.0pt,3.0pt) ++(-3.0pt,0) -- ++(3.0pt,-3.0pt);
    	\draw [color=BLUE] (5.2, 1.7) node {$y_3$};    
    \draw [color=GREEN] (6.04,2.58)-- ++(-1.5pt,-1.5pt) -- ++(3.0pt,3.0pt) ++(-3.0pt,0) -- ++(3.0pt,-3.0pt);
     	\draw [color=GREEN] (6.0, 2.8) node {$y_4$};
   \draw [color=ORANGE] (3.86,3.64)-- ++(-1.5pt,-1.5pt) -- ++(3.0pt,3.0pt) ++(-3.0pt,0) -- ++(3.0pt,-3.0pt);
    \draw [color=GRAY] (-2.177011390146186,4.04952866062554) circle (1.5pt);
    	\draw (-2.2, 4.3) node {$a$};
    \draw [fill=BLUE] (1.3647029754047957,5.263735339280401) circle (1.5pt);
    	\draw [color = BLUE]  (1.65, 5.3) node {$x_3^\prime $};
    \draw [fill=RED] (1.3709352718220307,4.188664207307314) circle (1.5pt);
    \draw [fill=GREEN] (1.3759773287705541,3.318909383686984) circle (1.5pt);
		\draw [color=GREEN] (1.65, 3.3) node {$x_4^\prime $};
     \draw [fill=ORANGE] (1.3867857082862747,1.4544639172252556) circle (1.5pt);
    
    \draw [fill=uuuuuu] (2.924305870239852,2.813143512115842) circle (1.5pt);
    	\draw (2.9, 3.1) node {$O$};
	\draw [color=RED] (0.56, 4.5) node {$ x_1$};
   \draw [color=RED] (1.65, 4.3) node {$ x_1^\prime $};
	\draw [color=ORANGE] (1.65, 1.4) node {$x_2^\prime $};
	\draw [color=ORANGE] (0.6, 1.7) node {$ x_2$}; 
    \draw [color=GRAY] (1.7, 0.3) node {$\ell_x$};
   \draw [color=GRAY] (4.1, 0.3) node {$\ell_y$};
	\draw [color=GREEN] (-0.5, 3.5) node {$ x_4$};
	   \draw [color=RED] (4.1, 1.8) node {$ y_1$};
	\draw [color=ORANGE] (4.2,3.8) node {$ y_2$};
	
\end{scriptsize}
\end{tikzpicture}

\end{multicols}

\begin{multicols}2
\begin{enumerate}
\setcounter{enumi}{4}
\item Let $\ONE{y_1^\prime} =\ONE{y_1}$ and $\TWO{y_2^\prime}  = \TWO{y_2}$.  Use  $O$ to locate $\THREE{y_3^\prime} , \FOUR{y_4^\prime} \in \ell_y$.  By construction, the sets $\{ x_i^\prime \}, \{y_i^\prime \}$ are projectively equivalent. Hence, 
the cross-ratio of the points $\{y_i^\prime \}$ is also $\lambda$.

\item 
Construct  $b = (\THREE{y_3^\prime y_3}) \cap ( \FOUR{y_4^\prime y_4}) $.  
Then $y_i^\prime $ is the projection of $y_i$ through $b$  for all $i$. 
\end{enumerate}
\vfill \null
\columnbreak

%. THE NEXT CAMERA CENTER
\begin{tikzpicture}[scale =0.8, line cap=round,line join=round,>=triangle 45,x=1.0cm,y=1.0cm]
	\clip(-2.7 ,0.5 ) rectangle (8, 6.0);

% FOUR X
\fill[line width=1.0pt,color=GRAY,fill=GRAY,fill opacity=0.1] 
	(0.64,4.16) -- (-0.3305456708301174,4.6825528903906175) -- (-0.38,3.68) -- (0.72,1.94) -- cycle;
	\draw [color=GRAY] (0.64,4.16) -- (-0.3305456708301174,4.6825528903906175) -- (-0.38,3.68) -- (0.72,1.94) -- cycle;
% FOUR Y
\fill[line width=1.0pt,color=GRAY,fill=GRAY,fill opacity=0.1] 
	(3.82,2.02) -- (5.118376078924618,1.9227529675481412) -- (6.04,2.58) -- (3.86,3.64) -- cycle;
	\draw [color=GRAY] (3.82,2.02) -- (5.118376078924618,1.9227529675481412) -- (6.04,2.58) -- (3.86,3.64) -- cycle;

\draw [line width=1.1pt,domain=-2.708101659730916:13.48256455875219] plot(\x,{(--6.1076-1.62*\x)/-0.04});
\draw [line width=1.1pt,color=CANVAS,domain= 1 : 2] 
	plot(\x,{(--13.390682369111895-9.597559871781217*\x)/0.05563802824220998});

%\draw [samples=50,domain=-0.99:0.99,rotate around={-107.37587681600097:(5.232723278254678,2.304345571199109)},xshift=5.232723278254678cm,yshift=2.304345571199109cm,line width=1.0pt,color=GRAY] plot ({0.39832169154928954*(1+(\x)^2)/(1-(\x)^2)},{0.8868623380858361*2*(\x)/(1-(\x)^2)});
%\draw [samples=50,domain=-0.99:0.99,rotate around={-107.37587681600097:(5.232723278254678,2.304345571199109)},xshift=5.232723278254678cm,yshift=2.304345571199109cm,line width=1.0pt,color=GRAY] plot ({0.39832169154928954*(-1-(\x)^2)/(1-(\x)^2)},{0.8868623380858361*(-2)*(\x)/(1-(\x)^2)});

\draw [line width=1.0pt,dotted,color=ORANGE] (-2.177011390146186,4.04952866062554)-- (0.72,1.94);
\draw [line width=1.0pt,dotted,color=ORANGE] (0.72,1.94)-- (1.3867857082862747,1.4544639172252556);
\draw [line width=1.0pt,dotted,color=GREEN] (-2.177011390146186,4.04952866062554)-- (-0.38,3.68);
\draw [line width=1.0pt,dotted,color=GREEN] (-0.38,3.68)-- (1.3759773287705541,3.318909383686984);
\draw [line width=1.0pt,dotted,color=RED] (-2.177011390146186,4.04952866062554)-- (0.64,4.16);
\draw [line width=1.0pt,dotted,color=RED] (0.64,4.16)-- (1.3709352718220307,4.188664207307314);
\draw [line width=1.0pt,dotted,color=BLUE] (-2.177011390146186,4.04952866062554)-- (-0.3305456708301174,4.6825528903906175);
\draw [line width=1.0pt,dotted,color=BLUE] (-0.3305456708301174,4.6825528903906175)-- (1.3647029754047957,5.263735339280401);
\draw [line width=1.0pt,dash pattern=on 3pt off 3pt,color=BLUE] (1.3647029754047957,5.263735339280401)-- (2.924305870239852,2.813143512115842);
\draw [line width=1.0pt,dash pattern=on 3pt off 3pt,color=BLUE] (2.924305870239852,2.813143512115842)-- (3.805399695736176,1.4286876773151478);
\draw [line width=1.0pt,dash pattern=on 3pt off 3pt,color=RED] (1.3709352718220307,4.188664207307314)-- (2.924305870239852,2.813143512115842);
\draw [line width=1.0pt,dash pattern=on 3pt off 3pt,color=RED] (2.924305870239852,2.813143512115842)-- (3.82,2.02);
\draw [line width=1.0pt,dash pattern=on 3pt off 3pt,color=ORANGE] (1.3867857082862747,1.4544639172252556)-- (2.924305870239852,2.813143512115842);
\draw [line width=1.0pt,dash pattern=on 3pt off 3pt,color=ORANGE] (2.924305870239852,2.813143512115842)-- (3.86,3.64);
\draw [line width=1.0pt,dash pattern=on 3pt off 3pt,color=GREEN] (1.3759773287705541,3.318909383686984)-- (2.924305870239852,2.813143512115842);
\draw [line width=1.0pt,dash pattern=on 3pt off 3pt,color=GREEN] (2.924305870239852,2.813143512115842)-- (3.8322606795241616,2.516557520728565);
\draw [line width=1.0pt,dotted,color=GREEN] (3.8322606795241616,2.516557520728565)-- (6.04,2.58);
\draw [line width=1.0pt,dotted,color=GREEN] (6.04,2.58)-- (6.933219615781861,2.6056679157876603) ;

\draw [line width=1.0pt,dotted,color=BLUE] (6.933219615781861,2.6056679157876603) -- (5.118376078924618,1.9227529675481412);
\draw [line width=1.0pt,dotted,color=BLUE] (5.118376078924618,1.9227529675481412)-- (3.805399695736176,1.4286876773151478);

\begin{scriptsize}
		\draw [fill=blue] (-0.3305456708301174,4.6825528903906175) circle (1.5 pt);
    \draw [fill=RED] (0.64,4.16) circle (1.5pt);
    \draw [fill=ORANGE] (0.72,1.94) circle (1.5pt);
    \draw [fill=GREEN] (-0.38,3.68) circle (1.5pt);
    \draw [color=RED] (3.82,2.02)-- ++(-1.5pt,-1.5pt) -- ++(3.0pt,3.0pt) ++(-3.0pt,0) -- ++(3.0pt,-3.0pt);
    \draw [color=BLUE] (5.118376078924618,1.9227529675481412)-- ++(-1.5pt,-1.5pt) -- ++(3.0pt,3.0pt) ++(-3.0pt,0) -- ++(3.0pt,-3.0pt);
    
    \draw [color=GREEN] (6.04,2.58)-- ++(-1.5pt,-1.5pt) -- ++(3.0pt,3.0pt) ++(-3.0pt,0) -- ++(3.0pt,-3.0pt);
    \draw [color=ORANGE] (3.86,3.64)-- ++(-1.5pt,-1.5pt) -- ++(3.0pt,3.0pt) ++(-3.0pt,0) -- ++(3.0pt,-3.0pt);
    \draw [fill=CANVAS] (1.3534744019504117,7.200664260161612) ++(-1.5pt,0 pt) -- ++(1.5pt,1.5pt)--++(1.5pt,-1.5pt)--++(-1.5pt,-1.5pt)--++(-1.5pt,1.5pt);
    \draw [fill=CANVAS] (1.4091124301926217,-2.3968956116196045) ++(-1.5pt,0 pt) -- ++(1.5pt,1.5pt)--++(1.5pt,-1.5pt)--++(-1.5pt,-1.5pt)--++(-1.5pt,1.5pt);
    \draw [color=GRAY] (-2.177011390146186,4.04952866062554) circle (1.5pt);
    \draw [fill=BLUE] (1.3647029754047957,5.263735339280401) circle (1.5pt);
    \draw [fill=RED] (1.3709352718220307,4.188664207307314) circle (1.5pt);
    \draw [fill=GREEN] (1.3759773287705541,3.318909383686984) circle (1.5pt);
    \draw [fill=ORANGE] (1.3867857082862747,1.4544639172252556) circle (1.5pt);
    
    \draw [fill=uuuuuu] (2.924305870239852,2.813143512115842) circle (1.5pt);
    \draw [fill=BLUE,shift={(3.805399695736176,1.4286876773151478)},rotate=270] (0,0) ++(0 pt,2.25pt) -- ++(1.9485571585149868pt,-3.375pt)--++(-3.8971143170299736pt,0 pt) -- ++(1.9485571585149868pt,3.375pt);
    \draw [fill=GREEN,shift={(3.8322606795241616,2.516557520728565)},rotate=180] (0,0) ++(0 pt,2.25pt) -- ++(1.9485571585149868pt,-3.375pt)--++(-3.8971143170299736pt,0 pt) -- ++(1.9485571585149868pt,3.375pt);
    	\draw [color=GRAY] (6.933219615781861,2.6056679157876603) circle (1.5pt);
	\draw (7, 2.8) node {$b$};

    	\draw [color = BLUE]  (1.65, 5.3) node {$x_3^\prime $};
    	\draw [color = BLUE]  (4.1, 1.3) node {$y_3^\prime $};
		\draw [color=GREEN] (1.65, 3.0) node {$x_4^\prime $};
		\draw [color=GREEN] (4.1, 2.8) node {$y_4^\prime $};
    	\draw [color=BLUE] (5.2, 1.7) node {$y_3$};    
     	\draw [color=GREEN] (6.0, 2.8) node {$y_4$};
  	\draw (3.0, 3.1) node {$O$};
    	\draw (-2.2, 4.3) node {$a$};

\end{scriptsize}
\end{tikzpicture}
% -------------------------------------------------------

\end{multicols}

\begin{multicols}2
\begin{enumerate}
\setcounter{enumi}{6}
\item
Let $\omega_y$ be the conic through  $\ONE{y_1}, \TWO{y_2}, \THREE{y_3}, \FOUR{y_4}, b$. 
By construction, $\omega_y$ has conic cross-ratio $\lambda$, and  $\lambda = (\ONE{y_1}, \TWO{y_2}; \THREE{y_3}, \FOUR{y_4}; b_y) = (\ONE{y_1}, \TWO{y_2}; \THREE{y_3}, \FOUR{y_4}; b)  $ for all $b_y \in \omega_y\setminus\{y_i\}$.  Hence, $\omega_y$ is the locus of all possible camera centers $b$ corresponding to $a$.
\end{enumerate}
\vfill \null
\columnbreak
%. THE CONIC
\begin{tikzpicture}[scale =0.6, line cap=round,line join=round,>=triangle 45,x=1.0cm,y=1.0cm]
	\clip(-2.7 ,-0 ) rectangle (8, 6.0);

% FOUR X
\fill[line width=1.0pt,color=GRAY,fill=GRAY,fill opacity=0.1] 
	(0.64,4.16) -- (-0.3305456708301174,4.6825528903906175) -- (-0.38,3.68) -- (0.72,1.94) -- cycle;
	\draw [color=GRAY] (0.64,4.16) -- (-0.3305456708301174,4.6825528903906175) -- (-0.38,3.68) -- (0.72,1.94) -- cycle;
% FOUR Y
\fill[line width=1.0pt,color=GRAY,fill=GRAY,fill opacity=0.1] 
	(3.82,2.02) -- (5.118376078924618,1.9227529675481412) -- (6.04,2.58) -- (3.86,3.64) -- cycle;
	\draw [color=GRAY] (3.82,2.02) -- (5.118376078924618,1.9227529675481412) -- (6.04,2.58) -- (3.86,3.64) -- cycle;

\draw [line width=1.1pt,domain=-2.708101659730916:13.48256455875219] plot(\x,{(--6.1076-1.62*\x)/-0.04});
\draw [line width=1.1pt,color=CANVAS,domain= 1 : 2] 
	plot(\x,{(--13.390682369111895-9.597559871781217*\x)/0.05563802824220998});

\draw [samples=50,domain=-0.99:0.99,rotate around={-107.37587681600097:(5.232723278254678,2.304345571199109)},xshift=5.232723278254678cm,yshift=2.304345571199109cm,line width=1.0pt,color=GRAY] plot ({0.39832169154928954*(1+(\x)^2)/(1-(\x)^2)},{0.8868623380858361*2*(\x)/(1-(\x)^2)});
\draw [samples=50,domain=-0.99:0.99,rotate around={-107.37587681600097:(5.232723278254678,2.304345571199109)},xshift=5.232723278254678cm,yshift=2.304345571199109cm,line width=1.0pt,color=GRAY] plot ({0.39832169154928954*(-1-(\x)^2)/(1-(\x)^2)},{0.8868623380858361*(-2)*(\x)/(1-(\x)^2)});

\draw [line width=1.0pt,dotted,color=ORANGE] (-2.177011390146186,4.04952866062554)-- (0.72,1.94);
\draw [line width=1.0pt,dotted,color=ORANGE] (0.72,1.94)-- (1.3867857082862747,1.4544639172252556);
\draw [line width=1.0pt,dotted,color=GREEN] (-2.177011390146186,4.04952866062554)-- (-0.38,3.68);
\draw [line width=1.0pt,dotted,color=GREEN] (-0.38,3.68)-- (1.3759773287705541,3.318909383686984);
\draw [line width=1.0pt,dotted,color=RED] (-2.177011390146186,4.04952866062554)-- (0.64,4.16);
\draw [line width=1.0pt,dotted,color=RED] (0.64,4.16)-- (1.3709352718220307,4.188664207307314);
\draw [line width=1.0pt,dotted,color=BLUE] (-2.177011390146186,4.04952866062554)-- (-0.3305456708301174,4.6825528903906175);
\draw [line width=1.0pt,dotted,color=BLUE] (-0.3305456708301174,4.6825528903906175)-- (1.3647029754047957,5.263735339280401);
\draw [line width=1.0pt,dash pattern=on 3pt off 3pt,color=BLUE] (1.3647029754047957,5.263735339280401)-- (2.924305870239852,2.813143512115842);
\draw [line width=1.0pt,dash pattern=on 3pt off 3pt,color=BLUE] (2.924305870239852,2.813143512115842)-- (3.805399695736176,1.4286876773151478);
\draw [line width=1.0pt,dash pattern=on 3pt off 3pt,color=RED] (1.3709352718220307,4.188664207307314)-- (2.924305870239852,2.813143512115842);
\draw [line width=1.0pt,dash pattern=on 3pt off 3pt,color=RED] (2.924305870239852,2.813143512115842)-- (3.82,2.02);
\draw [line width=1.0pt,dash pattern=on 3pt off 3pt,color=ORANGE] (1.3867857082862747,1.4544639172252556)-- (2.924305870239852,2.813143512115842);
\draw [line width=1.0pt,dash pattern=on 3pt off 3pt,color=ORANGE] (2.924305870239852,2.813143512115842)-- (3.86,3.64);
\draw [line width=1.0pt,dash pattern=on 3pt off 3pt,color=GREEN] (1.3759773287705541,3.318909383686984)-- (2.924305870239852,2.813143512115842);
\draw [line width=1.0pt,dash pattern=on 3pt off 3pt,color=GREEN] (2.924305870239852,2.813143512115842)-- (3.8322606795241616,2.516557520728565);
\draw [line width=1.0pt,dotted,color=GREEN] (3.8322606795241616,2.516557520728565)-- (6.04,2.58);
\draw [line width=1.0pt,dotted,color=GREEN] (6.04,2.58)-- (6.933219615781861,2.6056679157876603) ;

\draw [line width=1.0pt,dotted,color=BLUE] (6.933219615781861,2.6056679157876603) -- (5.118376078924618,1.9227529675481412);
\draw [line width=1.0pt,dotted,color=BLUE] (5.118376078924618,1.9227529675481412)-- (3.805399695736176,1.4286876773151478);

\begin{scriptsize}
		\draw [fill=blue] (-0.3305456708301174,4.6825528903906175) circle (1.5 pt);
    \draw [fill=RED] (0.64,4.16) circle (1.5pt);
    \draw [fill=ORANGE] (0.72,1.94) circle (1.5pt);
    \draw [fill=GREEN] (-0.38,3.68) circle (1.5pt);
    \draw [color=RED] (3.82,2.02)-- ++(-1.5pt,-1.5pt) -- ++(3.0pt,3.0pt) ++(-3.0pt,0) -- ++(3.0pt,-3.0pt);
    \draw [color=BLUE] (5.118376078924618,1.9227529675481412)-- ++(-1.5pt,-1.5pt) -- ++(3.0pt,3.0pt) ++(-3.0pt,0) -- ++(3.0pt,-3.0pt);
    
    \draw [color=GREEN] (6.04,2.58)-- ++(-1.5pt,-1.5pt) -- ++(3.0pt,3.0pt) ++(-3.0pt,0) -- ++(3.0pt,-3.0pt);
    \draw [color=ORANGE] (3.86,3.64)-- ++(-1.5pt,-1.5pt) -- ++(3.0pt,3.0pt) ++(-3.0pt,0) -- ++(3.0pt,-3.0pt);
    \draw [fill=CANVAS] (1.3534744019504117,7.200664260161612) ++(-1.5pt,0 pt) -- ++(1.5pt,1.5pt)--++(1.5pt,-1.5pt)--++(-1.5pt,-1.5pt)--++(-1.5pt,1.5pt);
    \draw [fill=CANVAS] (1.4091124301926217,-2.3968956116196045) ++(-1.5pt,0 pt) -- ++(1.5pt,1.5pt)--++(1.5pt,-1.5pt)--++(-1.5pt,-1.5pt)--++(-1.5pt,1.5pt);
    \draw [color=GRAY] (-2.177011390146186,4.04952866062554) circle (1.5pt);
    \draw [fill=BLUE] (1.3647029754047957,5.263735339280401) circle (1.5pt);
    \draw [fill=RED] (1.3709352718220307,4.188664207307314) circle (1.5pt);
    \draw [fill=GREEN] (1.3759773287705541,3.318909383686984) circle (1.5pt);
    \draw [fill=ORANGE] (1.3867857082862747,1.4544639172252556) circle (1.5pt);
    
    \draw [fill=uuuuuu] (2.924305870239852,2.813143512115842) circle (1.5pt);
    \draw [fill=BLUE,shift={(3.805399695736176,1.4286876773151478)},rotate=270] (0,0) ++(0 pt,2.25pt) -- ++(1.9485571585149868pt,-3.375pt)--++(-3.8971143170299736pt,0 pt) -- ++(1.9485571585149868pt,3.375pt);
    \draw [fill=GREEN,shift={(3.8322606795241616,2.516557520728565)},rotate=180] (0,0) ++(0 pt,2.25pt) -- ++(1.9485571585149868pt,-3.375pt)--++(-3.8971143170299736pt,0 pt) -- ++(1.9485571585149868pt,3.375pt);
    	\draw [color=GRAY] (6.933219615781861,2.6056679157876603) circle (1.5pt);
	\draw (7, 2.8) node {$b$};
    	\draw (-2.2, 4.3) node {$a$};

	\draw (7.3, 0.7) node {$\omega_y$};
	
\end{scriptsize}
\end{tikzpicture}

\end{multicols}
    \caption{A geometric construction that operationalizes Theorem~\ref{thm:n=4 loci invariants}.}
    \label{fig:hyperbola-construction}
\end{figure}

\begin{figure}[h]
    \centering
    \definecolor{BLUE}{rgb}{0.,0.,1.}
\definecolor{aqaqaq}{rgb}{0.6274509803921569,0.6274509803921569,0.6274509803921569}
\definecolor{yqyqyq}{rgb}{0.5019607843137255,0.5019607843137255,0.5019607843137255}
\definecolor{ffqqqq}{rgb}{1.,0.,0.}
\definecolor{GREEN}{rgb}{0.,1.,0.}
\definecolor{ududff}{rgb}{0.30196078431372547,0.30196078431372547,1.}
\definecolor{ORANGE}{rgb}{1.,0.4980392156862745,0.}
\begin{tikzpicture}[line cap=round,line join=round,>=triangle 45,x=1.0cm,y=1.0cm]

\clip(-2.096982880756687,0.852102429262165) rectangle (3.506732433047883,4.417152656887827);

\draw[line width=1.0 pt,color=yqyqyq,fill=yqyqyq,fill opacity=0.10000000149011612] (2.733085169574865,2.4516704199563777) -- (1.7503440511091373,2.117120251968046) -- (1.4576126541193462,2.723492431446897) -- (2.063984833598199,2.744401816946168) -- cycle;
\draw[line width=1.0 pt,color=yqyqyq,fill=yqyqyq,fill opacity=0.10000000149011612] (-0.5287789683113776,2.4203063417074717) -- (-0.6646899740566383,1.615294999985547) -- (-0.9678760637960646,2.1903031012154925) -- (-0.9574213710464292,2.6084908112009075) -- cycle;

%\draw [line width=1.0pt,color=yqyqyq] (2.733085169574865,2.4516704199563777)-- (1.7503440511091373,2.117120251968046);
%\draw [line width=1.0pt,color=yqyqyq] (1.7503440511091373,2.117120251968046)-- (1.4576126541193462,2.723492431446897);
%\draw [line width=1.0pt,color=yqyqyq] (1.4576126541193462,2.723492431446897)-- (2.063984833598199,2.744401816946168);
%\draw [line width=1.0pt,color=yqyqyq] (2.063984833598199,2.744401816946168)-- (2.733085169574865,2.4516704199563777);
\draw [line width=1.0pt,color=aqaqaq,domain=-2.096982880756687:3.506732433047883] plot(\x,{(-1.6811075060628742--0.6063721794788512*\x)/-0.2927313969897911});
\draw [line width=1.0pt,color=yqyqyq,domain=-2.096982880756687:3.506732433047883] plot(\x,{(-0.46034173288833813-0.37817959045636407*\x)/-0.17784753556849364});
\draw [line width=1.0pt,dotted,color=ffqqqq] (-0.0897325197451726,2.397597042643703)-- (1.7503440511091373,2.117120251968046);
\draw [line width=1.0pt,dotted,color=ORANGE] (-0.5145315049120799,1.4942935151409031)-- (1.4576126541193462,2.723492431446897);
\draw [line width=1.0pt,dotted,color=BLUE] (0.08811501582332104,2.775776633100067)-- (2.245067743691548,1.0923354601901984);
\draw [line width=1.0pt,dotted,color=GREEN] (-0.3051443564916225,1.9395397525057043)-- (1.5493740449032818,2.533415264823032);
%\draw [line width=1.0pt,color=yqyqyq] (-0.5287789683113776,2.4203063417074717)-- (-0.6646899740566383,1.615294999985547);
%\draw [line width=1.0pt,color=yqyqyq] (-0.6646899740566383,1.615294999985547)-- (-0.9678760637960646,2.1903031012154925);
%\draw [line width=1.0pt,color=yqyqyq] (-0.9678760637960646,2.1903031012154925)-- (-0.9574213710464292,2.6084908112009075);
%\draw [line width=1.0pt,color=yqyqyq] (-0.9574213710464292,2.6084908112009075)-- (-0.5287789683113776,2.4203063417074717);

\draw [line width=1.0pt,dotted,color=GREEN] (2.971802950302511,3.1166004155882816)-- (1.5493740449032818,2.533415264823032);
\draw [line width=1.0pt,dotted,color=BLUE] (2.971802950302511,3.1166004155882816)-- (2.245067743691548,1.0923354601901984);
\draw [line width=1.0pt,dash pattern=on 1pt off 1pt on 3pt off 4pt,color=yqyqyq,domain=-2.096982880756687:3.506732433047883] plot(\x,{(-0.7493426263374849-3.117032358820299*\x)/-0.7385067564687608});
\draw [line width=1.0pt, dash pattern=on 2pt off 2pt,color=BLUE] (-1.7415233272690835,2.483034498205283)-- (0.43022033512646574,2.8305134841885704);
\draw [line width=1.0pt, dash pattern=on 2pt off 2pt,color=ffqqqq] (-1.7415233272690835,2.483034498205283)-- (0.3225980396218014,2.3762695999178245);
\draw [line width=1.0pt, dash pattern=on 2pt off 2pt,color=GREEN] (-1.7415233272690835,2.483034498205283)-- (0.17599243986354907,1.7574879917226667);
\draw [line width=1.0pt, dash pattern=on 2pt off 2pt,color=ORANGE] (-1.7415233272690835,2.483034498205283)-- (0.012931014183288377,1.0692509026659955);
\draw [line width=1.2pt,dotted,color=ORANGE] (1.4576126541193462,2.723492431446897)-- (2.971802950302511,3.1166004155882816);

\begin{scriptsize}
\draw [fill=ORANGE] (-0.6646899740566383,1.615294999985547) circle (1.5pt);
\draw [fill=ududff] (-0.9574213710464292,2.6084908112009075) circle (1.5pt);
\draw [fill=GREEN] (-0.9678760637960646,2.1903031012154925) circle (1.5pt);
\draw [fill=ffqqqq] (-0.5287789683113776,2.4203063417074717) circle (1.5pt);
\draw [fill=ududff] (2.733085169574865,2.4516704199563777) circle (1.5pt);
\draw [fill=ffqqqq] (1.7503440511091373,2.117120251968046) circle (1.5pt);
\draw [fill=ORANGE] (1.4576126541193462,2.723492431446897) circle (1.5pt);
\draw[color=ORANGE] (1.6196603917386947,3.005769135687052) node {$y_2$};
\draw [fill=GREEN] (2.063984833598199,2.744401816946168) circle (1.5pt);
\draw [fill=black] (-1.7415233272690835,2.483034498205283) circle (1.5pt);
\draw[color=black] (-1.668340478021636,2.634627543074996) node {$a$};
\draw [fill=BLUE] (0.08811501582332104,2.775776633100067) circle (1.5pt);
\draw[color=BLUE] (0.0,3.0998613704337705) node {$x_3^\prime $};
\draw [fill=ffqqqq] (-0.0897325197451726,2.397597042643703) circle (1.5pt);
\draw [fill=ORANGE] (-0.5145315049120799,1.4942935151409031) circle (1.5pt);
\draw[color=ORANGE] (-0.4,1.1) node {$ x_2^\prime $};
\draw [fill=GREEN] (-0.3051443564916225,1.9395397525057043) circle (1.5pt);

\draw [color=yqyqyq] (0.7334320182892102,2.272124792476036) circle (1.5pt);

\draw [fill=GREEN] (1.5493740449032818,2.533415264823032) circle (1.5pt);
\draw [fill=BLUE] (2.245067743691548,1.0923354601901984) circle (1.5pt);
\draw[color=BLUE] (2.5605827392058798,1.3748370667439338) node {$y_3^\prime $};
\draw (2.971802950302511,3.1166004155882816) circle (2pt);
	\draw[color=black] (3.1094541085617378,3.3769107282991078) node {$b$};
\draw [fill=black] (0.7514377706520492,4.186283261486294) circle (1.5pt);
\draw[color=black] (0.913968631138306,4.192376762770667) node {$L$};
\draw [fill=ORANGE] (0.012931014183288377,1.0692509026659955) ++(-1.5pt,0 pt) -- ++(1.5pt,1.5pt)--++(1.5pt,-1.5pt)--++(-1.5pt,-1.5pt)--++(-1.5pt,1.5pt);
\draw[color=ORANGE] (0.36509726178244784,1.1448338262519555) node {$\bar x_2$};
\draw [fill=BLUE] (0.43022033512646574,2.8305134841885704) ++(-1.5pt,0 pt) -- ++(1.5pt,1.5pt)--++(1.5pt,-1.5pt)--++(-1.5pt,-1.5pt)--++(-1.5pt,1.5pt);
\draw[color=BLUE] (0.7728302790182281,2.911676900940334) node {$\bar x_3$};
\draw [fill=GREEN] (0.17599243986354907,1.7574879917226667) ++(-1.5pt,0 pt) -- ++(1.5pt,1.5pt)--++(1.5pt,-1.5pt)--++(-1.5pt,-1.5pt)--++(-1.5pt,1.5pt);
\draw[color=GREEN] (0.5114629602773433,1.7512060057308072) node {$\bar x_4$};
\draw [fill=ffqqqq] (0.3225980396218014,2.3762695999178245) ++(-1.5pt,0 pt) -- ++(1.5pt,1.5pt)--++(1.5pt,-1.5pt)--++(-1.5pt,-1.5pt)--++(-1.5pt,1.5pt);
\end{scriptsize}
\end{tikzpicture}
    \caption{If $\ell_x\cap\bar\ell_x = L\in \ell_y$, then $\ell_x$ and $\bar\ell_x$ both construct the same point $b$.}
    \label{fig:four-points-move-ell}
\end{figure}
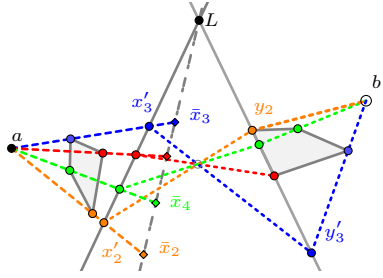

The proof of~\Cref{thm:n=4 loci invariants} can be operationalized by the  geometric construction in Figure~\ref{fig:hyperbola-construction}.

We conclude this section with a description of the camera centers variety $\EPV{4}$. 

\begin{theorem} \label{thm:main thm n=4}
    The camera centers variety $\EPV{4}$ is a hypersurface in $\PP^2 \times \PP^2$. 
    Furthermore,
    \begin{enumerate}
         \item When $\X$ and $\Y$ are in standard position, the equation of $\EPV{4}$ is 
        \begin{equation}\label{eq:E4-equation}
a^T [a \odot b]_\times b = 0.
        \end{equation}
        \item   $\EPV{4}$ projects surjectively onto both $\PP^2$ factors, i.e.,   $\pi_x(\EPV{4}) = \pi_y(\EPV{4})=\PP^2$.
         \item $\EPV{4}$ has bidegree $(2,2)$, ie.~$d_{(2,1)} (\EPV{4}) =d_{(1,2)} ( \EPV{4}) =2$. 
         \item The fibers $\pi_x^{-1}(a) $ and $\pi_y^{-1} (b)$ project onto conics in $\PP^2_y$ and $\PP^2_x,$ respectively.
     \end{enumerate}
\end{theorem}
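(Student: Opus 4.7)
The plan is to establish parts (1)--(5) in turn, building on~\Cref{lemma:dim-En}, the explicit fundamental matrix formula~\eqref{eq:ab-to-F}, and the description of fibers in~\Cref{thm:n=4 loci invariants}. Part (1) follows immediately from the dimension formula $\dim \EPV{4} = 7 - 4 = 3$, which is one less than $\dim (\PP^2 \times \PP^2) = 4$; irreducibility of $\EPV{4}$ (inherited from $V_4$) then ensures that it is cut out by a single irreducible bihomogeneous polynomial.

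For part (2), I would work in standard position, where $x_i = y_i = e_i$ for $i=1,2,3$ and $x_4 = y_4 \sim (1,1,1)^\top$. As in the proof of~\Cref{lemma:dim-En}, the three conditions $y_i^\top F x_i = 0$ for $i=1,2,3$ kill the diagonal entries of $F$; combined with $Fa=0$ and $F^\top b=0$, they pin $F$ down up to scalar as $F = \diag(a)[a\odot b]_\times \diag(b)$. The remaining condition $y_4^\top F x_4 = 0$ then becomes $(1,1,1) F (1,1,1)^\top = a^\top[a \odot b]_\times b = 0$, which can also be written as $\det[\,a \,|\, a \odot b \,|\, b\,] = 0$.

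For parts (3)--(5), I would exploit the fact that the polynomial $a^\top [a \odot b]_\times b$ is manifestly of bidegree $(2,2)$ in $(a,b)$ by inspection of the determinant above. For each fixed $a \in \PP^2_x$, the defining equation cuts out a plane conic in $\PP^2_y$; non-degeneracy for generic $a$ follows from the synthetic description in~\Cref{thm:n=4 loci invariants}, which identifies this conic as the one through $\Y$ with the prescribed conic cross-ratio. This gives part (5), and surjectivity in part (3) follows because each fiber is a nonempty conic. For the multidegrees in part (4), I would note that $d_{(3,0)}$ and $d_{(0,3)}$ vanish because three generic lines in $\PP^2$ have empty intersection, and that $d_{(2,1)}$ counts intersections of a generic fiber conic $\pi_x^{-1}(a_0)$ with a generic line in $\PP^2_y$, which equals $2$ by B\'ezout; $d_{(1,2)}=2$ follows symmetrically.

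The main technical step is the assertion used in part (2) that $F$ is determined up to scale by $Fa=0$, $F^\top b=0$, and the three standard-position conditions $e_i^\top F e_i = 0$. I expect this to reduce to checking that the resulting $6 \times 9$ linear system on the entries of $F$ has rank $8$ for generic $(a,b)$; this can be verified either by direct row reduction or simply by checking that the formula~\eqref{eq:ab-to-F} satisfies all six equations and has the correct rank. The general (non-standard) case is then recovered via the substitutions~\eqref{eq:substitutions}, which are homographies and therefore preserve bidegrees and irreducibility.
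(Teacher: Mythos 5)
Your proposal is correct and follows essentially the same path as the paper: dimension count via \Cref{lemma:dim-En} for the hypersurface claim, the standard-position formula $F=\diag(a)[a\odot b]_\times\diag(b)$ (the paper simply cites \eqref{eq:ab-to-F} from the proof of \Cref{lemma:dim-En}, whose uniqueness-up-to-scale you rightly flag as the key technical point) together with the fourth-point condition to get \eqref{eq:E4-equation}, and then fixing $a$ to get a conic in $b$, which yields the fiber description, surjectivity, and the bidegree count by slicing the conic with a line. One tiny slip: the linear system pinning down $F$ consists of nine equations (three diagonal conditions plus $Fa=0$ and $F^{\top}b=0$) in the nine entries of $F$, of rank $8$ for generic $(a,b)$ --- equivalently six equations of rank $5$ in the six off-diagonal entries --- not a ``$6\times 9$ system of rank $8$''; this does not affect the argument.
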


\begin{remark}
The conic $\omega$ in~\Cref{thm:n=4 loci invariants} is exactly the projection of $\pi_x^{-1} (a)$ into $\PP_y^2.$
\end{remark}

\begin{proof}
\Cref{lemma:dim-En} immediately gives $\dim (\EPV{4}) = 3$, so $\EPV{3} \subset \PP^2 \times \PP^2$ is a hypersurface.
Recall that $e_4 = (1,1,1)$. 
Using~\eqref{eq:ab-to-F}, and that $\X$ and $\Y$ are in standard position, we know that $0  = e_4^\top F e_4 = e_4^T \diag (a) [a\odot  b ]_{\times } \diag (b) e_4 = a^T [a\odot  b ]_{\times } b$. This gives part (1). 
For part (2), we argue that the projection $\pi_x : \EPV{4} \to \PP^2$ defined by $\pi_x (a,b) = a$ is surjective, with a similar argument in the case of $\pi_y.$
If we fix any $a$ in~\eqref{eq:E4-equation}, we obtain a nonzero quadratic equation in $b$.
This immediately gives (4); furthermore, letting $b$ be any point on this conic, we have $\pi_x (a,b) = a,$ proving surjectivity.
Finally, for part (3), let us note that slicing the variety $\EPV{4}$ with two hyperplanes in $a$ and one hyperplane in $b$ is the same as fixing $a$ and slicing the resulting conic in $b$ with a line.
This shows that $d_{(2,1)} (\EPV{4}) = 2$, and a similar argument gives $d_{(1,2)} (\EPV{4}) = 2$.
\end{proof}

\begin{remark}
Although~\Cref{thm:main thm n=4} states that $\EPV{4}$ is three-dimensional, the construction in~\Cref{fig:hyperbola-construction} has four degrees of freedom; two coming from the choice of $a,$ and another two from the choice of $\ell_x.$
This apparent discrepancy may be resolved as follows; the construction of $b$ in terms of $a$ and $\ell_x$ gives a dominant rational map
\begin{align*}
\beta  : \PP^2 \times \left(\PP^2\right)^\ast &\dashrightarrow \EPV{4}, \,\,\,\,\,\,
(a  ,  \ell_x) \mapsto (a, \, b(a, \ell_x)).
\end{align*}
Fixing $(a,b) \in \EPV{4},$ the fiber $\beta^{-1} (a,b)$ is exactly the set of all pairs $(a, \ell_x)$ that construct $b.$
Using the fiber-dimension theorem,
\[
\dim \left( \beta^{-1} (a,b)\right) = \dim \left( \PP^2 \times (\PP^2)^\ast \right) - \dim \left( \EPV{4} \right) = 4- 3 = 1.
\]
Thus, for fixed $a,$ there is a $1$-parameter family of lines $\ell_x$ such that $(a,\ell_x)$ construct the same $b.$
In fact, given the intersection point $L=\ell_x\cap\ell_y$, this family consists of all lines $\bar{\ell}_x$ such that $L=\bar{ \ell}_x\cap\ell_y$.
Why is this? Consider a homography that fixes points on $\ell_y$ and sends $ x^\prime \in\ell_x$ to $\bar x\in\bar \ell_x$ via a perspectivity centered at $a$ (Figure~\ref{fig:four-points-move-ell}). 
This homography preserves incidences; since the four lines $ x_i^\prime y_i^\prime  $ are coincident at the point $O= (\ONE{x_1^\prime y_1})\cap(\TWO{x_2^\prime y_2})$ determined in step (4) of 
Figure~\ref{fig:hyperbola-construction}, the four lines $\bar x_i y_i^\prime  $ must be coincident at some $\bar O = (\ONE{\bar x_1y_1})\cap(\TWO{\bar x_2y_2})$, and so the line $\bar \ell_x$ constructs the same points $y_i^\prime \in\ell_y$, and thus the same camera center $b$.
\end{remark}
\section{Camera loci when $n=5$}\label{sec:5-points}

We now answer Question~\ref{question:q2} when $\X$ and $\Y$ consist of 5 generic labeled points in $\PP^2$. 

\begin{theorem} \label{thm:n=5 loci invariants}
     Let $\X = \{x_1,\ldots, x_5\} \subset \PP^2_x$ and $\Y = \{y_1, \ldots, y_5\} \subset \PP^2_y$ be two sets of generic labeled points that can be imaged by flatland cameras to $\Pcal \simeq \Qcal$ in $\PP^1$. Then the first camera center $a \in \PP^2_x$ can be any point other than $x_1, \ldots, x_5$. Having chosen $a$, the other camera center $b$ is the unique point of intersection of $5$ conics $\omega_y^i$ passing through $\Y \setminus \{y_i\}$.
\end{theorem}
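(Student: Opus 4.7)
The plan is to reduce the $n=5$ case to the $n=4$ case by considering each of the $\binom{5}{4}=5$ subsets of labeled pairs obtained by omitting one index at a time. Specifically, if a camera pair $(A,B)$ with centers $(a,b)$ images all of $\mathcal{X}$ and $\mathcal{Y}$ to the same $\PP^1$-image, then for any $i \in \{1,\ldots,5\}$ the same pair also images the 4-point subsets $\mathcal{X}\setminus\{x_i\}$ and $\mathcal{Y}\setminus\{y_i\}$ to the same image. Applying Theorem~\ref{thm:n=4 loci invariants} to each such subset, $b$ must lie on a unique conic $\omega_y^i$ through $\mathcal{Y}\setminus\{y_i\}$ whose conic cross-ratio equals the planar cross-ratio $(x_j,x_k;x_l,x_m;a)$ for $\{j,k,l,m\}=\{1,\ldots,5\}\setminus\{i\}$. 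Consequently $b \in \bigcap_{i=1}^{5}\omega_y^i$.

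Uniqueness of $b$ in this intersection follows from B\'ezout's theorem. For any two distinct indices $i\ne j$, the conics $\omega_y^i$ and $\omega_y^j$ both pass through the three common points $\mathcal{Y}\setminus\{y_i,y_j\}$. For generic $\mathcal{X}, \mathcal{Y}$ and $a$ these conics are distinct: otherwise a single conic would pass through all five points of $\mathcal{Y}$ and simultaneously satisfy two cross-ratio conditions that generically disagree. Being distinct, they meet in at most $4$ points, leaving at most a single additional intersection beyond the three shared $y$'s. Any valid $b$ must coincide with this extra intersection, so $b$ is uniquely determined by $a$.

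To justify that $a$ may be chosen as any point of $\PP^2_x\setminus\mathcal{X}$, I would invoke Lemma~\ref{lemma:dim-En}, which gives $\dim \EPV{5}=2=\dim \PP^2_x$. Since $\EPV{5}$ is irreducible (as the image of the irreducible variety $\FE{5}$ under a coordinate projection) and the morphism $\pi_x\colon \EPV{5}\to\PP^2_x$ has generically finite (indeed, singleton) fibers by the previous paragraph, its image is a closed irreducible subvariety of $\PP^2_x$ of dimension $2$, hence equal to all of $\PP^2_x$. So every $a$ outside $\mathcal{X}$ lifts to a point $(a,b)\in \EPV{5}$.

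The main obstacle I anticipate is confirming that the extra pairwise intersection point of $\omega_y^i$ and $\omega_y^j$ is consistent across all $\binom{5}{2}$ pairs. The B\'ezout bound handles uniqueness on each pair, but consistency of the ``extra'' point across pairs---equivalently, that the five conics actually share a common point for every choice of $a$---ultimately rests on either (i) the hypothesis that $\EPV{5}$ is nonempty combined with the surjectivity argument above, or (ii) a direct verification via Theorem~\ref{thm:pull back tool} using the six generators of $R$ from Example~\ref{ex:min gens}, checking that the projective vector $g'(\mathcal{X},a)$ determines $b$ uniquely. Either route completes the argument.
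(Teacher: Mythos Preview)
Your proposal is correct and mirrors the paper's approach: reduce to $n=4$ on each four-point subset, obtain five conics $\omega_y^i$ through $\Y\setminus\{y_i\}$, and invoke B\'ezout on a pair of them for uniqueness of $b$.

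For the consistency obstacle you flag, the paper gives a shorter argument than your routes (i) and (ii): if $b$ is the fourth intersection of $\omega_y^1$ and $\omega_y^2$, then by the $n=4$ theorem there is a camera centered at $b$ imaging $y_2,y_3,y_4,y_5$ correctly (from $\omega_y^1$) and one imaging $y_1,y_3,y_4,y_5$ correctly (from $\omega_y^2$); sharing a center and agreeing on the three image points $Ax_3,Ax_4,Ax_5$, these cameras differ by a homography of $\PP^1$ fixing three points, hence coincide. Thus $b$ images all of $\Y$ correctly and lies on every $\omega_y^i$. This simultaneously shows that every $a\in\PP_x^2\setminus\X$ yields a valid $b$, so your dimension/irreducibility argument for surjectivity of $\pi_x$ (while valid) is not needed.
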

For $5$ labeled points in $\PP^1$, 
the generators of $R$ come from the 
 graded piece $R_\twos$ which is spanned by the $6$ bracket monomials (c.f. Figure~\ref{fig:six-pentagon-monomials}):
 \begin{align}
 \begin{split}
     m_1 = [12][23][34][45][15], \quad  m_2 = [12][25][15][34]^2, \quad m_3 = [12][23][13][45]^2, \\
     m_4 = [23][34][24][15]^2, \quad m_5 = [34][45][35][12]^2, \quad m_6 = [14][45][15][23]^2,
 \end{split}
 \end{align}
 and hence, we may choose $g = (m_1,m_2,m_3,m_4,m_5,m_6)$.

 \begin{lemma} \label{lem:4 wise crs}
     If $\Pcal^i := \Pcal \setminus \{p_i\}$ and $\Qcal^i := \Qcal \setminus \{q_i\}$, then $\Pcal = \{p_1, \ldots, p_5\} \simeq \Qcal = \{q_1, \ldots, q_5\}$ in $\PP^1$ 
     if and only if the $4$-point cross-ratios of $\Pcal^i$ and $\Qcal^i$ coincide for all $i=1,\ldots,5$.
 \end{lemma}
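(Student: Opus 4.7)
The plan is to prove both directions using~\Cref{lem:cr=homography}, the fact that a homography of $\PP^1$ is uniquely determined by its action on three distinct points.

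For the forward direction, suppose $\mathcal{P} \simeq \mathcal{Q}$. Then there exists $H \in \PGL(2)$ with $Hp_j \sim q_j$ for all $j$. Restricting to any index set $[5]\setminus\{i\}$, $H$ maps the four-tuple $\mathcal{P}^i$ to $\mathcal{Q}^i$ (up to scalars), and since the cross-ratio is a projective invariant of four labeled points in $\PP^1$, the cross-ratios of $\mathcal{P}^i$ and $\mathcal{Q}^i$ coincide.

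For the reverse direction, the key observation is that we only need two of the five cross-ratio equalities. First I would apply~\Cref{lem:cr=homography} to $\mathcal{P}^5$ and $\mathcal{Q}^5$: the equality of these four-point cross-ratios yields a homography $H \in \PGL(2)$ with $Hp_j \sim q_j$ for $j=1,2,3,4$. Next I would apply~\Cref{lem:cr=homography} to $\mathcal{P}^1$ and $\mathcal{Q}^1$ to obtain a homography $H' \in \PGL(2)$ with $H'p_j \sim q_j$ for $j=2,3,4,5$. Since the points $p_2, p_3, p_4$ are distinct and a homography of $\PP^1$ is determined by its action on three points, we conclude $H = H'$, and therefore $Hp_5 \sim H'p_5 \sim q_5$, giving $\mathcal{P} \simeq \mathcal{Q}$.

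The main subtlety, rather than an obstacle, is to ensure that the relevant homographies are well-defined at each step, which is why the distinctness hypothesis on the points is needed (the cross-ratio being a generic value avoids the $0, 1, \infty$ degeneracies discussed after~\eqref{eq:cross-ratio}). One could alternatively deduce the result as a direct corollary of~\Cref{lem:signatures of orbits}, since the six generators $m_1, \ldots, m_6$ of $R_{\twos}$ for $n=5$ factor into products of brackets supported on four-element subsets of $[5]$, so knowing all four-point cross-ratios determines the ratios $g(P) \sim g(Q)$; however, the direct argument above is more transparent.
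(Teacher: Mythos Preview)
Your proof is correct but takes a different route from the paper's. The paper argues entirely through the invariant-theoretic framework: it invokes \Cref{lem:signatures of orbits} and then computes that each ratio $m_i/m_1$ (for $i=2,\ldots,6$) equals, up to inversion, one of the five four-point cross-ratios, so that $g(\mathcal{P})\sim g(\mathcal{Q})$ is equivalent to the coincidence of all five cross-ratios. Your argument instead works directly with homographies via \Cref{lem:cr=homography} and the three-point determination of an element of $\PGL(2)$; this is more elementary and, as you note, shows that only two of the five cross-ratio equalities are actually needed for the reverse implication. The paper's approach has the advantage of fitting seamlessly into the surrounding narrative (the generators $m_1,\ldots,m_6$ are exactly those used in the $n=5$ analysis), while yours is self-contained and makes the minimal hypotheses visible. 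Interestingly, the alternative you sketch at the end---deducing the result from \Cref{lem:signatures of orbits} via the bracket factorizations of the $m_i$---is precisely what the paper does.
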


 \begin{proof}
By Lemma~\ref{lem:signatures of orbits}, $\Pcal \simeq \Qcal$ if and only if $g(\Pcal) = g(\Qcal)$. 
Note that $m_2/m_1 = [25][34]/[23][45]$ is the inverse of the 
cross-ratio of points indexed $2,3,4,5$, $m_3/m_1$ is  
the cross-ratio of points indexed $1,3,4,5$, $m_4/m_1$ 
is the inverse of the cross-ratio of the points indexed $1,2,4,5$, 
$m_5/m_1$ is the cross-ratio of the points indexed 
$1,2,3,5$ and finally, $m_6/m_1$ is the inverse of the cross-ratio 
of the points $1,2,3,4$. This proves the statement.
\end{proof}

\begin{proof}[Proof of~\Cref{thm:n=5 loci invariants}]
      Since $\Pcal \simeq \Qcal$, by Lemma~\ref{lem:4 wise crs},  the $4$-point cross-ratio of $\Pcal \setminus \{p_i\}$ coincides 
     with the $4$-point cross-ratio of $\Qcal \setminus \{q_i\}$ for $i=1, \ldots, 5$. Pick the first camera center $a$ arbitrarily in $\PP^2_x \setminus \X$, and let $\omega_x^i$ be the conic passing through $\X^i := \X \setminus \{x_i\}$ and $a$, and suppose its 
     conic cross-ratio with respect to $\X^i$ is $\lambda_i$. Note that $a$ lies on the intersection of the $5$ conics $\omega_x^i$.
     
     For $i=1,\ldots,5$, let $\omega_y^i$ be the unique conic through $\Y^i := \Y \setminus \{y_i\}$ with conic cross-ratio $\lambda_i$ with respect to $\Y^i$. Since the camera $A$ sends $\X^i$ to $\Pcal^i$ and $B$ sends $\Y^i$ to $\Qcal^i$, and $\Pcal^i \simeq \Qcal^i$, we can apply  Theorem~\ref{thm:n=4 loci invariants} to get that $b$ must lie on each of the conics $\omega_y^i$.
     The conics $\omega_y^1$ and $\omega_y^2$ contain the common points $y_3,y_4,y_5$ and hence, generically, must intersect in an additional point $b \neq y_3,y_4,y_5$. Since the point $b$ can be a camera center that correctly projects all the 
     $y$ points on its conic, we get from 
     $\omega_y^1$ that it correctly projects 
     $y_2, y_3, y_4,y_5$ and from $\omega_y^2$ that it correctly projects $y_1$ as well. Therefore, it must be that 
     $b \in \bigcap_{i=1}^5 \omega_y^i$.
     \end{proof}

\begin{figure}[ht]
\begin{center}
    %. FULL PICTURE

\begin{tikzpicture}[scale=0.7,
	line cap=round,line join=round,>=triangle 45,x=1.0cm,y=1.0cm]

\clip(-2.708101659730916,-1.7014202585919824) rectangle (13.48256455875219,6.226998765922938);

% FIVE X
\fill[line width=1.0pt,color=GRAY,fill=GRAY,fill opacity=0.1] 
	(-0.3305456708301174,4.6825528903906175) -- (0.64,4.16) -- (0.72,1.94) -- (-1.34,2.82) -- (-0.38,3.68) -- cycle;
	\draw [color=GRAY]  (-0.3305456708301174,4.6825528903906175) -- (0.64,4.16) -- (0.72,1.94) -- (-1.34,2.82) -- (-0.38,3.68) -- cycle;

% FIVE Y
\fill[line width=1.0pt,color=GRAY,fill=GRAY,fill opacity=0.1] 
	(3.82,2.02) -- (5.118376078924618,1.9227529675481412) -- (6.04,2.58) -- (6.920215665350591,4.520705539136584) -- (3.86,3.64) -- cycle;
	\draw [color=GRAY] (3.82,2.02) -- (5.118376078924618,1.9227529675481412) -- (6.04,2.58) -- (6.920215665350591,4.520705539136584) -- (3.86,3.64) -- cycle;

%% FOUR X
%\fill[line width=1.0pt,color=GRAY,fill=GRAY,fill opacity=0.1] 
%	(0.64,4.16) -- (-0.3305456708301174,4.6825528903906175) -- (-0.38,3.68) -- (0.72,1.94) -- cycle;
%	\draw [color=GRAY] (0.64,4.16) -- (-0.3305456708301174,4.6825528903906175) -- (-0.38,3.68) -- (0.72,1.94) -- cycle;
%% FOUR Y
%\fill[line width=1.0pt,color=GRAY,fill=GRAY,fill opacity=0.1] 
%	(3.82,2.02) -- (5.118376078924618,1.9227529675481412) -- (6.04,2.58) -- (3.86,3.64) -- cycle;
%	\draw [color=GRAY] (3.82,2.02) -- (5.118376078924618,1.9227529675481412) -- (6.04,2.58) -- (3.86,3.64) -- cycle;

\draw [line width=1.1pt,domain=-2.708101659730916:13.48256455875219] plot(\x,{(--6.1076-1.62*\x)/-0.04});
\draw [line width=1.1pt,color=CANVAS,domain= 1 : 2] 
	plot(\x,{(--13.390682369111895-9.597559871781217*\x)/0.05563802824220998});

\draw [line width=1.0pt,dotted,color=purple] (-2.177011390146186,4.04952866062554)-- (-1.34,2.82);
\draw [line width=1.0pt,dotted,color=purple] (-1.34,2.82)-- (1.4022213752941473,-1.2081886416327916);
\draw [line width=1.0pt,dash pattern=on 3pt off 3pt,color=purple] (3.9034581251202867,5.400054067371595)-- (2.924305870239852,2.813143512115842);
\draw [line width=1.0pt,dash pattern=on 3pt off 3pt,color=purple] (2.924305870239852,2.813143512115842)-- (1.4022213752941473,-1.2081886416327916);
\draw [line width=1.0pt,dotted,color=purple] (3.9034581251202867,5.400054067371595)-- (6.920215665350591,4.520705539136584);
\draw [line width=1.0pt,dotted,color=purple] (6.920215665350591,4.520705539136584)-- (9.795301671020107,3.682652558769602);
\draw [line width=1.0pt,dotted,color=purple] (9.795301671020107,3.682652558769602)-- (12.901682094179916,2.7771800384174714);
\draw [samples=50,domain=-0.99:0.99,rotate around={-107.37587681600097:(5.232723278254678,2.304345571199109)},xshift=5.232723278254678cm,yshift=2.304345571199109cm,line width=1.0pt,color=purple] plot ({0.39832169154928954*(1+(\x)^2)/(1-(\x)^2)},{0.8868623380858361*2*(\x)/(1-(\x)^2)});
\draw [samples=50,domain=-0.99:0.99,rotate around={-107.37587681600097:(5.232723278254678,2.304345571199109)},xshift=5.232723278254678cm,yshift=2.304345571199109cm,line width=1.0pt,color=purple] plot ({0.39832169154928954*(-1-(\x)^2)/(1-(\x)^2)},{0.8868623380858361*(-2)*(\x)/(1-(\x)^2)});

\draw [line width=1.0pt,dotted,color=ORANGE] (-2.177011390146186,4.04952866062554)-- (0.72,1.94);
\draw [line width=1.0pt,dotted,color=ORANGE] (0.72,1.94)-- (1.3867857082862747,1.4544639172252556);
\draw [line width=1.0pt,dotted,color=GREEN] (-2.177011390146186,4.04952866062554)-- (-0.38,3.68);
\draw [line width=1.0pt,dotted,color=GREEN] (-0.38,3.68)-- (1.3759773287705541,3.318909383686984);
\draw [line width=1.0pt,dotted,color=RED] (-2.177011390146186,4.04952866062554)-- (0.64,4.16);
\draw [line width=1.0pt,dotted,color=RED] (0.64,4.16)-- (1.3709352718220307,4.188664207307314);
\draw [line width=1.0pt,dotted,color=BLUE] (-2.177011390146186,4.04952866062554)-- (-0.3305456708301174,4.6825528903906175);
\draw [line width=1.0pt,dotted,color=BLUE] (-0.3305456708301174,4.6825528903906175)-- (1.3647029754047957,5.263735339280401);
\draw [line width=1.0pt,dash pattern=on 3pt off 3pt,color=BLUE] (1.3647029754047957,5.263735339280401)-- (2.924305870239852,2.813143512115842);
\draw [line width=1.0pt,dash pattern=on 3pt off 3pt,color=BLUE] (2.924305870239852,2.813143512115842)-- (3.805399695736176,1.4286876773151478);
\draw [line width=1.0pt,dash pattern=on 3pt off 3pt,color=RED] (1.3709352718220307,4.188664207307314)-- (2.924305870239852,2.813143512115842);
\draw [line width=1.0pt,dash pattern=on 3pt off 3pt,color=RED] (2.924305870239852,2.813143512115842)-- (3.82,2.02);
\draw [line width=1.0pt,dash pattern=on 3pt off 3pt,color=ORANGE] (1.3867857082862747,1.4544639172252556)-- (2.924305870239852,2.813143512115842);
\draw [line width=1.0pt,dash pattern=on 3pt off 3pt,color=ORANGE] (2.924305870239852,2.813143512115842)-- (3.86,3.64);
\draw [line width=1.0pt,dash pattern=on 3pt off 3pt,color=GREEN] (1.3759773287705541,3.318909383686984)-- (2.924305870239852,2.813143512115842);
\draw [line width=1.0pt,dash pattern=on 3pt off 3pt,color=GREEN] (2.924305870239852,2.813143512115842)-- (3.8322606795241616,2.516557520728565);
\draw [line width=1.0pt,dotted,color=GREEN] (3.8322606795241616,2.516557520728565)-- (6.04,2.58);
\draw [line width=1.0pt,dotted,color=GREEN] (6.04,2.58)-- (12.901682094179916,2.7771800384174714);
\draw [line width=1.0pt,dotted,color=BLUE] (9.795301671020107,3.682652558769602)-- (5.118376078924618,1.9227529675481412);
\draw [line width=1.0pt,dotted,color=BLUE] (5.118376078924618,1.9227529675481412)-- (3.805399695736176,1.4286876773151478);
\draw [rotate around={10.747206189696287:(6.380230955630514,3.2478259068709936)},line width=1.0pt,color=GREEN] (6.380230955630514,3.2478259068709936) ellipse (3.4919208229843264cm and 1.178710504428178cm);

\draw [samples=50,domain=-0.99:0.99,rotate around={103.10387884276047:(4.958379277728053,3.1112224537656856)},xshift=4.958379277728053cm,yshift=3.1112224537656856cm,line width=1.0pt,color=BLUE] plot ({0.7226785594132924*(1+(\x)^2)/(1-(\x)^2)},{2.7686142433350254*2*(\x)/(1-(\x)^2)});
\draw [samples=50,domain=-0.99:0.99,rotate around={103.10387884276047:(4.958379277728053,3.1112224537656856)},xshift=4.958379277728053cm,yshift=3.1112224537656856cm,line width=1.0pt,color=BLUE] plot ({0.7226785594132924*(-1-(\x)^2)/(1-(\x)^2)},{2.7686142433350254*(-2)*(\x)/(1-(\x)^2)});

\begin{scriptsize}
		\draw [fill=blue] (-0.3305456708301174,4.6825528903906175) circle (1.5 pt);
\draw [fill=RED] (0.64,4.16) circle (1.5pt);
\draw [fill=ORANGE] (0.72,1.94) circle (1.5pt);
\draw [fill=purple] (-1.34,2.82) circle (1.5pt);
\draw [fill=GREEN] (-0.38,3.68) circle (1.5pt);
\draw [color=RED] (3.82,2.02)-- ++(-1.5pt,-1.5pt) -- ++(3.0pt,3.0pt) ++(-3.0pt,0) -- ++(3.0pt,-3.0pt);

\draw [color=BLUE] (5.118376078924618,1.9227529675481412)-- ++(-1.5pt,-1.5pt) -- ++(3.0pt,3.0pt) ++(-3.0pt,0) -- ++(3.0pt,-3.0pt);

\draw [color=GREEN] (6.04,2.58)-- ++(-1.5pt,-1.5pt) -- ++(3.0pt,3.0pt) ++(-3.0pt,0) -- ++(3.0pt,-3.0pt);
\draw [color=purple] (6.920215665350591,4.520705539136584)-- ++(-1.5pt,-1.5pt) -- ++(3.0pt,3.0pt) ++(-3.0pt,0) -- ++(3.0pt,-3.0pt);
\draw [color=ORANGE] (3.86,3.64)-- ++(-1.5pt,-1.5pt) -- ++(3.0pt,3.0pt) ++(-3.0pt,0) -- ++(3.0pt,-3.0pt);
\draw [fill=CANVAS] (1.3534744019504117,7.200664260161612) ++(-1.5pt,0 pt) -- ++(1.5pt,1.5pt)--++(1.5pt,-1.5pt)--++(-1.5pt,-1.5pt)--++(-1.5pt,1.5pt);
\draw [fill=CANVAS] (1.4091124301926217,-2.3968956116196045) ++(-1.5pt,0 pt) -- ++(1.5pt,1.5pt)--++(1.5pt,-1.5pt)--++(-1.5pt,-1.5pt)--++(-1.5pt,1.5pt);
\draw [color=GRAY] (-2.177011390146186,4.04952866062554) circle (1.5pt);
\draw [fill=BLUE] (1.3647029754047957,5.263735339280401) circle (1.5pt);
\draw [fill=RED] (1.3709352718220307,4.188664207307314) circle (1.5pt);
\draw [fill=GREEN] (1.3759773287705541,3.318909383686984) circle (1.5pt);
\draw [fill=ORANGE] (1.3867857082862747,1.4544639172252556) circle (1.5pt);
\draw [fill=purple] (1.4022213752941473,-1.2081886416327916) circle (1.5pt);
\draw [fill=uuuuuu] (2.924305870239852,2.813143512115842) circle (1.5pt);
\draw [fill=BLUE,shift={(3.805399695736176,1.4286876773151478)},rotate=270] (0,0) ++(0 pt,2.25pt) -- ++(1.9485571585149868pt,-3.375pt)--++(-3.8971143170299736pt,0 pt) -- ++(1.9485571585149868pt,3.375pt);
\draw [fill=GREEN,shift={(3.8322606795241616,2.516557520728565)},rotate=180] (0,0) ++(0 pt,2.25pt) -- ++(1.9485571585149868pt,-3.375pt)--++(-3.8971143170299736pt,0 pt) -- ++(1.9485571585149868pt,3.375pt);
\draw [fill=purple,shift={(3.9034581251202867,5.400054067371595)},rotate=180] (0,0) ++(0 pt,2.25pt) -- ++(1.9485571585149868pt,-3.375pt)--++(-3.8971143170299736pt,0 pt) -- ++(1.9485571585149868pt,3.375pt);
\draw [color=GREEN] (9.795301671020107,3.682652558769602) circle (1.5pt);
\draw [color=BLUE] (12.901682094179916,2.7771800384174714) circle (1.5pt);
\draw [color=purple] (6.933219615781861,2.6056679157876603) circle (1.5pt);
\fill (8.741241413491174,2.773018958368121) circle (2.5pt);
	\draw(8.7,3.0) node {$b$};
	    	\draw (-2.3, 4.3) node {$a$};
		
	\draw [color=RED] (0.56, 4.5) node {$ x_4$};
    \draw [color=ORANGE] (0.6, 1.7) node {$ x_5$};
	\draw [color=BLUE] (-0.33, 4.9) node {$x_3$};
	\draw [color=GREEN] (-0.8, 3.6) node {$ x_2$};
	\draw [color=purple] (-1.4, 2.6) node {$ x_1$};
		
	\draw [color=BLUE] (9.1, 5.8) node {$\omega^3_y$};
	\draw [color=GREEN] (9.5, 4.5) node {$\omega^2_y$};
	\draw [color=purple] (12.4, 3.4) node {$\omega^1_y$};

\end{scriptsize}
\end{tikzpicture}
\caption{The conics $\omega_y^1$, $\omega_y^2$  and $\omega_y^3$ intersect in the common point $b$. Here $3$ conics are shown, but any two are enough to find $b$.}
\label{fig:five-tuple-construction}
\end{center}
\end{figure}
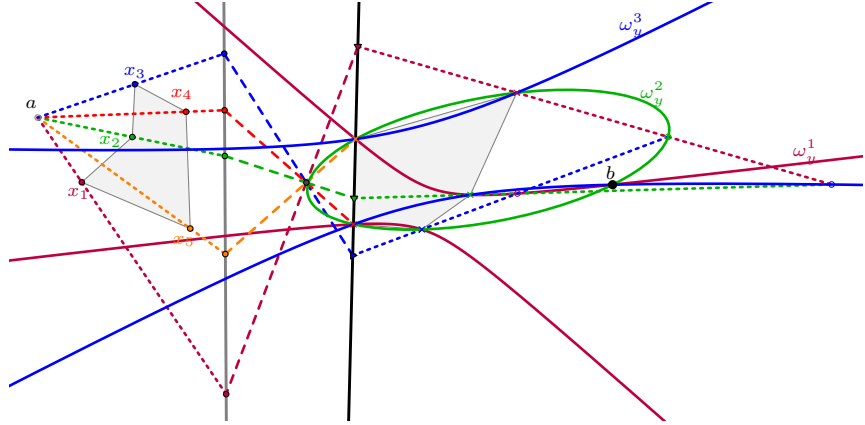

We call the method of determining $b$ from $a$ used in~\Cref{thm:n=5 loci invariants} the {\em intersecting conic construction}, and illustrate it in Figure~\ref{fig:five-tuple-construction}. 
This method was previously observed by Tomas Werner~\cite{werner}.
The construction also works for $n=6$ or $7$ generic points, but subject to the caveat that the point $a$ must then be chosen much more carefully.

In the proof of~\Cref{thm:n=5 loci invariants}, we saw that $b$ is the $4$th point of intersection of two conics that share three points, namely $y_1, y_2, y_3.$
It is known classically that there is an explicit formula for the fourth intersection point as a rational function of $y_1, y_2, y_3$ and the two conics.
To obtain such a formula, it will be convenient to recall the fundamental notion of a Cremona transformation.
\begin{definition}
A birational automorphism $f:\PP^2 \dashedrightarrow \PP^2$ is known as a \textbf{Cremona transformation}. 
It has degree $n$ if it can be defined with forms of degree $n$ having no common factor. 
\end{definition}

The most well-known Cremona transformation is the degree-$2$ \emph{quadratic transformation}
\begin{align}
f_q : \PP^2 &\dashrightarrow \PP^2, \,\,\,\,\,\,
[x_1:x_2:x_3] \mapsto [x_2x_3 : x_1x_3 : x_1x_2].
\end{align}
We now recall some standard facts concerning the quadratic Cremona $f_q.$
Note first that $f_q$ is defined on the set $\PP^2 \setminus \{ e_1, e_2, e_3 \}$.
The points $e_1, e_2, e_3$ where $f_q$ is undefined are known as the \emph{base points} of $f_q.$
Each base point pulls back under $f_q$ to a line:
\begin{equation}\label{eq:exceptional-lines-quadratic-cremona}
f_q^{-1} (e_1) = \langle e_2, e_3 \rangle ,
\quad 
f_q^{-1} (e_2) = \langle e_1, e_3 \rangle ,
\quad 
f_q^{-1} (e_3) = \langle e_1, e_2 \rangle .
\end{equation}
Observe also that for any point $[x_1:x_2:x_3] \notin \{ e_1,e_2,e_3\} \cup f_q^{-1} (\{ e_1, e_2, e_3 \})$, we have
\begin{align*}
f_q ( f_q ( [x_1:x_2:x_3])) &= f_q ([x_2x_3 : x_1x_3 : x_1x_2])\\
&= [ (x_1 x_3) (x_1 x_2) : (x_2 x_3) (x_1 x_2) : (x_2 x_3) (x_1 x_3)]\\
&= [x_1^2 x_2 x_3 : x_1 x_2^2 x_3 : x_1 x_2 x_3^3] = [x_1 : x_2 : x_3].
\end{align*}
From these properties, we see that $f_q$ transforms the intersection of two generic lines $\ell , \ell'$,
\[
\ell \cap \ell ' \, \, :\, \,  c_1 x_1 + c_2 x_2 + c_3 x_3 = d_1 x_1 + d_2 x_2 + d_3 x_3 = 0,
\]
into the fourth intersection point of two generic conics $\omega,$ $\omega'$ passing through $e_1, e_2, e_3$,
\[
c_1 x_2 x_3 + c_2 x_1 x_3 + c_3 x_1 x_2  = 
d_1 x_2 x_3 + d_2 x_1 x_3 + d_3 x_1 x_2 = 0,
\]
and vice-versa. We conclude (cf.~\cite[Footnote 7]{werner}):

\begin{lemma} \label{lem:sturm} If $\omega$ and $\omega '$ are generic conics through $e_1, e_2, e_3,$ they intersect in the fourth point
\begin{equation}\label{eq:sturm}
f_q \left( f_q (\omega) \cap f_q (\omega ') \right) \in \PP^2.
\end{equation}
 \end{lemma}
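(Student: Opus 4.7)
The plan is to show that the quadratic Cremona $f_q$ sends the pencil of conics through $e_1,e_2,e_3$ birationally onto the space of lines in $\PP^2$, in a manner that matches up fourth intersection points with intersection points of lines. This reduces the claim to the trivial fact that two generic lines meet in a single point.

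First, I would observe that a conic $\omega$ passing through $e_1,e_2,e_3$ has vanishing coefficients on $x_1^2, x_2^2, x_3^2$, so it takes the form
\[
\omega \, : \, c_1 x_2 x_3 + c_2 x_1 x_3 + c_3 x_1 x_2 = 0
\]
for some $[c_1:c_2:c_3]\in\PP^2$. Next, using that $f_q\circ f_q = \mathrm{id}$ on the open set where $f_q$ is defined and $f_q^{-1}$ avoids the base points, I would pull back the equation of $\omega$ along $f_q$. Writing $[y_1:y_2:y_3] = f_q([x_1:x_2:x_3])$, so that $[x_1:x_2:x_3] = [y_2y_3:y_1y_3:y_1y_2]$, substitution gives
\[
c_1 (y_1 y_3)(y_1 y_2) + c_2 (y_2 y_3)(y_1 y_2) + c_3 (y_2 y_3)(y_1 y_3) \;=\; y_1 y_2 y_3 \bigl( c_1 y_1 + c_2 y_2 + c_3 y_3 \bigr).
\]
The coordinate lines $y_i=0$ are the exceptional lines~\eqref{eq:exceptional-lines-quadratic-cremona} contracted to the base points, so the proper transform of $\omega$ under $f_q$ is the line $\ell(\omega): c_1 y_1 + c_2 y_2 + c_3 y_3 = 0$. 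The same is true for $\omega'$, producing a line $\ell(\omega')$.

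Now I would put the pieces together. Two conics $\omega,\omega'$ through $e_1,e_2,e_3$ meet in those three base points plus one residual point $p$ by B\'ezout; generically, $p\notin\{e_1,e_2,e_3\}$ and $p$ lies on none of the coordinate lines, so $f_q$ is defined at $p$ and $f_q(p)\in\ell(\omega)\cap\ell(\omega')$. Since the two lines meet in a unique point, $f_q(\omega)\cap f_q(\omega') = \{f_q(p)\}$ (as sets in $\PP^2$, away from the base locus), and applying the involution $f_q$ recovers the formula~\eqref{eq:sturm}.

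The main subtlety I expect to need to check carefully is the genericity hypothesis: namely, that for generic $\omega,\omega'$ the residual intersection $p$ avoids the three coordinate lines, that $\ell(\omega)$ and $\ell(\omega')$ are distinct (equivalently $[c_1:c_2:c_3]\neq [c_1':c_2':c_3']$), and that their intersection point avoids $\{e_1,e_2,e_3\}$, so that both applications of $f_q$ are well-defined and single-valued. Each of these is an open non-vanishing condition on the pair $(\omega,\omega')$, which is automatic from the genericity assumption in the statement.
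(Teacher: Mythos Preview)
Your proof is correct and follows essentially the same approach as the paper: both argue that a conic through $e_1,e_2,e_3$ has the form $c_1 x_2 x_3 + c_2 x_1 x_3 + c_3 x_1 x_2 = 0$, that $f_q$ carries it to the line $c_1 y_1 + c_2 y_2 + c_3 y_3 = 0$, and hence that the residual fourth intersection corresponds under $f_q$ to the unique intersection of two lines. You are somewhat more explicit about the factorization $y_1 y_2 y_3(c_1 y_1 + c_2 y_2 + c_3 y_3)$ and the genericity checks, but the substance is the same.
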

 In general, if $\omega $ and $\omega '$ are generic conics passing through three given points $x_1, x_2, x_3,$ we may find the fourth point of intersection as follows: choose an additional generic point $x_4,$ change coordinates using the homography $H_x$ in~\eqref{eq:Hy}, apply~\Cref{lem:sturm}, then undo the homography $H_x.$

We conclude this section with an analysis of the camera centers variety $\EPV{5}.$
\begin{theorem} \label{thm:main thm n=5}
    The camera centers variety $\EPV{5}$ is a surface in $\PP^2 \times \PP^2$. 
    Furthermore,
    \begin{enumerate}
        \item Both of the coordinate projections $\pi_x : \EPV{5} \to \PP_x^2$, $\pi_y : \EPV{5} \to \PP_y^2$ are surjective and have rational inverses, $\pi_x^{-1} : \PP_x^2 \dashrightarrow \EPV{5}$, $\pi_y^{-1} : \PP_y^2 \dashrightarrow \EPV{5}$. The composite rational map $\pi_y \circ \pi_x^{-1}:\PP^2_x \dashrightarrow \PP^2_y$ taking $a \rightarrow b$ is a degree-$5$ Cremona transformation of $\PP^2$.
        \item The multidegrees of $\EPV{5}$ are 
        $d_{(2,0)} \left( \EPV{5} \right) = 
d_{(0,2)} \left( \EPV{5} \right) = 1$ and $d_{(1,1)} \left( \EPV{5} \right) = 5.$
     \end{enumerate}
\end{theorem}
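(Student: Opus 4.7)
The plan is to combine \Cref{thm:n=5 loci invariants} with the generic-freeness machinery of \Cref{sec:epv}. By \Cref{lemma:dim-En}, $\EPV{5}$ is a surface. \Cref{thm:n=5 loci invariants} asserts that for generic $a \in \PP^2_x$ there is exactly one $b \in \PP^2_y$ with $(a,b) \in \EPV{5}$; this makes $\pi_x$ surjective and gives a rational inverse $a \mapsto (a, b(a))$. Swapping the roles of $\X$ and $\Y$ in that theorem yields the symmetric statement for $\pi_y$, so the composite $f := \pi_y \circ \pi_x^{-1}$ is a birational self-map of $\PP^2$---i.e., a Cremona transformation---and $\EPV{5}$ is (the closure of) its graph.

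The single-point fibers just identified give $d_{(2,0)}(\EPV{5}) = d_{(0,2)}(\EPV{5}) = 1$. For the remaining multidegree, note that the preimage under $f$ of a generic line $L_y \subset \PP^2_y$ is a plane curve of degree $\deg f$ in $\PP^2_x$, whose intersection with a generic line $L_x \subset \PP^2_x$ has $\deg f$ points; translating this back to the graph gives $d_{(1,1)}(\EPV{5}) = \deg f$, so the remaining assertion $d_{(1,1)} = 5$ is equivalent to $\deg f = 5$.

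To pin down $\deg f = 5$, one geometric route is explicit: after a homography sending three shared points of a pair of conics $\omega_y^i, \omega_y^j$ to $e_1, e_2, e_3$, \Cref{lem:sturm} realizes $b(a)$ as $f_q(f_q(\omega_y^i) \cap f_q(\omega_y^j))$, with the conic coefficients depending linearly on the cross-ratios $\lambda_i$---themselves bidegree-$(2,2)$ rational functions of $a$. A naive degree count overestimates $\deg f$ because of base-point cancellations; the points $x_1,\ldots,x_5$ are visibly base points of $f$, since $a = x_i$ forces $b = y_i$ in any rank-$2$ fundamental matrix. Pinning down $\deg f = 5$ directly would require identifying the full base locus with multiplicities and applying Noether's equations $\sum m_i = 3(\deg f - 1)$ and $\sum m_i^2 = (\deg f)^2 - 1$.

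The cleaner route---and the main obstacle to anticipate---is computational and leverages \Cref{sec:epv}: by generic freeness, the multidegrees of $\EPV{\X,\Y}$ are constant on a dense Zariski-open subset of $(\PP^2 \times \PP^2)^5$, so it suffices to verify $(d_{(2,0)}, d_{(1,1)}, d_{(0,2)}) = (1,5,1)$ for a single generic instance---e.g., $\X$, $\Y$ in standard position with randomly chosen $x_5, y_5$. One can then compute a Gr\"obner basis for the ideal generated by the bidegree-$(2,2)$ equation of \Cref{thm:main thm n=4} together with the analogous bidegree-$(2,2)$ equation coming from $y_5^\top F x_5 = 0$ via~\eqref{eq:ab-to-F}, extract the multidegrees from the resulting bigraded Hilbert series, and transfer the conclusion back to generic $(\X, \Y)$. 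The bookkeeping challenge here is that these two equations do not cut out $\EPV{5}$ as a complete intersection: extraneous components supported on coordinate-type subvarieties of $\PP^2 \times \PP^2$ appear in the intersection and must be correctly removed in order to isolate the multidegrees of $\EPV{5}$ itself.
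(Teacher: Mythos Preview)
Your reduction is exactly the paper's: \Cref{lemma:dim-En} for the dimension, \Cref{thm:n=5 loci invariants} for birationality of $\pi_x$ and $\pi_y$, and then everything hinges on $\deg f = 5$, which you correctly identify with $d_{(1,1)}(\EPV{5})$. Where you diverge is that you stop short of computing $\deg f$ and defer to either a base-locus analysis or a Gr\"obner computation; the paper instead carries your ``geometric route'' through to an explicit formula, and it is simpler than you anticipate.

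The key simplification you are missing is to parametrize the two conics not via the cross-ratios $\lambda_i$ but via the fundamental-matrix formula~\eqref{eq:ab-to-F}. In standard position the two conics in $b$ are just $e_4^T F e_4 = 0$ and $y_5^T F x_5 = 0$, each of which is bilinear in the monomials $a_i a_j$ and $b_k b_\ell$ (equations~\eqref{eq:C},~\eqref{eq:C'}). Applying \Cref{lem:sturm} then produces $b$ as a triple of sextics in $a$; the ``base-point cancellation'' you worry about is a single common factor $a_1 a_2 a_3$ that drops out on sight, leaving the explicit quintic forms~\eqref{eq:b-from-a}. One then checks that the quadratic factors $p_1,p_2,p_3$ in~\eqref{eq:pi123} are irreducible and pairwise non-proportional, so no further cancellation occurs and $\deg f = 5$. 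No Noether equations or saturation of extraneous components are needed. A side benefit of this explicit formula, which your Gr\"obner route would not produce, is that~\eqref{eq:b-from-a} and~\eqref{eq:pi123} are reused downstream in \Cref{thm:cremona-6-base points} and the $n=6$ analysis.
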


\begin{proof}\Cref{lemma:dim-En} gives that  $\dim (\EPV{5})=2,$ ie.~$\EPV{5}$ is a surface.
Let us consider the projection
\begin{align}
\pi_x : \EPV{5} &\to \PP^2_x, \,\,\,\,\,\,
(a,b) \mapsto a . \label{eq:map-pi-a}
\end{align}
%Assuming that $\X,\Y$ are in standard position,
For part (1), consider the fiber $\pi_x^{-1} (a)$ over a generic point $a\in \PP^2_x.$
Using the intersecting conic construction, we know that $(a,b) \in \pi_x^{-1} (a)$ is uniquely determined, and $b$ must lie on the intersection of two conics: $\omega $ passing through $a$ and $x_1,x_2,x_3,x_4\in \X$ and $\omega '$ passing through $a$ and $x_1, x_2, x_3, x_5 \in \X .$
Using~\Cref{lem:sturm}, this gives immediately the existence of a rational inverse for $\pi_x$.
Reversing the roles of $a$ and $b$ shows that $\pi_y$ also has a rational inverse; it follows that $\pi_y \circ \pi_x^{-1}$ (and its rational inverse $\pi_x \circ \pi_y^{-1}$) are both Cremona transformations.

To determine the degree of the Cremona transformation $\pi_y \circ \pi_x^{-1}$, assume without loss of generality that
$\X $ and $\Y$ are in standard  position.
Using~\eqref{eq:ab-to-F}, we may take $\omega $ to be the conic whose equation in $b$ is given by
\begin{align}
e_4^T \diag (a) [a\odot  b ]_{\times } \diag (b) e_4 &= e_4^T \left(\!\begin{array}{ccc}
      0&-a_{1}a_{3}b_{2}b_{3}&a_{1}a_{2}b_{2}b_{3}\\
      a_{2}a_{3}b_{1}b_{3}&0&-a_{1}a_{2}b_{1}b_{3}\\
      -a_{2}a_{3}b_{1}b_{2}&a_{1}a_{3}b_{1}b_{2}&0
      \end{array}\!\right) e_4 \nonumber  \\
      &= a_3 (a_1 - a_2) b_1 b_2 + a_2 (a_3 - a_1) b_1 b_3 + a_1 (a_2 - a_3) b_2 b_3 = 0. \label{eq:C}
\end{align}
A similar calculation gives us the equation of $\omega '$:
\begin{align}
&y_5^T \diag (a) [a\odot  b ]_{\times } \diag (b) x_5 = \nonumber \\ 
&a_3y_{5,3} \left(a_{1}x_{5,2}-a_{2}x_{5,1}\right)b_{1}b_{2
      }+a_2y_{5,2}\left(a_{3}x_{5,1}-a_{1}x_{5,3}\right)b_{1}b_{3
      }+y_{5,1}a_1\left(a_{2}x_{5,3}-a_{3}x_{5,2}\right)b_{2}b_{
      3}
= 0. \label{eq:C'}
\end{align}
We now apply~\Cref{lem:sturm}.
The two lines $\ell = f_q (\omega )$ and $\ell ' = f_q (\omega ')$ intersect in the point $f_q (b)$, whose homogeneous coordinates may be obtained by taking the cross product of coefficient vectors obtained from the conic equations~\eqref{eq:C} and~\eqref{eq:C'}; thus
\begin{align}
f_q (b) =
\begin{bmatrix}
a_3 (a_1 - a_2) \\
a_2 (a_3 - a_1)\\
a_1 (a_2 - a_3)
\end{bmatrix}
\times 
\begin{bmatrix}
y_{5,3} a_3 (a_{1}x_{5,2}-a_{2}x_{5,1}) \\
y_{5,2} a_2 (a_{3}x_{5,1}-a_{1}x_{5,3})   \\ 
y_{5,1} a_1 (a_{2}x_{5,3}-a_{3}x_{5,2})\\
\end{bmatrix}
= 
\begin{bmatrix}
a_1  a_2\, p_1 (a; x_5, y_5) \\
a_1 a_3 \, p_2 (a; x_5, y_5) \\
a_2  a_3 \, p_3 (a; x_5, y_5)
\end{bmatrix}, \label{eq:fq-b}
\end{align}
where $p_1, p_2, p_3$ are homogeneous polynomials of degree $2$ in $a$.
Explicitly, we have
\begin{tiny}
\begin{align}
p_1 (a) &= \left(x_{5,2}y_{5,1}-x_{5,1}y_{5,2}\right)a_{3}^{2} +\left(x_{5,2}y_{5,3}-x_{5,1}y_{5,3}\right)a_{1}a_{2}+\left(x_{5,1}y_{5,2} - x_{5,2}y_{5,3}\right)a_{1}a_{3}+\left(x_{5,1}y_{5,2} - x_{5,2}y_{5,1}\right)a_{2}a_{3},\nonumber \\
p_2 (a) &= \left(x_{5,1}y_{5,3}- x_{5,3}y_{5,1}\right)a_{2}^{2} + \left(x_{5,3}y_{5,2}- x_{5,1}y_{5,3}\right)a_{1}a_{2}+\left(x_{5,1}y_{5,2}-x_{5,3}y_{5,2}\right)a_{1}a_{3}+\left(x_{5,3}y_{5,1}-x_{5,1}y_{5,2}\right)a_{2}a_{3},\nonumber \\
p_3 (a) &= \left(x_{5,3}y_{5,2}-x_{5,2}y_{5,3}\right)a_{1}^{2}+\left(x_{5,2}y_{5,3}-x_{5,3}y_{5,1}\right)a_{1}a_{2}+\left(x_{5,2}y_{5,1}-x_{5,3}y_{5,2}\right)a_{1}a_{3}+\left(x_{5,3}y_{5,1}-x_{5,2}y_{5,1}\right)a_{2}a_{3}. \label{eq:pi123}
\end{align}
\end{tiny}
Applying $f_q$ to~\eqref{eq:fq-b}, we obtain
\begin{align}
b = f_q ( f_q (b))  
= 
\begin{bmatrix}
a_1^2 a_2 a_3 \,  p_2 (a ; x_5, y_5) p_3 (a ; x_5, y_5) \\
a_1 a_2^2 a_3 \, p_1 (a ; x_5, y_5) p_3 (a ; x_5, y_5)\\
a_1 a_2 a_3^2 \, p_1 (a ; x_5, y_5) p_2 (a ; x_5, y_5)
\end{bmatrix} 
\sim 
\begin{bmatrix}
a_1 \, p_2 (a ; x_5, y_5) p_3 (a ; x_5, y_5) \\
a_2 \, p_1 (a ; x_5, y_5) p_3 (a ; x_5, y_5)\\
a_3 \, p_1 (a ; x_5, y_5) p_2 (a ; x_5, y_5)
\end{bmatrix}. \label{eq:b-from-a}
\end{align}
In summary,~\eqref{eq:b-from-a} provides an explicit formula for the Cremona transformation
$\pi_y \circ \pi_x^{-1} : \PP^2_x \dashrightarrow \PP^2_y$ that sends $a\mapsto b.$
The polynomials $p_1,p_2,p_3$ are homogeneous of degree $2$ in $a.$
Upon verifying that $p_1, \ldots , p_3$ are irreducible and not multiples of each other, we deduce from~\eqref{eq:b-from-a} that the Cremona transformation  $\pi_y \circ \pi_x^{-1}$ has degree $5.$ 

For part (2), note first that slicing $\PP_x $ by $2$ generic hyperplanes determines a unique point $a,$ from which $b$ is also uniquely determined by~\eqref{eq:b-from-a}.
Thus $d_{(2,0) } (\EPV{5}) = 1,$ and similar remarks give $d_{(0,2)} (\EPV{5}) = 1.$
To obtain $d_{(2,0) } (\EPV{5}) ,$ suppose we slice $\EPV{5}$ with generic line $\ell_x \subset \PP_x $ and a generic line $\ell_y \subset \PP_y$.
Consider a parametric description of $\ell_x$: for fixed, generic $a_1, a_2 \in \PP_x^2$, 
\begin{equation}
\ell_x = 
\left\{ 
s a_1 + t a_2 \mid 
[s:t] \in \PP^1 
\right\} .\label{eq:lx-parametric}
\end{equation}
Consider also an implicit description of the line $\ell_y,$
\begin{equation}
\ell_y = \left\{ b \in \PP^2 \mid 
\begin{bmatrix}
d_1 & d_2 & d_3 
\end{bmatrix}
b
= 0
\right\}.\label{eq:ly-implicit}
\end{equation}
Then, any point $(a,b) \in \EPV{5} \cap \ell_x \cap \ell_y $ must satisfy
\begin{equation}
 \begin{bmatrix}
d_1 & d_2 & d_3 
\end{bmatrix}
\begin{bmatrix}
\pi_y \circ \pi_x^{-1} (s a_1 + t a_2 ) \\
\pi_y \circ \pi_x^{-1} (s a_1 + t a_2 ) \\
\pi_y \circ \pi_x^{-1} (s a_1 + t a_2 ) 
\end{bmatrix} = 0.\label{eq:poly-deg-5}
\end{equation}
We conclude the proof of part (2) by noting that, in the affine chart $s=1,$~\eqref{eq:poly-deg-5}, defines a polynomial of degree $5$ in $t$ with $5$ distinct roots, and thus $d_{(1,1)} (\EPV{5}) = \# (\EPV{5} \cap \ell_x \cap \ell_y) = 5.$
\end{proof}

Werner also notes (\cite[Section 5]{werner}) that part (1) of~\Cref{thm:main thm n=5} is a classical result, citing a 1908 treatise of Rudolf Sturm~\cite{1908sturm}
Our proof provides a detailed, self-contained  justification, and the closely-connected multidegree computation $d_{(1,1)} \left(\EPV{5}\right).$ 
We may also check the conclusions of~\Cref{thm:main thm n=5} and obtain additional information about $\EPV{5}$ by computing the vanishing ideal $\mathcal{I} (\EPV{5})$ with the help of the computer algebra system Macaulay2~\cite{M2}.
The ideal $\mathcal{I} (\EPV{5})$ is generated by $11$ polynomials---3 polynomials of bidegree $(1,3),$ 3 of bidegree $(3,1),$ and 5 of bidegree $(2,2).$

We close by discussing some additional properties of Cremona transformations.
In particular, we will identify properties of the degree-$5$ transformation $\pi_y \circ \pi_x^{-1}$ that will be useful in~\Cref{sec:6-points}.

Much like the standard quadratic transformation, a general Cremona transformation
\begin{align}
  f:\PP^2 &\dashrightarrow \PP^2, \,\,\,\,\,\,
  [x_1:x_2:x_3] \mapsto [f_1 (x) : f_2 (x) : f_3(x) ]
  \label{eq:generic-cremona}
  \end{align}
of degree $n$ has a finite set of \emph{base points} where $f$ is undefined.
To determine the base points of $f$, we associate to $f$ a $2$-parameter family of curves: for $\lambda = [\lambda_1 : \lambda_2 : \lambda_3] \in \PP^2,$ consider the curve
\begin{align}
C_\lambda  = \{ x\in \PP^2 \mid \lambda_1 f_1(x) + \lambda_2 f_2(x) + \lambda_3 f_3(x) = 0 \}.\label{eq:cremona}
\end{align}
The family of curves $\left( C_\lambda \right)_{\lambda \in \PP^2}$ is known classically as the \emph{homoloidal net} associated to $f.$
Observe that each $C_\lambda $ passes through the base points of $f.$
For generic $\lambda $, we define the \emph{multiplicity} of a base point $p$ with respect to $f$ to be the multiplicity of $p$ as a point of the curve $C_\lambda $.
In other words, the multiplicity of a base point $p$ is the largest $k$ such that all $k$-fold partial derivatives of the defining equation of $C_\lambda $ vanish at $p.$
This definition is independent of the choice of generic $\lambda .$

The following result is known classically.
We refer to the nice text~\cite{beltrametti} for a proof in modern language which utilizes B\'{e}zout's theorem and the genus-degree formula.

\begin{proposition}\label{prop:cremona-base points}
(See eg.~\cite[eq.~9.12]{beltrametti})
  If $f: \PP^2 \dashrightarrow \PP^2$ is a degree-$n$ Cremona transformation whose base points $p_1, \ldots , p_k$ have multiplicities $s_1, \ldots , s_k,$ then
  \begin{equation}\label{eq:cremona-base point-count}
\displaystyle\sum_{i=1}^k s_i = 3 (n-1).
    \end{equation}
\end{proposition}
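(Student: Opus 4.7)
The plan is to combine two classical tools---B\'ezout's theorem and the genus-degree formula---applied to generic members of the homoloidal net of $f$.

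First, I would pick generic $\lambda, \lambda' \in \PP^2$ and analyze the intersection $C_\lambda \cap C_{\lambda'}$. The two target lines $\{\mu \cdot y = 0\}$ (for $\mu = \lambda, \lambda'$) meet in a single point $q$, and since $f$ is birational, the preimage $f^{-1}(q)$ consists of a single non-base point. So away from $\{p_1, \ldots, p_k\}$ the intersection $C_\lambda \cap C_{\lambda'}$ is exactly one point. At each base point $p_i$, both curves vanish to order $s_i$; for generic $\lambda, \lambda'$ their tangent cones at $p_i$ share no common line, which forces the local intersection multiplicity at $p_i$ to equal exactly $s_i^2$. B\'ezout's theorem then gives
\[
n^2 \;=\; 1 \;+\; \sum_{i=1}^k s_i^2.
\]

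Next, I would use that a generic $C_\lambda$ is birational to a target line via $f$, hence is a rational irreducible plane curve. Its arithmetic genus is $\binom{n-1}{2}$, and---for generic $\lambda$---each base point $p_i$ is an ordinary $s_i$-fold point, contributing $\binom{s_i}{2}$ to the genus drop. Equating the geometric genus $0$ with arithmetic genus minus the singularity correction yields
\[
\binom{n-1}{2} \;=\; \sum_{i=1}^k \binom{s_i}{2}, \quad \text{i.e.,} \quad \sum_{i=1}^k s_i^2 \;-\; \sum_{i=1}^k s_i \;=\; (n-1)(n-2).
\]
Subtracting this from the B\'ezout identity $\sum_i s_i^2 = n^2 - 1$ yields
\[
\sum_{i=1}^k s_i \;=\; (n^2 - 1) \;-\; (n-1)(n-2) \;=\; 3(n-1),
\]
as required.

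The main obstacle is justifying the genericity hypotheses---that the tangent cones of $C_\lambda$ and $C_{\lambda'}$ at each $p_i$ are transverse, and that each $p_i$ is an ordinary singularity of a generic $C_\lambda$. Equivalently, one must confirm that there are no infinitely near base points absorbing extra multiplicity. The cleanest resolution is to pass to a smooth model by resolving the indeterminacies of $f$ via a sequence of blow-ups; on this model one replaces B\'ezout by intersection theory of total transforms, the genus-degree formula by adjunction, and all base points---proper and infinitely near---appear on equal footing. This yields the same identity with the convention that infinitely near base points are counted with their respective multiplicities.
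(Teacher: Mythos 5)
Your argument is correct and is essentially the proof the paper points to: the paper does not prove the proposition itself but cites~\cite{beltrametti}, noting that the proof there ``utilizes B\'ezout's theorem and the genus-degree formula,'' which is exactly the combination you carry out (including the right way to handle the genericity/infinitely-near-point caveat by resolving the indeterminacy via blow-ups). Nothing further is needed.
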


We now determine the base points of the Cremona transformation $\pi_y \circ \pi_x^{-1}.$
The statement of the next result is also due to Werner~\cite{werner}; however, his justification is incomplete, as he asserts that the number of base points in~\Cref{prop:cremona-base points} is generally $n+1$ (which happens to agree with the correct answer when $n\in \{ 2, 5 \} .$) 

\begin{theorem}\label{thm:cremona-6-base points}
  The Cremona transformation $\pi_y \circ \pi_x^{-1}: \PP^2 \dashrightarrow \PP^2$ associated to generic $\X$ and $\Y$ has exactly six base points, each of multiplicity $2,$
  consisting of $x_1, \ldots , x_5,$ and a sixth \textbf{exceptional point} $\hat{x}$.
  When $\X $ and $\Y$ are in standard position, the exceptional point is given by
  \begin{equation}\label{eq:exceptional-point-x}
\hat{x} = \left[\frac{y_{53}-y_{52}}{y_{53}x_{52}-x_{53}y_{52}}
:
\frac{y_{53}-y_{51}}{y_{53}x_{51}-x_{53}y_{51}}
:
\frac{y_{51}-y_{52}}{y_{51}x_{52}-x_{51}y_{52}}\right].
    \end{equation}
\end{theorem}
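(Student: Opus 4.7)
The plan is to combine a direct substitution argument with the counting constraint in~\Cref{prop:cremona-base points}. For $n = 5$, the Noether identity reads $\sum_i s_i = 3(n-1) = 12$, so once I exhibit six base points each of multiplicity exactly $2$, this will saturate the count and rule out any further base points.

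First, with $\X$ and $\Y$ in standard position, I would verify directly that each of $x_1, \ldots, x_5$ is a base point of the Cremona $a \mapsto b$ defined by~\eqref{eq:b-from-a}. For $x_i = e_i$ with $i \in \{1,2,3\}$, the explicit expressions in~\eqref{eq:pi123} show that exactly two of $p_1, p_2, p_3$ vanish at $e_i$ (specifically, the two whose expansions lack an $a_i^2$ monomial), and combined with the vanishing of two of the coordinates $a_k$ at $e_i$, this forces every component of~\eqref{eq:b-from-a} to vanish. For $x_4 = e_4 = (1,1,1)$, the symmetry $a_1 = a_2 = a_3 = 1$ produces elementary cancellations yielding $p_j(e_4) = 0$ for all $j$. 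For $x_5$, the cleanest conceptual argument is that $F = \diag(a)[a \odot b]_\times \diag(b)$ from~\eqref{eq:ab-to-F} satisfies $Fx_5 = 0$ whenever $a = x_5$, since $(x_5 \odot b) \times (x_5 \odot b) = 0$; the defining equation $y_5^\top F x_5 = 0$ of $\omega'$ in~\eqref{eq:C'} then holds identically in $b$, forcing all its coefficients to vanish, and tracing back through the cross-product derivation of $f_q(b)$ then gives $p_j(x_5) = 0$ for $j = 1, 2, 3$.

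The sixth base point $\hat{x}$ will be located as the fourth intersection of the conics $V(p_1)$ and $V(p_2)$, which by the preceding paragraph share the three common zeros $e_1, e_4, x_5$. Using~\Cref{lem:sturm} together with a coordinate change sending those three common zeros to $e_1, e_2, e_3$ reduces the problem to intersecting two lines, and undoing the coordinate change produces the explicit formula~\eqref{eq:exceptional-point-x}. I would then check that $\hat{x}$ also lies on $V(p_3)$, either by direct substitution or, more conceptually, by repeating the Bezout argument for $V(p_1) \cap V(p_3)$ and $V(p_2) \cap V(p_3)$, both of which contain $e_4$ and $x_5$ and must therefore also contain $\hat{x}$ by symmetry of the construction. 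At each of the six base points, a local expansion tracking the vanishing orders of the factors $a_i$ and $p_j$ appearing in~\eqref{eq:b-from-a} shows the multiplicity equals exactly $2$, saturating the Noether identity.

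The main obstacle is extracting the precise algebraic form of $\hat{x}$ given in~\eqref{eq:exceptional-point-x}. Abstractly, Bezout combined with the Noether count guarantees the existence of a sixth base point once the configuration of the $V(p_i)$ is understood, but producing the stated rational expressions in $x_5, y_5$ requires careful symbolic bookkeeping. The cleanest route is via the quadratic Cremona reduction in~\Cref{lem:sturm}, which realizes $\hat{x}$ as the image under $f_q$ of the intersection of the two lines $f_q(V(p_1))$ and $f_q(V(p_2))$; the resulting cross product then simplifies to the claimed closed form.
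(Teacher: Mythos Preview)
Your approach is correct and shares the same backbone as the paper's proof---both invoke the Noether relation from~\Cref{prop:cremona-base points} to cap the total multiplicity at $12$, then exhibit six base points of multiplicity $2$ each. The difference lies in how $\hat{x}$ is handled. The paper treats~\eqref{eq:exceptional-point-x} as \emph{given} and simply verifies by direct substitution that the three coordinate functions of~\eqref{eq:b-from-a} vanish there, and that the homaloidal net $h(a;\lambda)$ together with its first partials vanish while some second partial does not. You instead \emph{derive} $\hat{x}$ as the fourth intersection point of the conics $V(p_1)$ and $V(p_2)$ (which already share $e_1,e_4,x_5$), reducing via a coordinate change and~\Cref{lem:sturm} to a line intersection. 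Your route is more explanatory and shows where the formula comes from; the paper's route is shorter but opaque. One small caution: your argument that $\hat{x}$ also lies on $V(p_3)$ ``by symmetry of the construction'' is not quite self-contained---the cleanest way to close this is to observe that any point off the coordinate axes where two of the $p_i$ vanish is already a base point, so the Noether count forces the fourth intersection points of all three conic pairs to coincide. Your multiplicity check via factor-by-factor vanishing orders is the same computation as the paper's partial-derivative check, just phrased differently; it goes through once you track that at each $e_i$ exactly two of the $p_j$ vanish to order one and the relevant linear factor $a_k$ supplies the remaining order.
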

The formula for $\hat{x}$ in~\eqref{eq:exceptional-point-x} appeared previously in~\cite{connelly2024geometry}.
\begin{proof}
By~\Cref{prop:cremona-base points}, the sum of multiplicities of all base points must equal $3 (5-1) =12.$ 
Thus, once we have shown that $x_1, \ldots , x_5, \hat{x}$ are base points of multiplicity $2$, we may conclude that all base points have been found.
Fixing generic $\X$ and $\Y$ in standard position, let us consider the equation of the homaloidal net associated to $\pi_y \circ \pi_x^{-1}$,
\begin{equation}\label{eq:homaloidal-net}
h(a ; \lambda ) = 
\lambda_1 a_1 p_2 (a) p_3 (a) + 
\lambda_2 a_2 p_1 (a) p_3 (a) +
\lambda_3 a_3 p_1 (a) p_2 (a) = 0,
\end{equation}
where $p_i$ are as in~\eqref{eq:pi123}
For generic values of $\lambda ,$ we may verify that $h$ and its first-order partials $\partial_{a_i} h$ vanish at each base point, but the second-order partials do not.
Verifying that the coordinate functions~\eqref{eq:b-from-a} vanish at $e_1, \ldots , e_4, x_5$, and $\hat{x}$ then gives the result.
\end{proof}

Finally, we point out that the degree 5 Cremona transformation $\pi_y \circ \pi_x^{-1}$ has the property that it maps $6$ particular conics $\omega_{y,1}, \ldots , \omega_{y,5}, \hat{\omega}_y \subset \PP^2$ down to six points $y_1, \ldots , y_5,\hat{y} \in \PP^2 $, much like how the quadratic Cremona transformation $f_q$ maps 3 lines onto its base points. 
Here $\hat{y}$ is another \emph{exceptional point} naturally paired with $\hat{x}.$
If $\X$ and $\Y$ are in standard position, then
\begin{equation}\label{eq:exceptional-point-y}
\hat{y} = \left[
\frac{x_{53}-x_{52}}{y_{53}x_{52}-x_{53}y_{52}}
:
\frac{x_{53}-x_{51}}{y_{53}x_{51}-x_{53}y_{51}}
:
\frac{x_{51}-x_{52}}{y_{51}x_{52}-x_{51}y_{52}}
\right],
\end{equation}
and the conics $\omega_{y,i} = \left( \pi_y \circ \pi_x^{-1} \right)^{-1} (e_i)$ for $i=1,\ldots , 3$ are defined by the polynomials $p_i$ in~\eqref{eq:pi123}.
For generic $\X$ and $\Y,$ not necessarily in standard position, each conic $\omega_{y,i}$ for $i=1,\ldots 5$ is uniquely determined by the requirement that it passes through the five points of $\left(\mathcal{X} \cup \{ \hat{x} \}\right) \setminus \{ x_i \},$ and $\hat{\omega}_y = \left( \pi_y \circ \pi_x^{-1} \right)^{-1} (\hat{y})$ is the unique conic through $\X .$
General algebraic formulas for the exceptional points $\hat{x}$ and $\hat{y}$ follow by composing the homographies~\eqref{eq:Hx} and~\eqref{eq:Hy} with~\eqref{eq:exceptional-point-x} and~\eqref{eq:exceptional-point-y}.
These exceptional points will play a key role in our geometric construction for $n=6$ points.

\section{Camera loci when $n=6$}\label{sec:6-points}

We now come to the case of $n=6$ where we will see some 
intricate invariant theory helping to answer Question~\ref{question:q2}. The first theorem of this section is the following. 

\begin{theorem} \label{thm:n=6 invariant theory}
Let $\X = \{x_1,\ldots, x_6\} \subset \PP^2_x$ and $\Y = \{y_1, \ldots, y_6\} \subset \PP^2_y$ be two sets of generic labeled points that can be imaged by flatland cameras to $\Pcal \simeq \Qcal$ in $\PP^1$. Then the camera centers $a$ and $b$ lie on cubic curves $C_x \subset \PP^2_x$  and $C_y \subset \PP^2_y$ passing through the points in $\X$ and $\Y$. Once $a$ is chosen, $b$ is determined. The equations of $C_x$ and $C_y$ can be computed explicitly from $\X$ and $\Y$.
\end{theorem}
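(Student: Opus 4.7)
The plan is to combine~\Cref{thm:pull back tool} with the classical dimension count for plane cubics through six points. Following~\Cref{thm:even odd generators}, fix $g = (m_1, \ldots, m_5)$ where each $m_i = \prod_{\{j,k\} \in M_i}[jk]$ ranges over the five non-crossing perfect matchings $M_1, \ldots, M_5$ on the hexagon. By~\Cref{def:g' vector}, the pullbacks $m'_i(\X, a) = \prod_{\{j,k\} \in M_i}[jka]$ are products of three linear forms in $a$, hence plane cubics. Since every index $1,\ldots,6$ appears in exactly one edge of each matching, the factor $[jka]$ vanishes at both $x_j$ and $x_k$, so every $m'_i(\X, a)$ vanishes at all six points of $\X$; analogously for $\Y$.

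Next I invoke the dimension count: the vector space of plane cubics through six generic points has dimension $\binom{3+2}{2} - 6 = 4$. Since $m'_1(\X, a), \ldots, m'_5(\X, a)$ are five members of this $4$-dimensional space, they satisfy a nontrivial linear relation $\sum_{i=1}^5 c_i(\X)\, m'_i(\X, a) \equiv 0$. A genericity argument (verifiable by computing the kernel of the $5 \times 10$ monomial-coefficient matrix at one specialization and extending by upper-semicontinuity) shows this relation is unique up to scale, so $c(\X) = [c_1(\X):\cdots:c_5(\X)] \in \PP^4$ is well-defined. Analogously one obtains $c(\Y) \in \PP^4$.

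Now apply~\Cref{thm:pull back tool}: a pair $(a,b)$ is consistent iff $g'(\X, a) \sim g'(\Y, b)$ in $\PP^4$. Writing $g'(\Y, b) = \lambda\, g'(\X, a)$ and combining with the $\X$-side relation gives
\begin{equation*}
\sum_{i=1}^5 c_i(\X)\, m'_i(\Y, b) \;=\; \lambda \sum_{i=1}^5 c_i(\X)\, m'_i(\X, a) \;=\; 0,
\end{equation*}
so $b$ lies on the plane cubic $C_y := \{\sum_i c_i(\X)\, m'_i(\Y, b) = 0\}$. The symmetric argument forces $a \in C_x := \{\sum_i c_i(\Y)\, m'_i(\X, a) = 0\}$. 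Both cubics are explicit in $\X$ and $\Y$ and pass through $\X$, $\Y$ respectively, since each pullback $m'_i$ does. For the cubic to be nondegenerate (nonzero of full degree 3), one must check that $c(\Y) \not\sim c(\X)$, which holds for generic data.

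Finally, to see that $a$ determines $b$ uniquely, I appeal to~\Cref{thm:n=5 loci invariants}: discarding the sixth pair produces a degree-five Cremona map $\phi_5: \PP^2_x \dashrightarrow \PP^2_y$ recovering $b$ from $a$. Membership $a \in C_x$ is precisely the extra compatibility imposed by the sixth pair, so for $a \in C_x$ the image $b = \phi_5(a)$ automatically lies on $C_y$, and by~\Cref{lemma:dim-En} this exhausts $\EPV{6}$. The main obstacle is the genericity-of-unique-relation claim in the second paragraph: absent uniqueness, the image of $\phi_\X:a \mapsto [m'_1(\X,a):\cdots:m'_5(\X,a)]$ would lie in a smaller linear subspace of $\PP^4$ and $C_x$ could be cut out by several cubics simultaneously. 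Confirming uniqueness in a single generic example (e.g.\ via Macaulay2~\cite{M2}) and invoking semicontinuity is the cleanest route.
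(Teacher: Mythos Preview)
Your approach is correct and takes a genuinely more elementary route than the paper's. The paper passes to the classical Joubert invariants $A,\ldots,F$ (a fixed linear change from $m_0,\ldots,m_5$) and invokes Coble's Cremona hexahedral equations, which present the image surface $S_x=\overline{\phi_\X(\PP^2)}\subset\PP^5$ in the normalized form $\sum z_i^3=\sum z_i=\sum\bar a_x\,z_i=0$; the point is that the first two equations are \emph{universal}, so $S_x$ and $S_y$ differ only in one hyperplane, and for $a\in C_x$ the point $g'(\X,a)$ already satisfies all three defining equations of $S_y$, whence existence and uniqueness of $b$ follow from birationality of $\phi_\Y:\PP^2_y\dashrightarrow S_y$. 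You replace this machinery with the bare dimension count ``five cubics through six generic points must satisfy a relation,'' which yields the same coefficient vector $c(\X)$ up to scale (indeed your $c(\X)$ encodes exactly the paper's scalars $\bar a_x,\ldots,\bar f_x$ after the change of basis). The cost is that you lose the shared cubic relation $\sum z_i^3=0$, so sufficiency---that every $a\in C_x$ actually admits a compatible $b$---no longer comes for free. Your appeal to the $n=5$ Cremona map (from \Cref{thm:main thm n=5}, not \Cref{thm:n=5 loci invariants}) correctly gives uniqueness of $b$ given $a$, but the sentence ``membership $a\in C_x$ is precisely the extra compatibility'' is an assertion; to close it you should note that $C_x$ is irreducible for generic data (a smooth plane cubic through six generic points) and combine $\pi_x(\EPV{6})\subset C_x$ with $\dim\EPV{6}=1$ from \Cref{lemma:dim-En} to force equality. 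With that patch and the uniqueness-of-relation check you already flag, the argument is complete and arguably cleaner than the paper's.
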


We will now prepare  to prove Theorem~\ref{thm:n=6 invariant theory}. 
In this case it will be convenient to work with a particular set of non-minimal generators of the ring $R$ (c.f.~\Cref{thm:even odd generators}). We first enlarge the set non-crossing generators from~\Cref{ex:min gens} to include the degree $\ones$ bracket monomial 
\begin{align}
    m_0 = [14][25][36].
\end{align}
The graphs corresponding to the $6$ bracket monomials $m_0, \ldots, m_5$ are shown in Figure~\ref{fig:m0-to-m6}

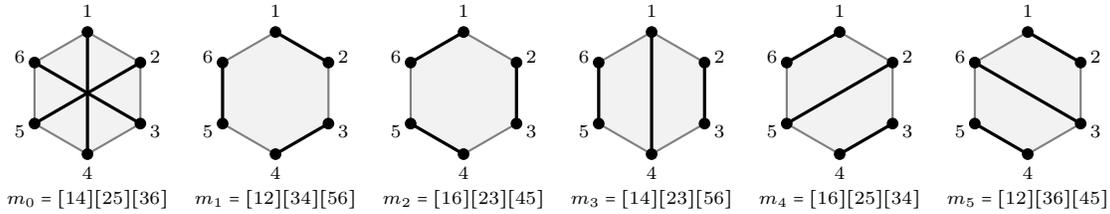
\begin{figure}[htbp]
    \centering
    \definecolor{yqyqyq}{rgb}{0.5019607843137255,0.5019607843137255,0.5019607843137255}
\begin{tikzpicture}[line cap=round,line join=round,>=triangle 45,x=1.0cm,y=1.0cm]
%\clip(-1.4033084238995415,-1.0080813360776208) rectangle (15.396547412689795,2.4611049552257156);
\draw[line width=0.8pt,color=yqyqyq,fill=yqyqyq,fill opacity=0.1] (-0.7035793656460445,0.40621173615201) -- (0.,0.) -- (0.7035793656460446,0.4062117361520099) -- (0.7035793656460447,1.21863520845603) -- (0.,1.62484694460804) -- (-0.7035793656460442,1.2186352084560306) -- cycle;
\draw[line width=0.8pt,color=yqyqyq,fill=yqyqyq,fill opacity=0.1] (1.7964206343539555,0.40621173615201) -- (2.5,0.) -- (3.2035793656460445,0.40621173615201) -- (3.2035793656460445,1.21863520845603) -- (2.5,1.62484694460804) -- (1.7964206343539557,1.2186352084560306) -- cycle;
\draw[line width=0.8pt,color=yqyqyq,fill=yqyqyq,fill opacity=0.1] (4.2964206343539555,0.40621173615201) -- (5.,0.) -- (5.7035793656460445,0.40621173615201) -- (5.7035793656460445,1.21863520845603) -- (5.,1.62484694460804) -- (4.2964206343539555,1.2186352084560306) -- cycle;
\draw[line width=0.8pt,color=yqyqyq,fill=yqyqyq,fill opacity=0.1] (6.7964206343539555,0.40621173615201) -- (7.5,0.) -- (8.203579365646045,0.40621173615201) -- (8.203579365646045,1.21863520845603) -- (7.5,1.62484694460804) -- (6.7964206343539555,1.2186352084560306) -- cycle;
\draw[line width=0.8pt,color=yqyqyq,fill=yqyqyq,fill opacity=0.1] (9.296420634353955,0.40621173615201) -- (10.,0.) -- (10.703579365646048,0.4062117361520082) -- (10.70357936564605,1.21863520845603) -- (10.,1.6248469446080418) -- (9.296420634353957,1.2186352084560323) -- cycle;
\draw[line width=0.8pt,color=yqyqyq,fill=yqyqyq,fill opacity=0.1] (11.796420634353957,0.40621173615201) -- (12.5,0.) -- (13.203579365646045,0.4062117361520077) -- (13.203579365646046,1.2186352084560268) -- (12.5,1.6248469446080376) -- (11.796420634353959,1.2186352084560297) -- cycle;

\draw [line width=1.2pt] (0.,1.62484694460804)-- (0.,0.);
\draw [line width=1.2pt] (0.7035793656460447,1.21863520845603)-- (-0.7035793656460445,0.40621173615201);
\draw [line width=1.2pt] (0.7035793656460446,0.4062117361520099)-- (-0.7035793656460442,1.2186352084560306);
\draw [line width=1.2pt] (2.5,1.62484694460804)-- (3.2035793656460445,1.21863520845603);
\draw [line width=1.2pt] (3.2035793656460445,0.40621173615201)-- (2.5,0.);
\draw [line width=1.2pt] (1.7964206343539555,0.40621173615201)-- (1.7964206343539557,1.2186352084560306);
\draw [line width=1.2pt] (5.,1.62484694460804)-- (4.2964206343539555,1.2186352084560306);
\draw [line width=1.2pt] (5.7035793656460445,1.21863520845603)-- (5.7035793656460445,0.40621173615201);
\draw [line width=1.2pt] (5.,0.)-- (4.2964206343539555,0.40621173615201);
\draw [line width=1.2pt] (7.5,1.62484694460804)-- (7.5,0.);
\draw [line width=1.2pt] (8.203579365646045,1.21863520845603)-- (8.203579365646045,0.40621173615201);
\draw [line width=1.2pt] (6.7964206343539555,1.2186352084560306)-- (6.7964206343539555,0.40621173615201);
\draw [line width=1.2pt] (10.,1.6248469446080418)-- (9.296420634353957,1.2186352084560323);
\draw [line width=1.2pt] (10.70357936564605,1.21863520845603)-- (9.296420634353955,0.40621173615201);
\draw [line width=1.2pt] (10.703579365646048,0.4062117361520082)-- (10.,0.);
\draw [line width=1.2pt] (12.5,1.6248469446080376)-- (13.203579365646046,1.2186352084560268);
\draw [line width=1.2pt] (11.796420634353959,1.2186352084560297)-- (13.203579365646045,0.4062117361520077);
\draw [line width=1.2pt] (11.796420634353957,0.40621173615201)-- (12.5,0.);

\begin{scriptsize}
    \foreach \n in {0,...,5} {    
    \draw[color=black] (0+2.5*\n, 1.9) node {1};
    \draw[color=black] (0.9+2.5*\n, 1.3) node {2};
    \draw[color=black] (0.9+2.5*\n, 0.3) node {3};
    \draw[color=black] (0+2.5*\n, -0.25) node {4};
    \draw[color=black] (-0.9+2.5*\n, 0.3) node {5};
    \draw[color=black] (-0.9+2.5*\n, 1.3) node {6};
   \draw [fill=black] (0+2.5*\n,0.) circle (2.0pt);
      \draw [fill=black] (-0.7035793656460445+2.5*\n,0.40621173615201) circle (2.0pt);
    \draw [fill=black] (0.7035793656460446+2.5*\n,0.4062117361520099) circle (2.0pt);
    \draw [fill=black] (0.7035793656460447+2.5*\n,1.21863520845603) circle (2.0pt);
    \draw [fill=black] (0.+2.5*\n,1.62484694460804) circle (2.0pt);
    \draw [fill=black] (-0.7035793656460442+2.5*\n,1.2186352084560306) circle (2.0pt);
    }
    \draw (0, -0.6) node {$m_0 = [14][25][36]$};
    \draw (2.5, -0.6) node {$m_1=[12][34][56]$};
    \draw (5, -0.6) node {$m_2=[16][23][45]$};
    \draw (7.5, -0.6) node {$m_3=[14][23][56]$};
    \draw (10, -0.6) node {$m_4=[16][25][34]$};
    \draw (12.5, -0.6) node {$m_5=[12][36][45]$};

\end{scriptsize}
\end{tikzpicture}
    \caption{The generators $m_1$ through $m_5$, with $m_0$.}
    \label{fig:m0-to-m6}
\end{figure}

The monomial $m_0$ is a perfect matching with crossing edges and hence is linearly dependent on $m_1, \ldots, m_5$. In fact, $m_0 =m_1+m_2+m_3+m_4+m_5$ 
which can be seen by successively uncrossing edges 
via the Pl\"ucker relations 
in Figure~\ref{fig:plucker-relations}.

Applying an invertible linear change of coordinates to $m_0, \ldots, m_6$ yields the {\em Joubert invariants} $A,B,C,D,E,F$ of $6$ labeled points in $\PP^1$, defined as: 
\begin{align}
    \begin{bmatrix}
        A \\ 
        B \\ 
        C \\ 
        D \\ 
        E \\ 
        F 
    \end{bmatrix} = 
    2 \begin{bmatrix} 
    -1 & -1 & -1 & 0 & 0 & 0 \\
    0 & 0 & 0 & -1 & -1 & 1 \\ 
    0 & 0 & 0 & 1 & -1 & -1 \\
    1 & 1 & -1 & 0 & 0 & 0 \\
    1 & -1 & 1& 0 & 0 & 0 \\ 
    0 & 0 & 0 & -1 & 1 & -1 \end{bmatrix}
    \begin{bmatrix}
        m_0 \\ m_1 \\ m_2 \\ m_3 \\ m_4 \\ m_5
    \end{bmatrix}.
\end{align}

Setting $g = (A,B,C,D,E,F)$, by Lemma~\ref{lem:signatures of orbits},  we have that since $\Pcal \simeq \Qcal$,
$g(\Pcal) \sim g(\Qcal)$. 
In order to pull back the Joubert invariants to $\PP^2$ as in Lemma~\ref{lem:pullback}, we need explicit bracket 
expressions. The classical expressions involve another change of coordinates and is the following:
\begin{small}
\begin{align} \label{eq:Joubert invariants}
\begin{split}
	A&= [12][34][56] + [13][46][25] + [14][26][35] + [15][24][36] + [16][23][45] \\
	B &= [45][31][26] + [43][16][52] + [41][56][32] + [42][51][36] + [46][53][26] \\
	C &=[15][64][23] + [16][43][52] + [14][53][62] + [12][54][63] + [13][56][42] \\
	D& =[14][36][52] + [13][62][45] + [16][42][35] + [15][46][32] + [12][43][65] \\
	E &=[16][32][54] + [13][24][65] + [12][64][35] + [15][62][34] + [14][63][25] \\
	F &=[42][61][53] + [46][13][25] + [41][23][65] + [45][21][63] + [43][26][15] 
% A&=[25][13][46]+[51][42][36]+[14][35][26]+[43][21][56]+[32][54][16]\\
% B&=[53][12][46]+[14][23][56]+[25][34][16]+[31][45][26]+[42][51][36]\\
% C&=[53][41][26]+[34][25][16]+[42][13][56]+[21][54][36]+[15][32][46]\\
% D&=[45][31][26]+[53][24][16]+[41][25][36]+[32][15][46]+[21][43][56]\\
% E&=[31][24][56]+[12][53][46]+[25][41][36]+[54][32][16]+[43][15][26]\\
% F&=[42][35][16]+[23][14][56]+[31][52][46]+[15][43][26]+[54][21][36].
\end{split}
\end{align}
\end{small}
Note that these expressions are not written using $m_0,\ldots,m_6$. 
For example, $A$
is the sum of the following crossing and non-crossing perfect matchings.

\begin{center}
\definecolor{GRAY}{rgb}{0.5,0.5,0.5}

\begin{tikzpicture}[scale = 1.1,
	line cap=round,line join=round,>=triangle 45,x=1.0cm,y=1.0cm]
%\clip(-1.1344805583333513,-0.8541939681769106) rectangle (9.043330547354122,2.01798138002722);
\fill[ color=GRAY,fill=GRAY,fill opacity=0.10000000149011612] (-0.5628634925168357,0.324969388921608) -- (0.,0.) -- (0.5628634925168357,0.32496938892160787) -- (0.5628634925168357,0.9749081667648238) -- (0.,1.299877555686432) -- (-0.5628634925168354,0.9749081667648244) -- cycle;
\fill[ color=GRAY,fill=GRAY,fill opacity=0.10000000149011612] (1.4371365074831646,0.324969388921608) -- (2.,0.) -- (2.5628634925168354,0.32496938892160787) -- (2.5628634925168354,0.9749081667648236) -- (2.,1.2998775556864315) -- (1.4371365074831648,0.9749081667648241) -- cycle;
\fill[ color=GRAY,fill=GRAY,fill opacity=0.10000000149011612] (3.4371365074831646,0.324969388921608) -- (4.,0.) -- (4.562863492516835,0.32496938892160787) -- (4.562863492516835,0.9749081667648236) -- (4.,1.2998775556864315) -- (3.4371365074831646,0.9749081667648241) -- cycle;
\fill[ color=GRAY,fill=GRAY,fill opacity=0.10000000149011612] (5.437136507483165,0.324969388921608) -- (6.,0.) -- (6.562863492516835,0.32496938892160787) -- (6.562863492516835,0.9749081667648236) -- (6.,1.2998775556864315) -- (5.437136507483165,0.9749081667648241) -- cycle;
\fill[ color=GRAY,fill=GRAY,fill opacity=0.10000000149011612] (7.437136507483165,0.324969388921608) -- (8.,0.) -- (8.562863492516835,0.324969388921608) -- (8.562863492516835,0.9749081667648237) -- (8.,1.2998775556864317) -- (7.437136507483165,0.9749081667648241) -- cycle;
\draw [line width=1.0pt] (0.,1.299877555686432) -- (0.5628634925168357,0.9749081667648238);
\draw [line width=1.0pt] (0.5628634925168357,0.32496938892160787) -- (0.,0.);
\draw [line width=1.0pt] (-0.5628634925168357,0.324969388921608) -- (-0.5628634925168354,0.9749081667648244);
\draw [line width=1.0pt] (2.,1.2998775556864315) -- (2.5628634925168354,0.32496938892160787);
\draw [line width=1.0pt] (2.,0.) -- (1.4371365074831648,0.9749081667648241);
\draw [line width=1.0pt] (2.5628634925168354,0.9749081667648236) -- (1.4371365074831646,0.324969388921608);
\draw [line width=1.0pt] (4.,1.2998775556864315) -- (4.,0.);
\draw [line width=1.0pt] (4.562863492516835,0.9749081667648236) -- (3.4371365074831646,0.9749081667648241);
\draw [line width=1.0pt] (4.562863492516835,0.32496938892160787) -- (3.4371365074831646,0.324969388921608);
\draw [line width=1.0pt] (6.,1.2998775556864315) -- (5.437136507483165,0.324969388921608);
\draw [line width=1.0pt] (6.562863492516835,0.9749081667648236) -- (6.,0.);
\draw [line width=1.0pt] (6.562863492516835,0.32496938892160787) -- (5.437136507483165,0.9749081667648241);
\draw [line width=1.0pt] (8.,1.2998775556864317) -- (7.437136507483165,0.9749081667648241);
\draw [line width=1.0pt] (8.562863492516835,0.9749081667648237) -- (8.562863492516835,0.324969388921608);
\draw [line width=1.0pt] (8.,0.) -- (7.437136507483165,0.324969388921608);
\begin{scriptsize}
\draw [fill=black] (0.,0.) circle (2.0pt);
\draw [fill=black] (8.,0.) circle (2.0pt);
\draw [fill=black] (4.,0.) circle (2.0pt);
\draw [fill=black] (6.,0.) circle (2.0pt);
\draw [fill=black] (2.,0.) circle (2.0pt);
\draw [fill=black] (-0.5628634925168357,0.324969388921608) circle (2.0pt);
\draw [fill=black] (7.437136507483165,0.324969388921608) circle (2.0pt);
\draw [fill=black] (3.4371365074831646,0.324969388921608) circle (2.0pt);
\draw [fill=black] (1.4371365074831646,0.324969388921608) circle (2.0pt);
\draw [fill=black] (5.437136507483165,0.324969388921608) circle (2.0pt);
\draw [fill=black] (0.5628634925168357,0.32496938892160787) circle (2.0pt);
\draw [fill=black] (0.5628634925168357,0.9749081667648238) circle (2.0pt);
\draw [fill=black] (0.,1.299877555686432) circle (2.0pt);
\draw [fill=black] (-0.5628634925168354,0.9749081667648244) circle (2.0pt);
\draw [fill=black] (2.5628634925168354,0.32496938892160787) circle (2.0pt);
\draw [fill=black] (2.5628634925168354,0.9749081667648236) circle (2.0pt);
\draw [fill=black] (2.,1.2998775556864315) circle (2.0pt);
\draw [fill=black] (1.4371365074831648,0.9749081667648241) circle (2.0pt);
\draw [fill=black] (4.562863492516835,0.32496938892160787) circle (2.0pt);
\draw [fill=black] (4.562863492516835,0.9749081667648236) circle (2.0pt);
\draw [fill=black] (4.,1.2998775556864315) circle (2.0pt);
\draw [fill=black] (3.4371365074831646,0.9749081667648241) circle (2.0pt);
\draw [fill=black] (6.562863492516835,0.32496938892160787) circle (2.0pt);
\draw [fill=black] (6.562863492516835,0.9749081667648236) circle (2.0pt);
\draw [fill=black] (6.,1.2998775556864315) circle (2.0pt);
\draw [fill=black] (5.437136507483165,0.9749081667648241) circle (2.0pt);
\draw [fill=black] (8.562863492516835,0.324969388921608) circle (2.0pt);
\draw [fill=black] (8.562863492516835,0.9749081667648237) circle (2.0pt);
\draw [fill=black] (8.,1.2998775556864317) circle (2.0pt);
\draw [fill=black] (7.437136507483165,0.9749081667648241) circle (2.0pt);

	\draw[color=black] (0, 1.55) node {$1$};
	\draw[color=black] (2, 1.55) node {$1$};
	\draw[color=black] (4, 1.55) node {$1$};
	\draw[color=black] (6, 1.55) node {$1$};
	\draw[color=black] (8, 1.55) node {$1$};

	\draw[color=black] (0.75, 0.9) node {$2$};
	\draw[color=black] (2.8, 0.2) node {$3$};
	\draw[color=black] (4, -0.25) node {$4$};
	\draw[color=black] (5.25, 0.2) node {$5$};
	\draw[color=black] (7.2, 1.0) node {$6$};

\end{scriptsize}
\end{tikzpicture}
\end{center}

Pulling $A, \ldots, F$  back to $\PP^2$ we obtain the cubic polynomials (in $u=(u_0,u_1,u_3)$):
\begin{small}
\begin{align} \label{eq:covariant cubics}
\begin{split}
a(u)&=[25u][13u][46u]+[51u][42u][36u]+[14u][35u][26u]+[43u][21u][56u]+[32u][54u][16u]\\
b(u)&=[53u][12u][46u]+[14u][23u][56u]+[25u][34u][16u]+[31u][45u][26u]+[42u][51u][36u]\\
c(u)&=[53u][41u][26u]+[34u][25u][16u]+[42u][13u][56u]+[21u][54u][36u]+[15u][32u][46u]\\
d(u)&=[45u][31u][26u]+[53u][24u][16u]+[41u][25u][36u]+[32u][15u][46u]+[21u][43u][56u]\\
e(u)&=[31u][24u][56u]+[12u][53u][46u]+[25u][41u][36u]+[54u][32u][16u]+[43u][15u][26u]\\
f(u)&=[42u][35u][16u]+[23u][14u][56u]+[31u][52u][46u]+[15u][43u][26u]+[54u][21u][36u]
\end{split}
\end{align}
\end{small}

By Theorem~\ref{thm:pull back tool}, $g'(\X,a) \sim g'(\Y,b)$ which we write as 
\begin{align} \label{eq:equality of bumped joubert}
(a_x(a),\ldots,f_x(a))\sim(a_y(b),\ldots,f_y(b))
\end{align}
where $a_x(a)$ is the evaluation of $a(u)$ at $\X,a$.

Now suppose we focus on $\X$.
It is known that the closure of the parameterized variety 
\begin{align} \label{eq:Sx} 
\left\{ [a_x(u) : b_x(u) : c_x(u) : d_x(u) : e_x(u) : f_x(u))] \in \PP^5 \,:\, u \in \PP^2 \right\}
\end{align}
is a cubic surface in $\PP^5$ cut out by the {\em Cremona hexahedral equations} \cite{coble}: 
\begin{align}\label{eq:canonical cubic surface}
\begin{split}
    z_1^3+z_2^3+z_3^3+z_4^3+z_5^3+z_6^3&=0\\
    z_1 + z_2 + z_3 + z_4 + z_5 + z_6 & = 0\\
    \bar{a}_x z_1+\bar{b}_x z_2 +\bar{c}_x z_3+\bar{d}z_4 +\bar{e}_x z_5 +\bar{f}_x z_6 &=0. %\label{eq:bar equation}
    \end{split}
\end{align}
The scalars $\bar{a}_x,\ldots,\bar{f}_x$ are defined as follows. For the $6$ points $x_1, \ldots, x_6$ in $\PP^2_x$ define 
\begin{equation}[(ij)(kl)(rs)]:=[ijr][kls]-[ijs][klr].
\end{equation}
The vanishing of this invariant expresses that the three lines 
$\overline{x_ix_j}$, $\overline{x_kx_l}$ and $\overline{x_rx_s}$ 
meet in a point \cite[pp 169]{coble}. Using these invariants, Coble 
defines the  $6$ scalars \cite[pp 170]{coble}:
\begin{small}
\begin{align}\label{eq:bumped up Joubert invariants}
\begin{split}
\bar{a}_x&=[(25)(13)(46)]+[(51)(42)(36)]+[(14)(35)(26)]+[(43)(21)(56)]+[(32)(54)(16)]\\
\bar{b}_x&=[(53)(12)(46)]+[(14)(23)(56)]+[(25)(34)(16)]+[(31)(45)(26)]+[(42)(51)(36)]\\
     \bar{c}_x&=[(53)(41)(26)]+[(34)(25)(16)]+[(42)(13)(56)]+[(21)(54)(36)]+[(15)(32)(46)]\\
\bar{d}_x&=[(45)(31)(26)]+[(53)(24)(16)]+[(41)(25)(36)]+[(32)(15)(46)]+[(21)(43)(56)]\\
\bar{e}_x&=[(31)(24)(56)]+[(12)(53)(46)]+[(25)(41)(36)]+[(54)(32)(16)]+[(43)(15)(26)]\\
\bar{f}_x&=[(42)(35)(16)]+[(23)(14)(56)]+[(31)(52)(46)]+[(15)(43)(26)]+[(54)(21)(36)]
\end{split}
\end{align}
\end{small}

We now have all the ingredients to prove 
Theorem~\ref{thm:n=6 invariant theory}.

\begin{proof}[Proof of Theorem~\ref{thm:n=6 invariant theory}]
By \eqref{eq:equality of bumped joubert}, the vector 
$(a_y(b),\ldots,f_y(b))$ satisfies the equations in 
\eqref{eq:canonical cubic surface} where the scalars where computed from $\X$. Therefore, 
\begin{align} \label{eq:mixed eqn 1}
    \bar{a}_x a_y(b)+\bar{b}_x b_y(b) +\bar{c}_x c_y(b)+\bar{d}_x d_y(b) +\bar{e}_x e_y(b) +\bar{f}_x f_y(b) =0.
\end{align}
Switching the roles of $\X$ and $\Y$ we also have that 
\begin{align} \label{eq:mixed eqn 2}
    \bar{a}_y a_x(a)+\bar{b}_y b_x(a) +\bar{c}_y c_x(a)+\bar{d}_y d_x(a) +\bar{e}_y e_x(a) +\bar{f}_y f_x(a) =0.
\end{align}

Therefore, $a$ lies on the cubic curve $C_x$ in $\PP^2$ cut out by equation~\ref{eq:mixed eqn 2} and $b$ lies on the cubic curve $C_y$ in $\PP^2$ cut out by equation~\ref{eq:mixed eqn 1}. Check that $x_1, \ldots, x_6$ lie on $C_x$ and $y_1, \ldots, y_6$ lie on $C_y$. 

Next we argue that if we fix $a \in C_x$ then $b$ is uniquely determined. Denote the cubic surface \eqref{eq:Sx} in $\PP^5$ by $S_x$, and define $S_y$ similarly. The maps $\PP^2_x \dashrightarrow S_x$ and $\PP^2_y \dashrightarrow S_y$ are birational automorphisms and are one-to-one except at $x_i$ and $y_i$ respectively. Fix $a\in C_x$. Then the vector
\begin{equation}
g'(\mathcal{X},a)\sim(a_x(a),\ldots,f_x(a))
\end{equation}
represents the unique point in $S_x$ corresponding to $a$. Since it satisfies the equation $\bar{a}_yz_1+\ldots+\bar{f}_yz_6=0$, it is also a point in $S_y$; there is therefore a unique point $b\in C_y$ satisfying $g'(\mathcal{Y},b)\sim g'(\mathcal{X}, a)$.
\end{proof}

\begin{remark}
Note that if we pick $a \in C_x$, then we could also do the intersecting conic construction to locate $b$. This would work by constructing all the $15={6 \choose 4}$ conics in $\PP^2_x$ that pass through $a \in C_x$ and $\X \setminus \{x_i,x_j\}$, and then constructing the conics in $\PP^2_y$ passing through $\Y \setminus \{y_i,y_j\}$ with the corresponding conic cross-ratios. These $15$ conics in $\PP^2_y$ will intersect at a unique point which is $b$. However, this construction requires knowing the cubic $C_x$ on which to locate $a$. 
\end{remark}

\definecolor{BLUE}{rgb}{0.,0.,1.}
\definecolor{GRAY}{rgb}{0.5019607843137255,0.5019607843137255,0.5019607843137255}
\definecolor{ORANGE}{rgb}{1.,0.4980392156862745,0.}
\definecolor{YELLOW}{rgb}{1.,0.8,0.}
\definecolor{GREEN}{rgb}{0.,0.8,0.5}
\definecolor{RED}{rgb}{1.,0.,0.}
\definecolor{PINK}{rgb}{1.,0.,1.}

\def\RX{\textcolor{RED}{x_5}}
\def\BX{\textcolor{BLUE}{x_2}}
\def\YX{\textcolor{YELLOW}{x_4}}
\def\GX{\textcolor{GREEN}{x_3}}
\def\OX{\textcolor{ORANGE}{x_6}}
\def\PX{\textcolor{PINK}{x_1}}

\def\RZ{\textcolor{RED}{y_5}}
\def\YZ{\textcolor{YELLOW}{y_4}}
\def\BZ{\textcolor{BLUE}{y_2}}
\def\GZ{\textcolor{GREEN}{y_3}}
\def\OZ{\textcolor{ORANGE}{y_6}}
\def\PZ{\textcolor{PINK}{y_1}}
\def\PE{\textcolor{PINK}{\hat{y}_1}}

To geometrically construct the locus of flatland camera centers when $n=6,$ we may apply a variant of the  intersecting conic construction described in the previous section to 5-point subsets.
The exceptional points $\hat{x}, \hat{y}$ arising from this construction will play a key role.

Fix an ordered pair of indices $(i,j)$ with $1\le i , j \le 6,$ and $i\ne j.$
Similarly to the previous section, let $\hat{y}_i$ denote the exceptional point determined by the set $\Y \setminus \{ y_i \}.$
We define $\omega_{y,ij}$ to be the unique conic passing through the five-point set $\left(\Y \cup \{ \hat{y}_i \}\right) \setminus \{ y_i , y_j\}.$
For each $i=1,\ldots , 6,$ we then have
\begin{equation}
\label{eq:conic-6-intersection}
\displaystyle\bigcap_{\substack{1\le j \le 6\\j\ne i}} \omega_{y,ij}
=
\{ 
\hat{y}_i
\}.
\end{equation}
\Cref{fig:many-conics} below illustrates the intersection formula~\eqref{eq:conic-6-intersection} when $i=1.$
As Werner observes~\cite{werner},   the conics $\omega_{y,ij}$ may be defined independently of the exceptional points $\hat{y}_i$.
Indeed, the conic $\omega_{x,i}$ passing through $\X \setminus \{ x_i \}$ maps to $\omega_{y,ij}$
under the homography that sends $x_k \to y_k$ for $k\notin \{ i, j\}$.
\Cref{fig:many-conics} also illustrates that any two of the conics are sufficient to determine this point, since they will intersect in three points of $\Y.$ 
This furnishes a construction of six exceptional points $\hat{y}_1, \ldots , \hat{y}_6\in \PP_y^2$ from $\X$ and $\Y $ alone.
A similar procedure constructs $6$ exceptional points $\hat{x}_1, \ldots , \hat{x}_6 \in \PP_x^2$.

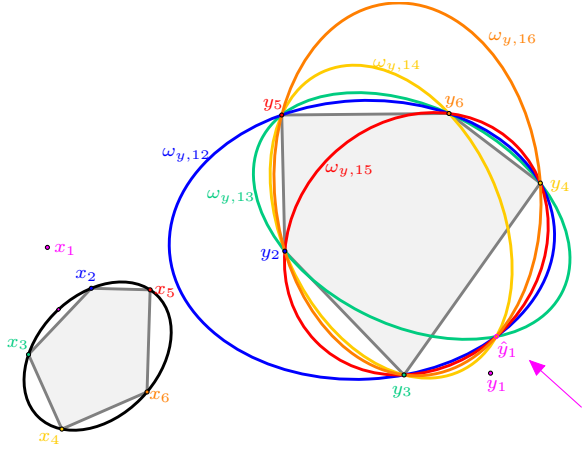
\begin{figure}[ht]
\begin{center}
	\begin{tikzpicture}[scale=0.7,
	line cap=round,line join=round,>=triangle 45,x=1.0cm,y=1.0cm]
\clip(-0.4, 0.2) rectangle (11., 8.7);
\fill[line width=1.1pt,color=GRAY,fill=GRAY,fill opacity=0.10000000149011612] 
	(2.3095922573070555,3.2293717316174972) -- (1.1921691758905404,3.2587776021810897) -- (0.,2.) -- 
	(0.6334576351822827,0.5828433808941588) -- (2.250780516179872,1.2885842744203828) -- cycle;
\fill[line width=1.1pt,color=GRAY,fill=GRAY,fill opacity=0.10000000149011612] 
	(7.984925276080415,6.581640975867061) -- (4.809091255212421,6.552235105303469) -- 
	(4.867902996339604,3.9645184957073134) -- (7.132155029736231,1.612048850619902) -- 
	(9.719871639332373,5.258376800505392) -- cycle;
\draw [line width=1.1pt,color=GRAY] (2.3095922573070555,3.2293717316174972)-- (1.1921691758905404,3.2587776021810897);
\draw [line width=1.1pt,color=GRAY] (1.1921691758905404,3.2587776021810897)-- (0.,2.);
\draw [line width=1.1pt,color=GRAY] (0.,2.)-- (0.6334576351822827,0.5828433808941588);
\draw [line width=1.1pt,color=GRAY] (0.6334576351822827,0.5828433808941588)-- (2.250780516179872,1.2885842744203828);
\draw [line width=1.1pt,color=GRAY] (2.250780516179872,1.2885842744203828)-- (2.3095922573070555,3.2293717316174972);
\draw [line width=1.1pt,color=GRAY] (7.984925276080415,6.581640975867061)-- (4.809091255212421,6.552235105303469);
\draw [line width=1.1pt,color=GRAY] (4.809091255212421,6.552235105303469)-- (4.867902996339604,3.9645184957073134);
\draw [line width=1.1pt,color=GRAY] (4.867902996339604,3.9645184957073134)-- (7.132155029736231,1.612048850619902);
\draw [line width=1.1pt,color=GRAY] (7.132155029736231,1.612048850619902)-- (9.719871639332373,5.258376800505392);
\draw [line width=1.1pt,color=GRAY] (9.719871639332373,5.258376800505392)-- (7.984925276080415,6.581640975867061);
\draw [rotate around={114.68902407968969:(6.886466828133069,4.525092469171441)},line width=1.1pt,color=YELLOW] 
	(6.886466828133069,4.525092469171441) ellipse (3.1294399224753064cm and 2.080058368627179cm);
\draw [rotate around={153.03600530288222:(7.276906417412576,4.633019789315503)},line width=1.1pt,color=GREEN] 
	(7.276906417412576,4.633019789315503) ellipse (3.204180981296815cm and 2.0642472009769848cm);
\draw [rotate around={-177.14549329486542:(6.3311158420769935,4.1786318150991955)},line width=1.1pt,color=BLUE] 
	(6.3311158420769935,4.1786318150991955) ellipse (3.661062188612483cm and 2.648316632417248cm);
\draw [rotate around={44.120321812824415:(7.3648447883247705,4.104244497381486)},line width=1.1pt,color=RED] 
	(7.3648447883247705,4.104244497381486) ellipse (2.6609727852289726cm and 2.3266218440921147cm);
\draw [rotate around={95.61954272787143:(7.197416952126105,5.139531741465944)},line width=1.1pt,color=ORANGE] 
	(7.197416952126105,5.139531741465944) ellipse (3.5514072743350056cm and 2.5236279525691354cm);
\draw [rotate around={-134.27589778940393:(1.3122202521785304,1.9698190312887291)},line width=1.1pt] 
	(1.3122202521785304,1.9698190312887291) ellipse (1.6147406180477664cm and 1.1537980351325563cm);
\begin{scriptsize}
\draw [fill=RED] (2.3095922573070555,3.2293717316174972) circle (1.1pt);
	\draw (2.6, 3.2) node {$\RX$};
\draw [fill=BLUE] (1.1921691758905404,3.2587776021810897) circle (1.1pt);
	\draw (1.1, 3.5) node {$\BX$};
\draw [fill=GREEN] (0.,2.) circle (1.1pt);
	\draw (-0.2, 2.2) node {$\GX$};
\draw [fill=YELLOW] (0.6334576351822827,0.5828433808941588) circle (1.1pt);
	\draw (0.4, 0.4) node {$\YX$};
\draw [fill=ORANGE] (2.250780516179872,1.2885842744203828) circle (1.1pt);
	\draw (2.5, 1.2) node {$\OX$};
\draw [fill=ORANGE] (7.984925276080415,6.581640975867061) circle (1.1pt);
	\draw (8.1, 6.8) node {$\OZ$};
\draw [fill=RED] (4.809091255212421,6.552235105303469) circle (1.1pt);
	\draw (4.65, 6.75) node {$\RZ $};
\draw [fill=BLUE] (4.867902996339604,3.9645184957073134) circle (1.1pt);
	\draw (4.6,3.9) node {$\BZ$};
\draw [fill=GREEN] (7.132155029736231,1.612048850619902) circle (1.1pt);
	\draw (7.1, 1.3) node {$\GZ$};	
\draw [fill=YELLOW] (9.719871639332373,5.258376800505392) circle (1.1pt);
	\draw (10.1, 5.2) node {$\YZ$};	

	\draw [color=PINK] (8.884919322745228,2.3416073525155885) circle (1.1pt);
		\draw (9.1, 2.1) node {$\PE$};
  \draw [->,color=PINK] (10.5, 1)--(9.5, 1.9);
	\draw [fill=PINK] (8.774397502378596,1.645740979998882) circle (1.1pt);
		\draw (8.9, 1.4) node {$\PZ$};		
	\draw [fill=PINK] (0.3586418322364089,4.035815590319276) circle (1.1pt);
		\draw (0.7, 4.0) node {$\PX$};
	\draw [fill=PINK] (0.5698247232086053,2.8568232200661283) ++(-1.1pt,0 pt) -- ++(1.1pt,1.1pt)--++(1.1pt,-1.1pt)--++(-1.1pt,-1.1pt)--++(-1.1pt,1.1pt);

 \draw [color=RED] (6.1,5.5) node {$\omega_{y,15}$};
 \draw [color=BLUE] (3,5.8) node {$\omega_{y,12}$};
 \draw [color=ORANGE] (9.2,8) node {$\omega_{y,16}$};
 \draw [color=GREEN] (3.85,5) node {$\omega_{y,13}$};
 \draw [color=YELLOW] (7,7.5) node {$\omega_{y,14}$};
\end{scriptsize}
\end{tikzpicture}
\caption{Constructing 5 conics from one, and the exceptional point $\hat{y}_1.$}
\label{fig:many-conics}
\end{center}
\end{figure}

As it turns out, the cubic curve $C_x$ (resp.~$C_y$) is uniquely determined by the requirement that it passes through the six points of $\X $ (resp.~$\Y$) and the six exceptional points $\hat{x}_1, \ldots, \hat{x}_6$ (resp.~$\hat{y}_1, \ldots , \hat{y}_6.$)
In fact, $\X$ together with any three of the exceptional points suffice to determine $C_x$ uniquely.

For $n=6$ in this section, and $n=7$ in the next, we consider the linear space of $3\times 3$ matrices consistent with the constraints imposed by $\X = \{ x_1, \ldots , x_n\}, \,   \Y = \{ y_1, \ldots , y_n \} \subset \PP^2$:
\begin{equation}\label{eq:LXY}
L_{\X , \Y } = \{ F \in \PP^8 \mid y_i^T F x_i =0, \, i=1, \ldots , n \} .
\end{equation}
When $n\le 9$ and $\X $ and $\Y$ are generic, we have that $\dim (L_{\X , \Y }) = 8 - n.$ 
When $n=6,$ we have $\dim (L_{\X , \Y }) =2,$ so $L_{\X , \Y}$ is spanned by three $3\times 3$ matrices $F_1, F_2, F_3$:
\begin{equation}\label{eq:LXY-6}
L_{\X , \Y } = \langle F_1, F_2, F_3 \rangle .
\end{equation}
With the parametrization~\eqref{eq:LXY-6}, the cubic curves may be characterized as follows:
\begin{align}
C_x &= \left\{ a \in \PP^2 \mid  \det \left[
\begin{array}{c|c|c}
F_1 a & F_2 a & F_3 a
\end{array}
\right] = 0 \right\}, \\
C_y &= \left\{ b\in \PP^2 \mid \det \left[
\begin{array}{c|c|c}
F_1^T b & F_2^T b & F_3^T b
\end{array}
\right] = 0\right\}.
\end{align}
Since $y_i^T F_j x_i=0$ for $i=1, \ldots , 6$, $j=1,\ldots , 3,$ this gives $\X \subset C_x$, $\Y \subset C_y.$
The fact that $C_x$ and $C_y$ contain the exceptional points follows from~\cite[Lemma 6.1]{connelly2024geometry}.

We conclude this section with 
\Cref{thm:main thm n=6}; its statement summarizes our results for the case $n=6$ in the language of the camera centers variety.

\begin{theorem} \label{thm:main thm n=6}
    The camera centers variety $\EPV{6}$ is a curve in $\PP^2 \times \PP^2$. 
    Furthermore,
    \begin{enumerate}
        \item Both of the coordinate projections $\pi_x : \EPV{6} \to \PP_x^2$, $\pi_y : \EPV{6} \to \PP_y^2$ project $\EPV{6}$ onto  nonsingular cubic plane curves $C_x \subset \PP_x^2, $ $C_y \subset \PP_y^2,$ with $\X \subset C_x,$ $\Y \subset C_y.$
        \item The multidegrees of $\EPV{6}$ are 
        $d_{(1,0)} \left( \EPV{6} \right) = 
d_{(0,1)} \left( \EPV{6}\right) = 3.$ 
     \end{enumerate}
\end{theorem}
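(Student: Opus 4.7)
The plan is to extract the statement directly from~\Cref{lemma:dim-En} and~\Cref{thm:n=6 invariant theory}, treating the cubic equations derived in the preceding paragraphs as the primary input, and then to read off the multidegrees using B\'ezout together with birationality of the projections.

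First, for the dimension claim, I would quote~\Cref{lemma:dim-En}: since $n=6 \le 7,$ we have $\dim \EPV{6} = 7 - n = 1,$ so $\EPV{6}$ is a curve. Next, I would show $\pi_x (\EPV{6}) = C_x$ (and symmetrically $\pi_y (\EPV{6}) = C_y$).~\Cref{thm:n=6 invariant theory} already gives the forward containment $\pi_x (\EPV{6}) \subseteq C_x$ via~\eqref{eq:mixed eqn 2}, so only surjectivity remains. For this I would use exactly the argument in the last paragraph of the proof of~\Cref{thm:n=6 invariant theory}: for any $a \in C_x,$ the pullback vector $(a_x(a), \ldots , f_x(a))$ lies on the Cremona hexahedral cubic surface $S_x \subset \PP^5$ by construction, and the additional mixed equation~\eqref{eq:mixed eqn 1} places it on $S_y$ as well; the birational identification $\PP_y^2 \dashrightarrow S_y$ then produces a unique $b \in C_y$ with $(a,b) \in \EPV{6}.$ The same argument, with the roles of $\X$ and $\Y$ reversed, shows $\pi_y$ is surjective onto $C_y$; moreover, the uniqueness of $b$ given $a$ (and vice versa) shows that $\pi_x$ and $\pi_y$ are both birational morphisms onto their images.

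For smoothness of $C_x$ (and $C_y$), I would rely on genericity together with a semicontinuity argument: the locus of $(\X , \Y )$ for which $C_x$ is singular is Zariski closed, so it suffices to exhibit one generic configuration where $C_x$ is nonsingular. Concretely, one can take $\X, \Y$ in standard position, specialize the remaining two points $x_5, x_6, y_5, y_6$ to random rational values, and verify by a direct Jacobian computation (or on a computer algebra system such as Macaulay2, as in~\cite{M2}) that the cubic $C_x$ cut out by the explicit determinantal equation in $F_1, F_2, F_3$ spanning $L_{\X,\Y}$ is smooth. Since smoothness of a plane cubic is an open condition on the coefficients, this propagates to a Zariski-open neighborhood, which we may absorb into the generic set $\mathcal{U}'$ from~\Cref{sec:epv}.

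Finally, for the multidegrees, I would argue as follows. Since $\dim \EPV{6} = 1,$ the only multidegrees to compute are $d_{(1,0)}$ and $d_{(0,1)}.$ Fix a generic line $\ell_x \subset \PP_x^2$ and consider $\EPV{6} \cap (\ell_x \times \PP_y^2).$ Since $\pi_x : \EPV{6} \to C_x$ is birational with $C_x$ a smooth plane cubic, this intersection is in bijection (up to the exceptional locus, which we can avoid by genericity of $\ell_x$) with $\ell_x \cap C_x,$ which by B\'ezout has cardinality $3.$ Thus $d_{(1,0)} (\EPV{6}) = 3,$ and the symmetric argument gives $d_{(0,1)} (\EPV{6}) = 3.$ The main obstacle in this plan is the smoothness step: producing a clean, configuration-independent proof that $C_x$ is a \emph{nonsingular} cubic for all generic $\X, \Y$ may require more than just citing semicontinuity, but the single-example-plus-openness route sketched above should suffice.
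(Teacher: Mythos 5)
Your proposal is correct and takes essentially the paper's own route: the paper presents this theorem as a summary of Section 6, with the dimension coming from \Cref{lemma:dim-En}, the containment in the cubics $C_x, C_y$ and the unique correspondence $a \leftrightarrow b$ coming from \Cref{thm:n=6 invariant theory} (exactly the hexahedral-surface argument you quote), and the multidegrees $d_{(1,0)}=d_{(0,1)}=3$ following from B\'ezout applied to the degree-$3$ curves together with that birational correspondence. The only point where you go beyond the paper is the nonsingularity of $C_x, C_y$, which the paper asserts without an explicit argument (relying on genericity and its determinantal description of the cubics via $L_{\X,\Y}=\langle F_1,F_2,F_3\rangle$ checked computationally); your semicontinuity-plus-verified-example argument is a legitimate way to fill that in.
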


We note additionally that the vanishing ideal $\mathcal{I} (\EPV{6})$ is generated by the equations of $C_x$ and $C_y,$ of respective bidegrees $(3,0)$ and $(0,3),$ and $3$ polynomials of bidegree $(1,1).$

\section{Camera loci when $n=7$}\label{sec:7-points}

Finally, we address the case where $\X $ and $\Y $ consist of $n=7$ generic points each.
We begin by describing a simple geometric construction that locates the three camera centers. 
Using the construction outlined in the previous section, we can construct two cubic curves corresponding to two 6-point subsets. In the left panel of~\Cref{fig:implicit-curve-7pts-2curves}, we see two cubic curves corresponding to $\{x_1,\dots,x_5\}$ and either $x_6$ (blue) or $x_7$ (red). These cubics intersect in nine complex points.
Six of these points are given by $x_1,\dots,x_5$ and the corresponding exceptional point $\hat{x}.$ 
The remaining three intersection points $a_1, a_2, a_3,$ are the possible cameras.
The right panel further illustrates that this construction does not rely on the choice of 2 cubics: all 7 pass through $a_1,a_2,a_3.$ 

\begin{figure}
    \centering  \includegraphics[width=0.45\linewidth]{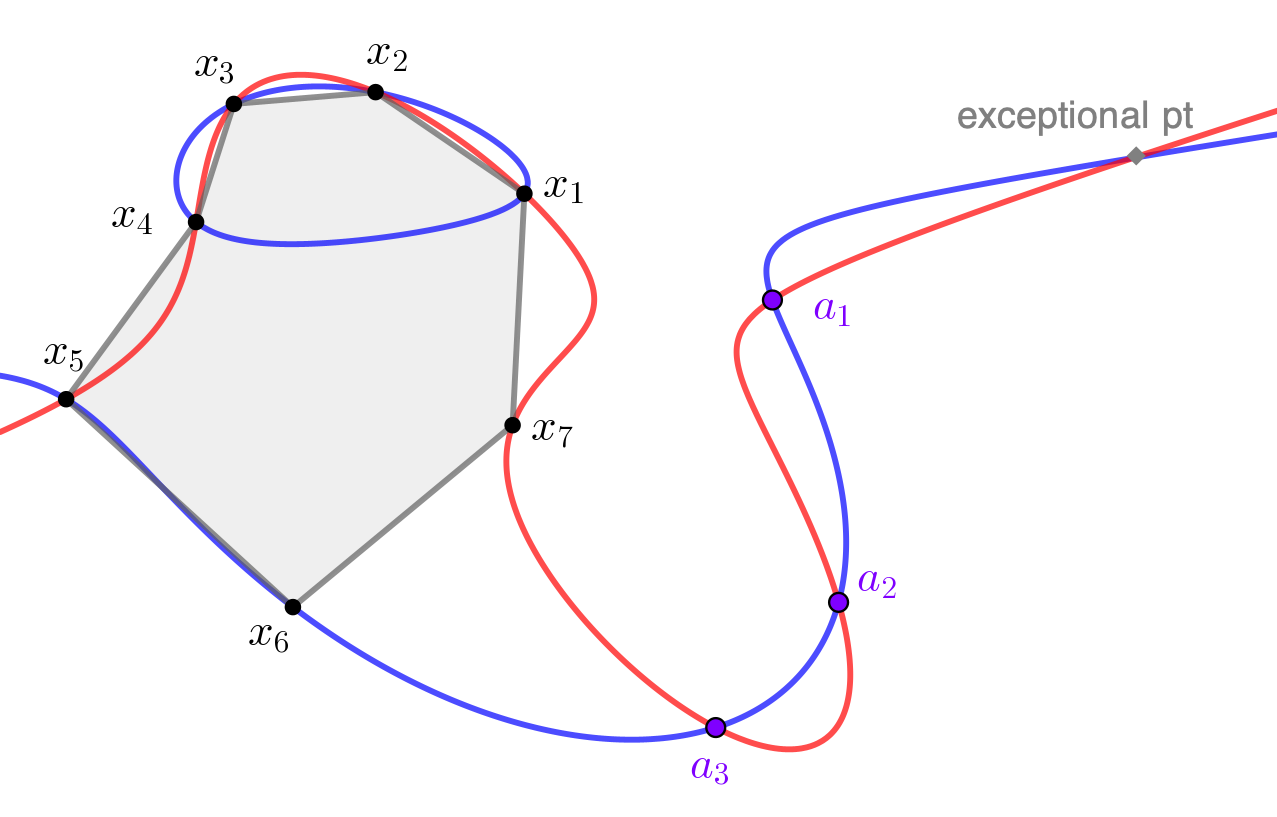}
    \quad
\includegraphics[width=0.45\linewidth]{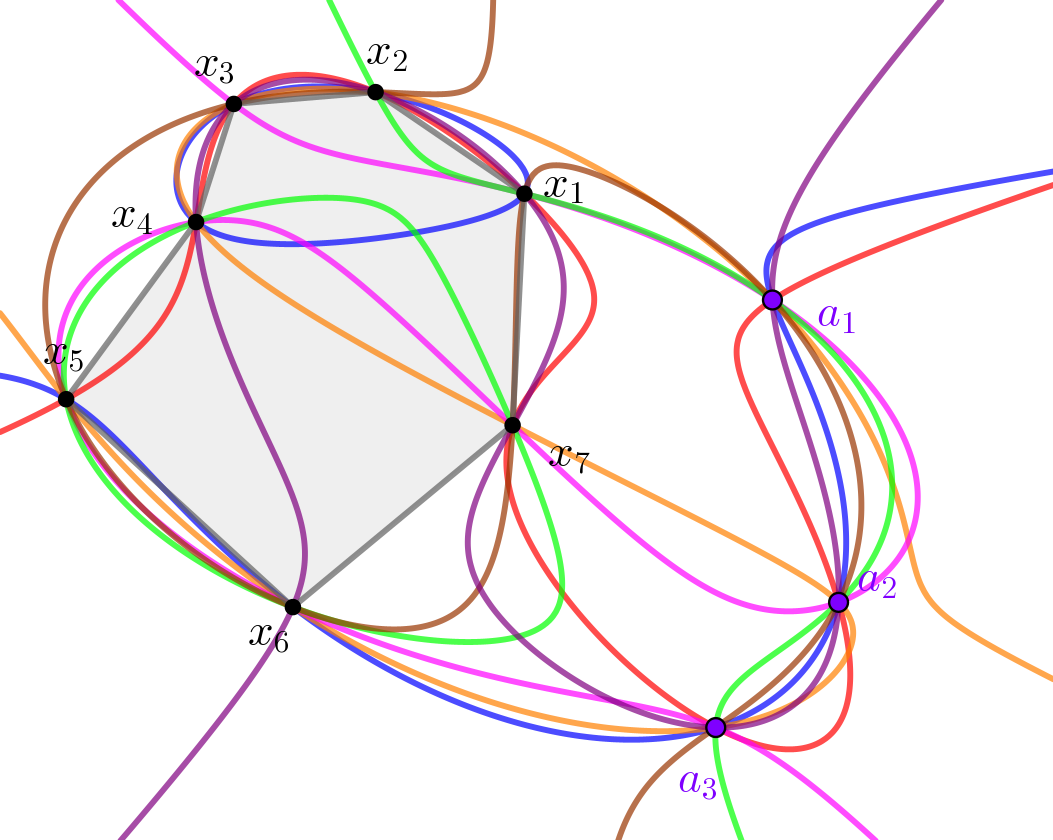}
    \caption{ Left: For a set of 7 labeled points $\mathcal{X} = \{ x_1, \ldots , x_7 \}$, we can use two cubics associated with  6-point subsets to locate the camera centers $a_1$, $a_2$, and $a_3$.  Right: All seven such cubics pass through the same three points.}
    \label{fig:implicit-curve-7pts-2curves}
\end{figure}

Despite the simplicity of this construction, the cubics it produces  are surprisingly \emph{not the simplest equations} vanishing on $\EPV{7}$, nor its projections into $\PP_x^2$ and $\PP_y^2.$
Indeed, computing the vanishing ideal $\mathcal{I} (\EPV{7})$, we find that it is generated by three types of equations: 3 of bidegree $(2,0)$, 3 of bidegree $(0,2)$, and 6 of bidegree $(1,1).$
We use the remainder of this section to explain the first two types of equations, which give \emph{conics} constraining the locations of the camera centers $a$ and $b$.

Let us consider again the linear subspace $L_{\X , \Y}$ defined in~\eqref{eq:LXY}. 
Since $n=7,$ we have $\dim (L_{\X , \Y}) = 1.$
Fix two distinct points $F_1, F_2 \in L_{\X , \Y }$ so that
\begin{equation}\label{eq:pencil-generators-F-7pt}
L_{\X , \Y } = \langle F_1, F_2 \rangle .
\end{equation}
Consider now some fundamental matrix $F\in L_{\X , \Y}$, so that 
\begin{equation}
F = s_1 F_1 + s_2 F_2 ,
\quad 
\det F = 0,
\end{equation}
for some $[s_1:s_2] \in \PP^1.$
Letting $a$ be the right epipole of $F$, we have
\begin{equation}\label{eq:sFA}
s_1 F_1 a + s_2 F_2 a = 0.
\end{equation}
Using the cross product, we obtain three quadratic equations from~\eqref{eq:sFA}:
\begin{equation}\label{eq:crossproduct-conics}
F_1 a \times F_2 a  = 0.
\end{equation}
Thus, $a$ must lie on the intersection
of the three plane conics in $a$ defined by~\eqref{eq:crossproduct-conics}.
We claim that this intersection is a set of three points. 
To see this, let us write
\begin{equation}\label{eq:F1-F2}
F_1 = \begin{bmatrix}
f_{11}^T \\
f_{12}^T \\
f_{13}^T
\end{bmatrix}
,
\quad
F_2 = \begin{bmatrix}
f_{21}^T\\
f_{22}^T \\
f_{23}^T
\end{bmatrix}
\end{equation}
so that~\eqref{eq:sFA} is equivalent to the rank constraint
\begin{align}
\rank \, M(a) \le 1, \text{ where} \label{eq:rank-constraint}\\
\label{eq:linear-form-mat}
M(a) = 
\begin{bmatrix}
F_1 a & F_2 a 
\end{bmatrix}
=
\begin{bmatrix}
a^T
\begin{bmatrix}
f_{11} & f_{21}    
\end{bmatrix} \\
a^T \begin{bmatrix}
f_{12} & f_{22}    
\end{bmatrix}\\
a^T\begin{bmatrix}
f_{13} & f_{23}    
\end{bmatrix}
\end{bmatrix}.
\end{align}
Since the data is generic, we may assume that each row of $M(a)$ is nonzero.
Each conic equation in~\eqref{eq:crossproduct-conics} is equivalent to requiring some pair of rows of $M(a)$ to be linearly dependent.
Thus, any pair of minors of $M(a)$ gives two conics intersecting in four points.
Among these points of intersection, we claim that exactly three lie on all three conics.
To see this, let us consider the pair of conics that involve the third row of $M(a).$
These conics intersect at $a$ if there exist scalars $\lambda_1, \lambda_2$ such that
\begin{equation}\label{eq:conic-rows}
a^T \begin{bmatrix}
f_{13} & f_{23}    
\end{bmatrix}
=
\lambda_1 a^T \begin{bmatrix}
f_{11} & f_{21}    
\end{bmatrix},
\quad 
a^T
\begin{bmatrix}
f_{13} & f_{23}    
\end{bmatrix} 
=
\lambda_2 a^T \begin{bmatrix}
f_{12} & f_{22}   
\end{bmatrix} .
\end{equation}
Among the four points where these conics intersect, there is the distinguished point $a = f_{13} \times f_{23}$, allowing us to take $\lambda_1=\lambda_2=0$ in~\eqref{eq:conic-rows}.
By the genericity of $\X $ and $\Y$, we may assume that this  point does \emph{not} lie on the conic expressing the linear dependence of the first two rows of $M(a).$
If we now let $a$ be any one of the other three points of intersection, then both $\lambda_1$ and $\lambda_2$ in~\eqref{eq:conic-rows} are nonzero, implying the first two rows of $M(a)$ are dependent.
Thus $a$ lies on all three conics.

Note that the same argument applies to the remaining conic pairs for $a$,
and an analogous triple of conics for $b.$
Moreover, from the $n=5$ case, we know that $a$ and $b$ are related by a degree $5$ Cremona transformation.
We summarize our discussion with the following theorem.
\begin{theorem}\label{thm:n-7}
When $n=7$, each of the camera centers $a,b \in \PP^2$ must lie on the intersection of three conics, $\omega_{a,i}$ and $\omega_{b,i}$ respectively, for $i=1,\ldots ,3$, which can be explicitly computed from the data $\X $ and $\Y.$
Moreover, each triple of conics intersects in a set of exactly three complex points,
\[
\# \displaystyle\bigcap_{i=1}^3 \omega_{a,i}
=
\# \displaystyle\bigcap_{i=1}^3 \omega_{b,i}
= 3.
\]
Once one camera center is fixed, the other is uniquely determined. Thus, the camera centers variety $\EPV{7}$ is a finite set of three points, $d_{(0,0)} (\EPV{7}) = 3,$ and there are at most $3$ choices for the pair $(a,b).$
\end{theorem}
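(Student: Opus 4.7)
The approach is to combine the one-dimensional pencil structure of $L_{\X,\Y}$ with a B\'ezout count on the three rank-drop conics introduced just before the statement.

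First, I would fix a basis $F_1, F_2$ for the pencil $L_{\X,\Y}$ (which has dimension $1$ when $n=7$) and record that a point $a\in\PP^2$ arises as the right epipole of some rank-$2$ fundamental matrix $F = s_1 F_1 + s_2 F_2 \in L_{\X,\Y}$ if and only if the $3\times 2$ matrix $M(a) = [F_1 a \mid F_2 a]$ satisfies $\rank M(a) \le 1$. The three components of $F_1 a \times F_2 a$ are the $2\times 2$ minors of $M(a)$, each a quadratic form in $a$ cutting out a plane conic $\omega_{a,i}$, and by construction $a$ is forced onto $\omega_{a,1}\cap\omega_{a,2}\cap\omega_{a,3}$. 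Replacing $F_k$ by $F_k^\top$ and repeating the argument yields three analogous conics $\omega_{b,i}$ through $b$. This gives the first sentence of the theorem.

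The crux is to show $\# \bigcap_{i=1}^3 \omega_{a,i} = 3$. Writing the rows of $F_k$ as in~\eqref{eq:F1-F2}, I would fix the two conics expressing that row $3$ of $M(a)$ is proportional to row $1$ and to row $2$, respectively. By B\'ezout these two conics meet in $4$ points. One of them is the distinguished point $a^* = f_{13}\times f_{23}$: it annihilates the third row of $M(a^*)$ and thus tautologically lies on both chosen conics, yet for generic $\X,\Y$ the first two rows of $M(a^*)$ are linearly independent, so $a^*$ fails the third conic equation $\omega_{a,3}$. At any of the other three intersection points, row $3$ is a nonzero multiple of both row $1$ and row $2$, which forces rows $1$ and $2$ to be proportional, placing the point on $\omega_{a,3}$ as well. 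This gives exactly $4-1=3$ points in the triple intersection, and a symmetric argument handles the $b$-side. The main technical obstacle is the genericity claim that $a^*$ does not lie on $\omega_{a,3}$; I would discharge this by exhibiting a single explicit example in standard position (using the homographies~\eqref{eq:Hx}--\eqref{eq:Hy}) and then invoking generic freeness as in~\Cref{sec:epv}.

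To conclude, uniqueness of $b$ given $a$ follows from the pencil structure: once $a$ is chosen among the three points of $\bigcap_i \omega_{a,i}$, there is a unique (up to scale) $F \in L_{\X,\Y}$ with $Fa = 0$, and $b$ is forced to be the generator of its left nullspace. The resulting assignment $a\mapsto b$ is a bijection between the three admissible values of $a$ and the three admissible values of $b$, so $\EPV{7}$ is a finite set of exactly three pairs. Combined with $\dim \EPV{7} = 0$ from~\Cref{lemma:dim-En}, this yields $d_{(0,0)}(\EPV{7}) = 3$ and the ``at most three choices for $(a,b)$'' statement.
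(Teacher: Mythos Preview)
Your proposal is correct and follows essentially the same route as the paper: the same pencil $L_{\X,\Y}=\langle F_1,F_2\rangle$, the same rank-drop matrix $M(a)$, the same three conics from the $2\times 2$ minors, and the same B\'ezout count singling out the distinguished point $f_{13}\times f_{23}$ as the one intersection not lying on the third conic. The only notable difference is in the final uniqueness step: the paper appeals to the degree-$5$ Cremona transformation from the $n=5$ analysis, whereas you argue more directly that a fixed $a$ determines a unique $F$ in the pencil (hence a unique left epipole $b$)---your version is self-contained and arguably cleaner.
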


As noted in~\cite[Remark 3]{connelly2024lines}, the $2 \times 2$ minors of the matrix $M(a)$ define a quadratic Cremona transformation $\PP^2 \dashrightarrow \PP^2$ sending $x_i \to y_i$ for $i=1,\ldots , 7.$
The three base points of this transformation are the three possible camera centers in $\pi_x (\EPV{7})=\cap\omega_{a,i}.$
Moreover, each conic $\omega_{a,i}$ is the pre-image of the line $\langle e_j,e_k\rangle\subset\PP^2_y$, where $j,k\ne i$, under this transformation.

When the data $\X $ and $\Y$ are defined by sufficiently generic \emph{rational numbers}, the three distinguished points $f_{1i} \times f_{2i}$, $i=1,\ldots 3$ that arise in the proof of~\Cref{thm:n-7} are also rational.
This in turn implies that the conics $\omega_{a,i}$ and $\omega_{b,i}$ appearing in~\Cref{thm:n-7} contain infinitely many rational points.
This can be proved by \emph{stereographic projection}: if $\ell $ is any rational line passing through the distinguished point that is not tangent to the conic, then its points of intersection with the conic are determined by the roots of a quadratic polynomial: since one of the roots corresponding to the distinguished point is rational, it follows that the other root (and hence the other point of intersection) must also be.
Thus, we may construct infinitely many rational points on the conics $\omega_{a,i}, \omega_{b,i}.$

\bibliographystyle{plain}
\bibliography{references}

\begin{thebibliography}{10}

\bibitem{beltrametti}
Mauro~C. Beltrametti, Ettore Carletti, Dionisio Gallarati, and Giacomo Monti~Bragadin.
\newblock {\em Lectures on Curves, Surfaces and Projective Varieties: A Classical view of Algebraic Geometry}.
\newblock European Mathematical Society, 2009.
\newblock Translated from the 2003 Italian original by Francis Sullivan.

\bibitem{coble}
Arthur Coble.
\newblock Point sets and allied {C}remona groups. {I}.
\newblock {\em Trans. Amer. Math. Soc.}, 16(2):155--198, 1915.

\bibitem{connelly2024geometry}
Erin Connelly, Sameer Agarwal, Alperen Ergur, and Rekha~R Thomas.
\newblock The geometry of rank drop in a class of face-splitting matrix products: Part {I}.
\newblock {\em Advances in Geometry}, 24(3):369--394, 2024.

\bibitem{connelly2024lines}
Erin Connelly, Rekha~R Thomas, and Cynthia Vinzant.
\newblock The geometry of rank drop in a class of face-splitting matrix products: Part {II}.
\newblock {\em Advances in Geometry}, 24(3):395--420, 2024.

\bibitem{coxeter1987}
Harold Scott~MacDonald Coxeter.
\newblock {\em Projective Geometry}.
\newblock Springer-Verlag, New York, NY, 1987.
\newblock Second edition.

\bibitem{DolgachevGIT}
Igor Dolgachev.
\newblock {\em Lectures on Invariant Theory}.
\newblock Cambridge University Press, 2003.

\bibitem{M2}
Daniel~R. Grayson and Michael~E. Stillman.
\newblock Macaulay2, a software system for research in algebraic geometry.
\newblock Available at \url{http://www2.macaulay2.com}.

\bibitem{hartley2003multiple}
Richard Hartley and Andrew Zisserman.
\newblock {\em Multiple view geometry in computer vision}.
\newblock Cambridge university press, 2003.

\bibitem{hartley2009reconstruction}
Richard~I Hartley and Frederik Schaffalitzky.
\newblock Reconstruction from projections using grassmann tensors.
\newblock {\em International journal of computer vision}, 83:274--293, 2009.

\bibitem{HMSVDuke2009}
Benjamin Howard, John Millson, Andrew Snowden, and Ravi Vakil.
\newblock The equations for the moduli space of {$n$} points on the line.
\newblock {\em Duke Math. J.}, 146(2):175--226, 2009.

\bibitem{DBLP:conf/cvpr/HrubyKDOPPL23}
Petr Hruby, Viktor Korotynskiy, Timothy Duff, Luke Oeding, Marc Pollefeys, Tom{\'{a}}s Pajdla, and Viktor Larsson.
\newblock Four-view geometry with unknown radial distortion.
\newblock In {\em {IEEE/CVF} Conference on Computer Vision and Pattern Recognition}, pages 8990--9000, 2023.

\bibitem{Kempe1894}
Alfred~B. Kempe.
\newblock On {R}egular {D}ifference {T}erms.
\newblock {\em Proc. Lond. Math. Soc.}, 25:343--359, 1893/94.

\bibitem{Kemper2011}
Gregor Kemper.
\newblock {\em A Course in Commutative Algebra}, chapter Fibers and Images of Morphisms Revisited, pages 137--150.
\newblock Springer, 2011.

\bibitem{semple-kneebone}
John~Greenless Semple and Geoffrey~Thomas Kneebone.
\newblock {\em Algebraic projective geometry}.
\newblock Oxford University Press, 1998.

\bibitem{1908sturm}
Rudolf Sturm.
\newblock {\em Die Lehre von den geometrischen Verwandschaften}, volume~1 of {\em B.G. Teubners Sammlung von Lehrb\"uchen auf dem Gebiete der mathematischen Wisenschaften}.
\newblock Druck und Verlag von B.G. Teubener, Leipzig un Berlin, 1908.

\bibitem{SturmfelsInvariant}
Bernd Sturmfels.
\newblock {\em Algorithms in invariant theory}.
\newblock Springer, second edition, 2008.

\bibitem{TravesWehlau2024}
Will Traves and David Wehlau.
\newblock Ten points on a cubic.
\newblock {\em The American Mathematical Monthly}, 131(2):112--130, 2024.

\bibitem{werner}
Tom{\'{a}}s Werner.
\newblock Constraint on five points in two images.
\newblock In {\em {IEEE} Conference on Computer Vision and Pattern Recognition}, pages 203--208, 2003.

\end{thebibliography}
\end{document}